\documentclass[10pt,reqno]{amsart}

\usepackage{amsmath,amssymb,amsfonts,amsthm,amsopn,mathtools}
\usepackage{mathrsfs,pifont}
\usepackage{bm}

\usepackage{graphicx}
\usepackage{epsfig}
\usepackage{epstopdf}
\usepackage{color}
\usepackage[dvipsnames]{xcolor}
\usepackage{cases}
\usepackage{booktabs,multirow}
\usepackage{fancybox}
\usepackage{version}

\usepackage{caption} 
\usepackage{subfig} 

\usepackage[ruled,vlined]{algorithm2e}
\usepackage{algorithmic}

\usepackage[marginparwidth=2.5cm, marginparsep=3mm]{geometry}

\usepackage{xr-hyper}
\usepackage{url,hyperref}

\numberwithin{equation}{section}

\theoremstyle{plain} 
\newtheorem{theorem}{Theorem}[section]     
\newtheorem{lemma}[theorem]{Lemma}         

\theoremstyle{definition} 

\theoremstyle{remark} 
\newtheorem{remark}[theorem]{Remark}

\newcommand{\curl}{{\rm curl\,}}

\definecolor{rot}{rgb}{0.000,0.000,0.000}
\definecolor{r1}{rgb}{1.000,0.000,0.000}
\allowdisplaybreaks[4]

\ifpdf
\DeclareGraphicsExtensions{.eps,.pdf,.png,.jpg}
\else
\DeclareGraphicsExtensions{.eps}
\fi

\begin{document}
\baselineskip 14pt
\bibliographystyle{plain}

\title[RCL method for elastic waves] {A Robust Helmholtz-Decomposition-Based Real Compressed Layer Method for Time-Harmonic Elastic Wave Scattering}
\author[
    L.-L. Wang,\;  L. Zhang
	]{Li-Lian Wang${}^{\dag}$ and\; Lu Zhang${}^{\ddag}$
		}
	\thanks{${}^{\dag}$Division of Mathematical Sciences, School of Physical and Mathematical Sciences, Nanyang Technological University, 637371 Singapore. The research of both  authors is partially supported by  Singapore MOE AcRF Tier 2 Grant: MOE-T2EP20224-0012. Email: lilian@ntu.edu.sg.\\	
	\indent ${}^{\ddag}$Division of Mathematical Sciences, School of Physical and Mathematical Sciences, Nanyang Technological University, 637371 Singapore. Email: luzhang@ntu.edu.sg.}

\keywords{Elastic waves, Helmholtz decomposition, real coordinate transformation, oscillation, substitution.} \subjclass[2000]{65N35, 65N12, 74B05}

\begin{abstract} Time-harmonic elastic wave  scattering involves both compressional (P-) and shear (S-) waves, which propagate with different wavenumbers and polarization characteristics. The naive construction of perfectly matched layer (PML)-type methods based on complex coordinate stretching may lack robustness, or even fail, particularly when the wavenumbers are highly contrasted. The recently developed real compressed layer (RCL) technique build upon real compression transformations  and explicit extraction of resulting oscillatory patterns for time-harmonic Helmholtz problems may not work, since  the oscillations cannot be explicitly extracted by a single change of variables. This paper intends to bridge this gap by developing  a robust RCL method for two-dimensional time-harmonic elastic wave scattering in unbounded domains with compactly supported inhomogeneities. A key observation is that, through the Helmholtz decomposition, the displacement field in the exterior homogeneous region decoupled into P-wave and S-wave and each has a distinctive separation of its oscillatory pattern and decaying behaviours in polar coordinates. We then apply the real compression coordinate transformation in the radial direction to each component. We further propose a coupled displacement–potential RCL formulation that seamlessly integrates the Helmholtz-decomposed wave components with the interior displacement field. We show that, under this framework, the essential oscillations in the layer can be effectively removed. We prove the well-posedness of the resulting coupled problem and establish the exponential convergence of the RCL solution to the original scattering solution in the truncated domain of interest. We discretize the RCL-system using high-order spectral element method and  demonstrate the effectiveness and robustness  of the proposed method through ample numerical results. 
\end{abstract}
 \maketitle

\section{Introduction}
\label{sec:1}

Time-harmonic elastic wave scattering in inhomogeneous media and unbounded domains arises in many applications, such as geophysical exploration, seismic imaging, nondestructive testing, and material characterization~\cite{A73,HLHCS20,R14}. Unlike acoustic scattering governed by a scalar Helmholtz equation, elastic scattering is modeled by the Navier equation for the displacement field and contains both compressional and shear waves with different wavenumbers and directions. The coupling of two waves, the inhomogeneity of the medium, and the radiation condition at infinity make the problem challenging in both analysis and computation. A key first step is to reduce the unbounded physical domain to a bounded computational domain where standard finite-domain solvers can be used. The truncated problem should be well posed and provide an accurate approximation to the original solution in the region of interest. This motivates the development of efficient and robust domain truncation techniques for elastic wave scattering problems. 

Over the past decades, a variety of techniques have been developed to treat unbounded domains for wave equations~\cite{G13}. One common approach is to truncate the domain by imposing absorbing boundary conditions (ABCs) on an artificial boundary~\cite{BT80,EM77,G91,H99,HL07}. Local ABCs are relatively easy to implement, but they may suffer from limited accuracy and spurious reflections. Alternatively, transparent boundary conditions, such as Dirichlet-to-Neumann maps~\cite{G00,GK96,GK98,KG89}, can represent the radiation condition exactly, but they are typically available only for special geometries and involve nonlocal series representations, leading to dense operators or expensive evaluations. Boundary integral equation methods~\cite{BXY17,BXY191,ZXY21,ZXY22} provide another powerful tool for exterior scattering problems: since the fundamental solution already satisfies the radiation condition, one only needs to reformulate the PDE as an integral equation on the boundary. However, this approach relies on the availability of fundamental solution and becomes less convenient for inhomogeneous media, variable coefficients and volumetric sources. Infinite element methods~\cite{DS06,H16,NW22} encode the outgoing behavior into specially designed basis functions, but their construction and implementation are often problem dependent.

Among various domain truncation techniques, the perfectly matched layer (PML), originally introduced by Bérenger~\cite{B94} for time-domain electromagnetic waves, is one of the most widely used methods due to its flexibility and compatibility with standard finite-element and spectral-element discretizations. The basic idea of PML is to surround the physical computational domain with an artificial layer of finite thickness, filled with specially designed lossy media, so that outgoing waves decay rapidly before reaching the outer boundary. In the frequency domain, the construction of PMLs can be interpreted as a complex coordinate stretching, or equivalently as the introduction of anisotropic absorbing media~\cite{CMH15,MDG14,RGB10}. Since its introduction, PML has been extensively developed for many wave propagation and scattering problems and in various geometrical and coordinate settings~\cite{M21,M22,MMM25}.

Assume that all inhomogeneities and scatterers are enclosed in the disk $B_R$. Observe that in the polar coordinates $(r,\theta)$, the far-field outgoing elastic wave can be decomposed into compressional and shear parts (see the related analysis in subsection \ref{sec:2.2} below),
\begin{equation}
\bm{u}(r, \theta) \sim r^{-1 / 2} e^{{\rm i} k_{\rm{p}} r} \bm{M}_{\rm{p}}(k_{\rm{p}} r, \theta)+r^{-1 / 2} e^{{\rm i} k_{\rm{s}} r} \bm{M}_{\rm{s}}(k_{\rm{s}} r, \theta),
\end{equation}
where $\bm M_{\rm t}$, $\rm t=p,s$, are well behaved. Under the complex stretching 
\begin{equation*}
r\to \tau=r+{\rm i} \int_R^r \sigma(t) d t, \quad R<r<R+d, 
\end{equation*}
each phase factor satisfies
\begin{equation*}
e^{{\rm i} k_{\rm t} \tau(r)}=\exp \Big(-k_{\rm t} \int_R^r \sigma(t) d t\Big) e^{{\rm i} k_{\rm t} r}, \quad t=\rm p,s,
\end{equation*}
where $\sigma(t)$ is called the absorbing function (ABF). One typical choice of the ABF is
\begin{equation}
\sigma(t)=\sigma_0(t-R)^n / d^n, \quad n=0,1, \ldots,
\end{equation}
where $\sigma_0>0$ is a tuning parameter. Then it can be seen from~\cite{BPT10,CXZ16} that the truncated PML solution converges exponentially to the original elastic scattering solution as the layer thickness $d$ or absorbing strength $\sigma_0$ increases. Specifically, for a given truncation accuracy threshold $\epsilon$, the parameters must satisfy
\begin{equation*}
e^{-k_{\rm t}\sigma_0d}\geq\epsilon\Rightarrow \sigma_0d\leq \tfrac{|\ln(\epsilon)|}{k_{\rm t}}.
\end{equation*}
From this estimate, the limitations of PML for elastic waves can be understood from three aspects. First, when $k_t \rightarrow 0$, the decay factor becomes weak, and achieving a given tolerance requires increasing either the layer thickness $d$ or the absorbing strength $\sigma_0$. The former increases the computational cost, while the latter may lead to sharp coefficient variations and discretization-induced reflections. Second, when the two elastic wavenumbers are highly separated, for instance in the case $\lambda \gg \mu$ where $k_{\rm{s}} \gg k_{\rm{p}}$, it is difficult for one set of PML parameters to absorb both P and S waves efficiently. The damping profile needed for the long P wave may be too steep relative to the short $S$ wave, which can generate numerical reflections. Third, when imaginary wavenumbers occur, the associated evanescent waves are not attenuated in the same way as propagating waves and may decay slowly in low-frequency or near-field regimes. Conventional PMLs may therefore fail to absorb these components accurately. 

Several improvements of PML have been developed to enhance its accuracy and robustness. PMLs with unbounded absorbing functions~\cite{BHPR071,BHPR07} have stronger attenuation and may reduce the need for parameter tuning, but they introduce singular coefficients into the transformed equations, which must be handled carefully in numerical discretization. The perfect absorbing layer (PAL) method~\cite{YWG21} provides a further improvement by combining unbounded ABFs mappings with a suitable variable substitution. Consequently, the PAL-equation is free of singular
coefficients and the substituted unknown field is essentially non-oscillatory in the layer. The numerical results show that, although PAL is exact in the continuous setting, achieving the desired accuracy still requires the empirical condition $kd\in[5,30]$. These motivates the search for layer techniques that are easier to implement and less sensitive to parameter choices.

Recently, a RCL method has been developed as a real-coordinate alternative for acoustic scattering problems~\cite{WWW25}. The main difficulty of a real coordinate transformation is that a direct compression of an unbounded domain into a bounded layer produces infinitely fast oscillations, which cannot be resolved by standard grid-based methods. The key idea of RCL is to avoid approximating these oscillations directly. Instead, the outgoing wave is compressed only in the radial direction, and the induced oscillatory phase is explicitly extracted by a suitable substitution. More precisely, with the real-valued exponential mapping $\tau(r)=R e^{\tau_0(r-R)}$, the algebraically decaying factor $r^{-1 / 2}$ is transformed into $\tau(r)^{-1 / 2}=R^{-1 / 2} e^{-\tau_0(r-R) / 2}$, and hence becomes exponentially decaying in the layer. In addition, unlike PML and PAL methods based on complex coordinate transformations, RCL is not merely artificial: the computed field in the layer can be transformed back to recover the outgoing wave in the original unbounded domain. In~\cite{z26}, a related null infinity layer method has also been proposed, which follows a similar idea of removing the far-field decay and oscillations before compactification to make real-coordinate mappings effective for wave scattering problems. However, the method relies on a suitable matching between compactification and phase scaling; for variable-coefficient or strongly coupled systems, additional assumptions and careful discretization near the compactified boundary are still required.

Extending the RCL method to elastic wave scattering is not straightforward. In the acoustic case, the field is governed by a single scalar Helmholtz equation and has one oscillatory phase. In contrast, the elastic field is described by a coupled vector system and contains both compressional and shear waves with different wavenumbers. Moreover, in an inhomogeneous medium, the compressional and shear modes do not satisfy globally decoupled Helmholtz equations in the displacement formulation, and hence cannot be separated by a fixed-wavenumber potential decomposition. To overcome this difficulty, we keep the original elastic wave equation in the inhomogeneous physical region and place the RCL in the exterior homogeneous medium. There, the displacement field can be decomposed by the Helmholtz decomposition into two scalar potentials corresponding to the compressional and shear waves. Then we formulate RCL equations for the compressional and shear potentials under the radial RCT, and introduce separate wavenumber-dependent substitutions to extract the corresponding oscillatory patterns. This leads to a coupled formulation consisting of the elastic wave equation in the inhomogeneous region and two RCL-transformed Helmholtz equations in the compressed layer. It is worth noting that, given a prescribed absorption threshold $\epsilon$, the RCL method achieves nearly wavenumber-independent absorption accuracy as long as the parameters satisfy $\tau_0d\le 2|\ln(\epsilon)|$. Hence, the RCL method is not only effective for high-frequency propagating waves but is also particularly suitable for low-frequency, imaginary-wave-number, and multi-wave-number regimes. This feature is especially relevant to more complicated coupled systems, such as thermoelastic waves, poroelastic waves, biharmonic problems, and time-domain formulations, where imaginary or complex wave numbers and evanescent modes naturally arise. The main contributions of this paper are summarized as follows.\smallskip
\begin{itemize}
	\item We derive the polar far-field representation of two-dimensional elastic outgoing waves and identify that two distinct wavenumber-dependent oscillatory patterns associated with the compressional and shear components. We then introduce the Holmholtz decomposition to effectively manipulate two types of waves which serves as  the basis for this  construction.  
	\smallskip
	
	\item We propose a coupled displacement-potential RCL method for inhomogeneous elastic scattering. The displacement equation is retained in the physical domain, while Helmholtz potential decomposition is introduced in the RCL, where wavenumber-matched changes of variables are used to separate the oscillatory factors induced by real compression. The physical field and the layer potentials are then coupled through the continuity of displacement and traction across the interface.
	
	\smallskip
	\item We prove the well-posedness of the RCL problem and the exponential convergence of the RCL solution as either the absorbing parameter or the thickness of the RCL increases. Then we derive a weak formulation, and develop a high-order spectral element discretization.
\end{itemize}
\smallskip

The rest of this paper is organized as follows. In section \ref{sec:2}, we demonstrate the essential idea for the circular RCL method. Section \ref{sec:3} derives the transformed RCL equations with a general star-shaped domain truncation and conduct the convergence analysis. Section \ref{sec:4} derives the weak formulation of the coupled displacement-potential system. A variety of numerical examples are presented in Section \ref{sec:5} to illustrate the performance of our method.

\section{Preliminaries}
\label{sec:2}

\subsection{Elastic problems}
\label{sec:2.1}

Let $\Omega\in\mathbb R^2$ be a bounded domain with a Lipschitz boundary $\Gamma$ and $\Omega^c=\mathbb{R}^2 \backslash \bar{\Omega} \subset \mathbb{R}^2$ be the exterior unbounded domain. Denote by $\bm\nu=\left(\nu_1, \nu_2\right)^{\top}$ and $\bm\tau=\left(\tau_1, \tau_2\right)^{\top}$ the unit normal and tangential vectors on $\Gamma$, respectively, where $\tau_1=-\nu_2$ and $\tau_2=\nu_1$. We consider the frequency-domain elastic problem:
\begin{equation}
\label{OP}
\begin{dcases}
\Delta^* \bm{u}(\bm x)+\rho\,\omega^2\theta(\bm x) \bm{u}(\bm x)=\bm f(\bm x), & \bm x\in \Omega^c,\\
\bm u(\bm x)=\bm g(\bm x), &\bm x\in\Gamma,
\end{dcases}
\end{equation}
where $\Delta^*$ is the operator defined by
\begin{equation*}
\Delta^*=\mu \Delta+(\lambda+\mu) \nabla \nabla\cdot.
\end{equation*}
Here, $\rho$ denotes the density of the homogeneous medium, while the medium's nonhomogeneity is characterized by the coefficient $\theta(\bm x)$, which satisfies $\theta(\bm x)>0$ for all $\bm x \in \Omega^c$. Let $\Omega_0$ be chosen sufficiently large such that $\Omega\in\Omega_0$ and both $\bm f$ and $(1-\theta)$ are compactly supported in $\Omega_0$. Furthermore, we define $\Omega_1=\Omega_0\backslash\bar\Omega$. Additionally, we denote the angular frequency by $\omega > 0$ and the Lamé constants by $\lambda$ and $\mu$ ($\lambda + \mu > 0$). 

To complete the formulation, the Kupradze radiation condition~\cite{KGBB79} is imposed, which involves decomposing the field $\bm u$ in $\Omega_1^c = \mathbb{R}^2 \setminus \bar{\Omega}_1$ as
\begin{equation}
\label{ups}
\bm{u}=\bm{u}_{\rm p}+\bm{u}_{\rm s}.
\end{equation}
Here, $\bm{u}_{\rm p}$ and $\bm{u}_{\rm s}$ denote the compressional and shear (P and S) waves, respectively,  given by 
\begin{equation*}
\bm{u}_{\rm p}=-\tfrac{1}{k_{\rm p}^2} \nabla \nabla\cdot \bm{u}, \quad \bm{u}_{\rm s}=\tfrac{1}{k_{\rm s}^2}  \bm \curl \curl \bm{u},
\end{equation*}
where 
\begin{equation*}
\bm \curl=(\tfrac{\partial}{\partial x_2},-\tfrac{\partial}{\partial x_1})^{\top}, \quad \curl \bm{v}=\tfrac{\partial v_2}{\partial x_1}-\tfrac{\partial v_1}{\partial x_2}, \quad \bm{v}=(v_1, v_2)^{\top} ,
\end{equation*} 
and the wave numbers $k_{\rm p}$ and $k_{\rm s}$ are defined as
\begin{equation*}
k_{\rm p}=\omega \sqrt{\tfrac{\rho}{\lambda+2 \mu}},\quad k_{\rm s}=\omega \sqrt{\tfrac{\rho}{\mu}}.
\end{equation*}
It is easy to verify that $\bm u_{\rm p}$ and $\bm u_{\rm s}$ satisfy the Helmholtz equations
\begin{equation}
\label{Dcop}
\Delta \bm{u}_{\rm p}+k_{\rm p}^2 \bm{u}_{\rm p}=0, \quad \Delta \bm{u}_{\rm s}+k_{\rm s}^2 \bm{u}_{\rm s}=0 \quad \text { in } \Omega_1^c.
\end{equation}
The Kupradze radiation condition~\cite{KGBB79} is given as
\begin{equation}
\label{SC}
\lim _{|\bm{x}| \rightarrow \infty}|\bm{x}|(\tfrac{\partial \bm{u}_{\rm p}}{\partial|\bm{x}|}-{\rm i} k_{\rm p} \bm{u}_{\rm p})=0, \quad \lim _{|\bm{x}| \rightarrow \infty}|\bm{x}|(\tfrac{\partial \bm{u}_{\rm s}}{\partial|\bm{x}|}-{\rm i} k_{\rm s} \bm{u}_{\rm s})=0 .
\end{equation}

Throughout the paper, for any Banach space $X$, we denote the boldfaced letter $\bm{X}=X^2$. Moreover, we assume $\bm g \in\bm H^{1 / 2}(\Gamma)$ to guarantee existence and uniqueness of the solution $\bm u$ of \eqref{OP} and \eqref{SC} in the Sobolev space $\bm H_{\rm {loc }}^1\left(\Omega^c\right)$. Then the existence and uniqueness of the time harmonic elastic wave equation under the Kupradze radiation condition are given by the following lemma~\cite{BPT10,LLW23}.

\begin{lemma}
	\label{lemma21}
	
	Assume that $\theta \in C^{m_1+1}(\Omega^c)$ with $m_1\geq 1$, and satisfies 
	\begin{equation*}
	\Re (\theta(\bm x))>0 ,\quad \Im (\theta(\bm x))\ge0, \quad\bm x \in \Omega^c,
	\end{equation*}
	and that $1-\theta$ has compact support in $\Omega^c$. Then, for any $\bm{f} \in\bm H^1(\Omega^c)'$ and
	$\bm{g} \in\bm H^{1 / 2}(\Gamma)$, the exterior elastic scattering problem together with the Kupradze radiation condition, admits a unique solution $\bm{u} \in\bm H_{\rm{loc}}^1\left(\Omega^c\right)$. Moreover, for any bounded open set $\mathcal{O} \subset \mathbb{R}^2$ satisfying $\bar{\Omega} \cup \operatorname{supp} \bm{f} \cup \operatorname{supp}(1-\theta) \subset \mathcal{O}$, there exists a constant $C>0$, independent of $\bm{f}$ and $\bm{g}$, such that
	\begin{equation*}
	\|\bm{u}\|_{H^1(\mathcal{O} \backslash \bar{\Omega})} \leq C\left(\|\bm{f}\|_{\bm H^1(\Omega^c)'}+\|\bm{g}\|_{\bm H^{1 / 2}(\Gamma)}\right).
	\end{equation*}
	
\end{lemma}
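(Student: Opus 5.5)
The plan is to reduce the exterior problem to an equivalent problem on a bounded domain by an exact elastic Dirichlet-to-Neumann (DtN) map, and then use the Fredholm alternative.

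\emph{Reduction to a bounded domain.} Choose a disk $B_R$ containing $\bar\Omega\cup\operatorname{supp}\bm f\cup\operatorname{supp}(1-\theta)$ and set $D=B_R\setminus\bar\Omega$. Outside $B_R$ the medium is homogeneous, so on $\partial B_R$ we can use the DtN map $\mathcal T$. It maps $\bm H^{1/2}(\partial B_R)\to\bm H^{-1/2}(\partial B_R)$ and takes Dirichlet data to the traction of the radiating solution. We build it from the Helmholtz decomposition $\bm u=\nabla\phi+\bm\curl\psi$, expanding $\phi,\psi$ in the Hankel series $H_n^{(1)}(k_{\rm p}r)e^{{\rm i}n\theta}$ and $H_n^{(1)}(k_{\rm s}r)e^{{\rm i}n\theta}$. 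Lifting $\bm g$ into $D$ then gives an equivalent variational problem: find $\bm u\in\bm H^1(D)$ with $\bm u=\bm g$ on $\Gamma$ such that
\begin{equation*}
a(\bm u,\bm v)=\int_D\big(\mu\nabla\bm u:\nabla\bar{\bm v}+(\lambda+\mu)\nabla\cdot\bm u\,\nabla\cdot\bar{\bm v}-\rho\omega^2\theta\,\bm u\cdot\bar{\bm v}\big)-\langle\mathcal T\bm u,\bm v\rangle_{\partial B_R}=-\langle\bm f,\bm v\rangle
\end{equation*}
for all $\bm v\in\bm H^1(D)$ with $\bm v=0$ on $\Gamma$. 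Equivalence with \eqref{OP}--\eqref{SC} follows because the series solution outside $B_R$ satisfies the Kupradze condition \eqref{SC}.

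\emph{G\aa rding inequality.} Next we show $a$ satisfies a G\aa rding inequality. Write $-\mathcal T=\mathcal T_1+\mathcal T_2$, where $\Re\langle\mathcal T_1\bm w,\bm w\rangle\ge0$ and $\mathcal T_2$ is compact (finite rank modulo lower order). This uses the asymptotics of the Fourier coefficients of $\mathcal T$ as $|n|\to\infty$, following the known analysis (e.g.\ Bao et al.). Korn's inequality together with $\lambda+\mu>0$ and $\mu>0$ controls the volume term. The term $\rho\omega^2\theta$ is a compact perturbation in $\bm L^2$. Hence the operator induced by $a$ is Fredholm of index zero on $\{\bm v\in\bm H^1(D):\bm v|_\Gamma=0\}$.

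\emph{Uniqueness (the main obstacle).} Take $\bm f=0$ and $\bm g=0$, and set $\bm v=\bm u$ in the variational identity. The sign $\Im\theta\ge0$ together with $\Im\langle\mathcal T\bm u,\bm u\rangle\ge0$ (positivity of the Wronskian-type terms $\Im\big(\bar H_n^{(1)}{H_n^{(1)}}'\big)>0$ for each mode) forces $\Im\langle\mathcal T\bm u,\bm u\rangle=0$. This makes every Fourier mode of $\phi$ and $\psi$ vanish, so $\bm u=0$ outside $B_R$. Equivalently, one can use Rellich's lemma applied separately to $\bm u_{\rm p}$ and $\bm u_{\rm s}$. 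Then $\bm u$ has zero Cauchy data on $\partial B_R$. By unique continuation for the Lamé system with $C^{m_1+1}$ coefficient $\theta$, $\bm u\equiv0$ in $D$; the regularity of $\theta$ is exactly what this step needs. The delicate part is handling the coupled P/S modes in $\mathcal T$: $\Im\langle\mathcal T\bm u,\bm u\rangle$ is a sum over modes of $2\times2$ quadratic forms. I would show each form has nonnegative imaginary part, vanishing only for zero coefficients, using the Wronskian identity mode by mode.

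\emph{Existence and stability.} Fredholm theory then gives existence, and bounded invertibility gives $\|\bm u\|_{\bm H^1(D)}\le C(\|\bm f\|+\|\bm g\|_{\bm H^{1/2}(\Gamma)})$. Extending $\bm u$ by the series outside $B_R$, the estimate on any bounded $\mathcal O$ follows from continuity of the Hankel-series extension from $\bm H^{1/2}(\partial B_R)$ into $\bm H^1$ of annuli. Finally, taking $B_R$ larger than $\mathcal O$ or containing it, the constant depends only on $\mathcal O$ and the data geometry.
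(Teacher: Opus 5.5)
The paper does not prove this lemma; it quotes it from \cite{BPT10,LLW23}. Your outline is the standard argument behind such results and is correct in outline: DtN reduction on $\partial B_R$, a G\aa rding inequality with the Fredholm alternative, and uniqueness from $\Im\theta\ge0$ plus the outgoing energy flux, followed by Rellich's lemma for $\bm u_{\rm p},\bm u_{\rm s}$ and unique continuation.

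One point needs fixing: the pairing in your variational form. The volume form $\mu\nabla\bm u:\nabla\bar{\bm v}+(\lambda+\mu)\nabla\cdot\bm u\,\nabla\cdot\bar{\bm v}$ has conormal derivative $\mu\partial_{\bm\nu}\bm u+(\lambda+\mu)(\nabla\cdot\bm u)\bm\nu$. This differs from the physical traction $\bm T(\partial,\bm\nu)\bm u$ by a tangential (G\"unter) derivative term. So if $\mathcal T$ maps Dirichlet data to the physical traction, your problem imposes the wrong transmission condition on $\partial B_R$. Either define $\mathcal T$ with that pseudo-traction, or use $2\mu\,\varepsilon(\bm u):\varepsilon(\bar{\bm v})+\lambda\,\nabla\cdot\bm u\,\nabla\cdot\bar{\bm v}$. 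The latter is where Korn's inequality is actually needed.
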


\subsection{Far-field expansion}
\label{sec:2.2}

In this subsection, we consider the two-dimensional elastic scattering problem by a circular obstacle, aiming to establish the expansion form of the solution. 

Let $B_a=\left\{{\bm{x}} \in \mathbb R^2:|{\bm{x}}|<a\right\}$ be a suitable disk that contains the scatterer $\Omega$ with the boundary $\Gamma=\partial B_a$ (see Figure \ref{GeometricSettings} (a)). Then we take $\Omega_1=B_a$. Assume that the field $\bm u$ on boundary $\Gamma_1$ is given as
\begin{equation}
\label{BouData}
\bm g_1(\theta)=g_{\rm r}(\theta)\bm e_{\rm r}+g_{\rm \theta}(\theta)\bm e_{\rm \theta},
\end{equation}
where $\bm e_r$ and $\bm e_\theta$ denote the polar basis vectors. Then the field $\bm u$ is also the solution of the system 
\begin{equation}
\label{ugamma1}
\begin{dcases}
\Delta^* \bm{u}(\bm x)+\rho\omega^2\bm{u}(\bm x)=\bm 0, & \bm x\in \Omega_1^c,\\
\bm u(\bm x)=\bm g_1(\bm x), &\bm x\in\Gamma_1.
\end{dcases}
\end{equation}
For any solution $\bm u$ in $\Omega_1^c$, we introduce the Helmholtz decomposition
\begin{equation}
\label{HD}
\bm u=\nabla \phi_p+\curl \phi_s, 
\end{equation}
where $\phi_p$, $\phi_s$ are scalar potential functions. Substituting \eqref{HD} into \eqref{ugamma1} yields that
\begin{equation}
\label{HDEqu}
\begin{dcases}
\Delta\phi_{\rm t}+k_{\rm t}^2 \phi_{\rm t}=0,& \text{ in } \; \Omega_1^c,\\
\partial_{\bm\nu}\phi_{\rm p}+\partial_{\bm\tau}\phi_{\rm s}=\bm\nu\cdot{\bm g}_1,& \text { on } \;  \Gamma_1,\\
\partial_{\bm\tau}\phi_{\rm p}-\partial_{\bm\nu}\phi_{\rm s}=\bm\tau\cdot{\bm g}_1,& \text { on } \; \Gamma_1,\\
\lim\limits_{r \rightarrow \infty} r^{1 / 2}\left(\partial_r \phi_{\rm t}-k_{\rm t} \phi_{\rm t}\right)=0.
\end{dcases}
\end{equation}
The following lemma establishes the equivalence between the boundary value problems \eqref{OP} and \eqref{HDEqu} (see \cite{LWWZ2016,LY22} for the proof).

\begin{lemma}
	\label{lemma22}
	Let $\bm u$ be the the solution of the boundary value problem \eqref{ugamma1}. Then $\phi_{\rm p}=-\frac{1}{k_{\rm p}^2}\nabla\cdot\bm u$ and $\phi_{\rm s}=\frac{1}{k_{\rm s}^2}\curl\bm u$ are the solutions of the coupled boundary value problem \eqref{HDEqu}. Conversely, if $\phi_p$, $\phi_s$ are the solutions of the boundary value problem \eqref{HDEqu}, $\bm u=\nabla \phi_p+\curl \phi_s$ is the solution of the boundary value problem \eqref{ugamma1}.
\end{lemma}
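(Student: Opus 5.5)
The proof has two directions. The forward direction verifies the decomposition directly. The converse reconstructs $\bm u$ from the potentials and checks the boundary data and the radiation condition.

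\emph{Forward direction.} Let $\bm u$ solve \eqref{ugamma1} and define $\phi_{\rm p}=-\frac{1}{k_{\rm p}^2}\nabla\cdot\bm u$ and $\phi_{\rm s}=\frac{1}{k_{\rm s}^2}\curl\bm u$.
\begin{itemize}
\item[(i)] Use the two-dimensional identity $\Delta\bm v=\nabla\nabla\cdot\bm v-\bm\curl\curl\bm v$. This rewrites $\Delta^*\bm u=(\lambda+2\mu)\nabla\nabla\cdot\bm u-\mu\,\bm\curl\curl\bm u$.
\item[(ii)] Apply $\nabla\cdot$ to the equation and use $\nabla\cdot\bm\curl=0$. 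This gives $(\lambda+2\mu)\Delta(\nabla\cdot\bm u)+\rho\omega^2\nabla\cdot\bm u=0$, i.e.\ $\Delta\phi_{\rm p}+k_{\rm p}^2\phi_{\rm p}=0$.
\item[(iii)] Apply $\curl$ to the equation and use $\curl\nabla=0$ and $\curl\bm\curl=-\Delta$. This gives $\Delta\phi_{\rm s}+k_{\rm s}^2\phi_{\rm s}=0$.
\item[(iv)] Dividing the equation by $\rho\omega^2$ and using the definitions of $k_{\rm p},k_{\rm s}$ yields $\bm u=\bm u_{\rm p}+\bm u_{\rm s}=\nabla\phi_{\rm p}+\bm\curl\phi_{\rm s}$ in $\Omega_1^c$. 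This is \eqref{HD}, and it identifies $\bm u_{\rm p}=\nabla\phi_{\rm p}$ and $\bm u_{\rm s}=\bm\curl\phi_{\rm s}$.
\item[(v)] For the boundary conditions, take normal and tangential components of $\nabla\phi_{\rm p}+\bm\curl\phi_{\rm s}$ on $\Gamma_1$. With $\tau=(-\nu_2,\nu_1)$ one has $\bm\nu\cdot\bm\curl\phi=\partial_{\bm\tau}\phi$ and $\bm\tau\cdot\bm\curl\phi=-\partial_{\bm\nu}\phi$, up to the sign convention. This gives exactly the two interface equations of \eqref{HDEqu}.
\item[(vi)] For the radiation condition, $\phi_{\rm p}$ is a scalar multiple of $\nabla\cdot\bm u_{\rm p}$. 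The Kupradze condition \eqref{SC} for $\bm u_{\rm p}$, a radiating Helmholtz field, implies that its derivatives are also radiating. The standard route is that radiating solutions have convergent Hankel expansions $\sum_n a_nH_n^{(1)}(kr)e^{{\rm i}n\theta}$, which can be differentiated termwise. Hence $\phi_{\rm p}$ satisfies the Sommerfeld condition with $k_{\rm p}$, and likewise $\phi_{\rm s}$ with $k_{\rm s}$. (The condition in \eqref{HDEqu} should read ${\rm i}k_{\rm t}$.)
\end{itemize}

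\emph{Converse.} Let $\phi_{\rm p},\phi_{\rm s}$ solve \eqref{HDEqu} and set $\bm u=\nabla\phi_{\rm p}+\bm\curl\phi_{\rm s}$.
\begin{itemize}
\item[(i)] Compute $\nabla\cdot\bm u=\Delta\phi_{\rm p}=-k_{\rm p}^2\phi_{\rm p}$ and $\curl\bm u=-\Delta\phi_{\rm s}=k_{\rm s}^2\phi_{\rm s}$.
\item[(ii)] Then $\Delta^*\bm u=-(\lambda+2\mu)k_{\rm p}^2\nabla\phi_{\rm p}-\mu k_{\rm s}^2\bm\curl\phi_{\rm s}=-\rho\omega^2\bm u$. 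So the Navier equation holds in $\Omega_1^c$.
\item[(iii)] The boundary data follow by reversing the component computation in step (v) above.
\item[(iv)] Since $\nabla\nabla\cdot\bm u=-k_{\rm p}^2\nabla\phi_{\rm p}$, the P-part is $\bm u_{\rm p}=\nabla\phi_{\rm p}$; similarly $\bm u_{\rm s}=\bm\curl\phi_{\rm s}$. Their Kupradze conditions follow from the Sommerfeld conditions on $\phi_{\rm t}$, again by termwise differentiation of the Hankel expansions.
\item[(v)] Uniqueness from Lemma~\ref{lemma21} then identifies $\bm u$ as the solution of \eqref{ugamma1}.
\end{itemize}

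The main obstacle is transferring the radiation conditions between $\bm u_{\rm t}$ and the gradients or rotations of $\phi_{\rm t}$. I would handle it via the Hankel series representation outside $B_a$, using the uniform asymptotics of $H_n^{(1)}$ and $H_n^{(1)\prime}$ to justify differentiating the condition. The algebraic identities and the sign conventions for $\bm\nu,\bm\tau$ are routine.
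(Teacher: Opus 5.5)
Your proposal is correct and is essentially the standard argument that the paper does not reproduce but defers to \cite{LWWZ2016,LY22}: you take the divergence and curl of the Navier equation via $\Delta=\nabla\nabla\cdot-\bm\curl\curl$, read off $\bm u=\nabla\phi_{\rm p}+\bm\curl\phi_{\rm s}$ from the equation itself, project onto $\bm\nu,\bm\tau$ on $\Gamma_1$, and pass the radiation conditions between $\phi_{\rm t}$ and $\bm u_{\rm t}$ using that derivatives of radiating Helmholtz solutions are radiating. You are also right that the Sommerfeld condition in \eqref{HDEqu} is missing the factor ${\rm i}$; for the radiation transfer, Green's representation formula is a slightly cleaner justification than termwise large-argument Hankel asymptotics, which are not uniform in $n$.
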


We can explicitly solve system \eqref{ugamma1} and \eqref{HDEqu}, and the result is given in the following theorem.
\begin{lemma}
	\label{lemma23}
	The solutions $\bm u$ and $\phi_t$ exists the following convergent series expansion: 
	\begin{align}
	\label{usR}
	\bm u(r, \theta)=&H_0^{(1)}(k_{\rm p} r) \sum_{l=0}^{\infty} \frac{\bm F_{{\rm p},l}(\theta)}{(k_{\rm p} r)^l}+H_1^{(1)}(k_{\rm p} r) \sum_{l=0}^{\infty} \frac{\bm G_{{\rm p},l}(\theta)}{(k_{\rm p} r)^l}\\
	\nonumber
	&+H_0^{(1)}(k_{\rm s} r) \sum_{l=0}^{\infty} \frac{\bm F_{{\rm s},l}(\theta)}{(k_{\rm s} r)^l}+H_1^{(1)}(k_{\rm s} r) \sum_{l=0}^{\infty} \frac{\bm G_{{\rm s},l}(\theta)}{(k_{\rm s} r)^l},\\
	\label{phipsR}
	\phi_{\rm t}(r, \theta)=&H_0^{(1)}(k_{\rm t} r) \sum_{l=0}^{\infty} \frac{h_{{\rm t},l}(\theta)}{(k_{\rm t} r)^l}+H_1^{(1)}(k_{\rm t} r) \sum_{l=0}^{\infty} \frac{q_{{\rm t},l}(\theta)}{(k_{\rm t} r)^l},\quad \rm t=p,s,
	\end{align}
	for $r>a$ and the series converges absolutely and uniformly in $r\geq a+\epsilon>a$. The series is infinitely differentiable term-by-term with respect to $r$ and $\theta$, with all resulting series converging absolutely and uniformly. Here, the coefficients $\{\bm F_{{\rm t},l},\bm G_{{\rm t},l},h_{{\rm t},l},q_{{\rm t},l}\}$ in $\theta$ can be determined recursively by the boundary data $\bm g_1$.
\end{lemma}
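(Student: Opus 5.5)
We follow the classical route of Karp's farfield expansion theorem for the scalar Helmholtz equation and lift it to the elastic setting through the Helmholtz decomposition. Lemma \ref{lemma22} tells us that for $r>a$ we have $\bm u=\nabla\phi_{\rm p}+\curl\phi_{\rm s}$, where each $\phi_{\rm t}$ solves $\Delta\phi_{\rm t}+k_{\rm t}^2\phi_{\rm t}=0$ in $\Omega_1^c$ and satisfies the Sommerfeld condition. We therefore first prove \eqref{phipsR} for each potential separately and then obtain \eqref{usR} by differentiation.

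\textbf{Step 1: separation of variables for $\phi_{\rm t}$.} We write $\phi_{\rm t}(a,\theta)=\sum_n c_{{\rm t},n}e^{{\rm i}n\theta}$. The Dirichlet traces are determined by $\bm g_1$ through the coupled boundary conditions in \eqref{HDEqu}. After Fourier expansion these conditions become a $2\times 2$ system for each mode $n$. Its determinant involves $H_n^{(1)}$, $H_n^{(1)\prime}$ at $k_{\rm p}a$ and $k_{\rm s}a$, and it is nonzero by the uniqueness in Lemma \ref{lemma21}. The radiation condition then gives
\[
\phi_{\rm t}(r,\theta)=\sum_{n}c_{{\rm t},n}\frac{H_n^{(1)}(k_{\rm t}r)}{H_n^{(1)}(k_{\rm t}a)}e^{{\rm i}n\theta},\qquad r\ge a .
\]

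\textbf{Step 2: Karp-type rearrangement.} The key tool is the Graf/recurrence identity expressing $H_n^{(1)}(z)$ as a finite combination of $H_0^{(1)}(z)$ and $H_1^{(1)}(z)$ with polynomial coefficients in $1/z$. This comes from iterating $H_{n+1}^{(1)}=\tfrac{2n}{z}H_n^{(1)}-H_{n-1}^{(1)}$, which yields
\[
H_n^{(1)}(z)=P_n(1/z)H_0^{(1)}(z)+Q_n(1/z)H_1^{(1)}(z),
\]
where $P_n,Q_n$ are Lommel-type polynomials. We substitute this into the series, exchange the order of summation and collect powers of $(k_{\rm t}r)^{-l}$. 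This defines
\[
h_{{\rm t},l}(\theta)=\sum_n c_{{\rm t},n}\frac{[P_n]_l}{H_n^{(1)}(k_{\rm t}a)}e^{{\rm i}n\theta},
\]
and similarly $q_{{\rm t},l}$ from the coefficients of $Q_n$. These coefficients are alternatively characterized recursively: plugging the ansatz into the Helmholtz equation in polar coordinates and using the Bessel ODEs for $H_0^{(1)},H_1^{(1)}$ gives two-term recursions of the form $2l\,q_{l}=l^2h_{l-1}+\partial_\theta^2h_{l-1}$ and a companion one, as in Karp's theorem. This recursive characterization supplies the statement that the coefficients are determined by $\bm g_1$.

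\textbf{Step 3: convergence, the main obstacle.} The exchange of summation must be justified by absolute convergence for $r\ge a+\epsilon$. We bound the coefficients of $P_n,Q_n$ (roughly $|[P_n]_l|\lesssim (2|n|)^l/l!\,$-type combinatorial bounds, more precisely via $|H_n^{(1)}(z)|$ growth like $(|n|-1)!\,(2/z)^{|n|}$) against the decay of $c_{{\rm t},n}/H_n^{(1)}(k_{\rm t}a)$. These estimates give a majorant of the form $\sum_n |c_n|(a/(a+\epsilon))^{|n|}\cdot$(polynomial in $n$), which is uniformly convergent. I expect this double-series bound to be the delicate part. I would follow Karp's original estimate, or the argument in \cite{LLW23}, which reduces it to the Neumann-type majorization $\sum_l|[P_n]_l|\rho^{-l}\le C|H_n^{(1)}(k\rho)|/|H_0^{(1)}(k\rho)|$. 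Term-by-term differentiability follows in the same way, since derivatives in $r$ and $\theta$ only produce polynomial factors in $n$ and $l$, which are absorbed by the geometric decay.

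\textbf{Step 4: expansion for $\bm u$.} We apply $\nabla$ and $\curl$ termwise to \eqref{phipsR}, which is legitimate by Step 3. In polar form, $\nabla=\bm e_r\partial_r+\bm e_\theta r^{-1}\partial_\theta$, and we use $H_0^{(1)\prime}=-H_1^{(1)}$ and $H_1^{(1)\prime}=H_0^{(1)}-H_1^{(1)}/z$. Every term then again has the form $H_{0,1}^{(1)}(k_{\rm t}r)$ times a power $(k_{\rm t}r)^{-l}$. The factor $1/r$ is converted to $k_{\rm t}/(k_{\rm t}r)$ within the same wavenumber, so no mixing between p and s occurs. Regrouping gives \eqref{usR} with vector coefficients $\bm F_{{\rm t},l},\bm G_{{\rm t},l}$, which are explicit linear combinations of $h_{{\rm t},l},q_{{\rm t},l},h_{{\rm t},l-1},q_{{\rm t},l-1}$ and their $\theta$-derivatives times $\bm e_r,\bm e_\theta$. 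Absolute and uniform convergence of these series, and of all their derivatives, is inherited from Step 3.
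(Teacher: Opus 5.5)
Your proposal is correct and follows essentially the same route as the paper's appendix proof. Both arguments use Fourier separation into outgoing modes $b_{{\rm t},n}H_n^{(1)}(k_{\rm t}r)e^{{\rm i}n\theta}$, a per-mode $2\times 2$ solve driven by $\bm g_1$, the Lommel-polynomial identity $H_n^{(1)}=P_n H_0^{(1)}+Q_n H_1^{(1)}$ with regrouping in powers of $(k_{\rm t}r)^{-1}$, and Karp's estimates for convergence and term-by-term differentiability, which the paper likewise simply cites.

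The only deviations are minor. You obtain the nonvanishing per-mode determinant from uniqueness (Lemma~\ref{lemma21}) rather than from the signs of the real and imaginary parts of $H_n^{(1)\prime}/H_n^{(1)}$. You also derive the expansion of $\bm u$ by differentiating the already-rearranged potential series; this puts the $1/r$ factors produced by $\nabla$ and $\curl$ explicitly into the power series, whereas in the paper they sit inside the $r$-dependent ``coefficients'' $\bm c_n^{\rm t}$.
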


\begin{proof}
	Details of the proof are given in Appendix \ref{proofoflemma}.
\end{proof}

As is known~\cite{OLBC10}, the two Hankel functions have the following representations
\begin{equation}
\label{HankelEx}
\begin{aligned}
& H_0^{(1)}(z)=(\tfrac{2}{\pi z})^{1 / 2} e^{{\rm i}(z-\pi / 4)}\big(1-\tfrac{1}{8 z} {\rm i}+\tfrac{3}{128 z^2} {\rm i}^2+\cdots\big), \\[4pt]
& H_1^{(1)}(z)=(\tfrac{2}{\pi z})^{1 / 2} e^{{\rm i}(z-3 \pi / 4)}\big(1+\tfrac{3}{8 z} {\rm i}+\tfrac{3}{128 z^2} {\rm i}^2+\cdots\big),
\end{aligned}
\end{equation}
for $-\pi+\delta \leq \arg z \leq 2 \pi-\delta$ with some small $\delta>0$. Substituting \eqref{HankelEx} into \eqref{usR} and \eqref{phipsR} yields
\begin{align}
\label{usR2}
&\bm u(r, \theta)=\sqrt{\tfrac{2}{\pi k_{\rm p} r}} e^{{\rm i} k_{\rm p} r} \bm M_{\rm p}(k_{\rm p} r, \theta)+\sqrt{\tfrac{2}{\pi k_{\rm s} r}} e^{{\rm i} k_{\rm s} r} \bm M_{\rm s}(k_{\rm s} r, \theta),\\
\label{psR2}
&\phi_{\rm t}(r, \theta)=\sqrt{\tfrac{2}{\pi k_{\rm t} r}} e^{{\rm i} k_{\rm t} r} L_{\rm t}(k_{\rm t} r, \theta),\quad \rm t=p,s,
\end{align}
for $r>a$, where
\begin{equation*}
\begin{aligned}
\bm M_{\rm t}(k_{\rm t}r, \theta)& = e^{-\frac{\pi}{4} {\rm i}}\{\bm F_{{\rm t},0}(\theta)+\tfrac{1}{k_{\rm t}r}[\bm F_{{\rm t},1}(\theta)-\tfrac{{\rm i}}{8} \bm F_{{\rm t},0}(\theta)]+\tfrac{1}{k_{\rm t}^2r^2}[\bm F_{{\rm t},2}(\theta)-\tfrac{{\rm i}}{8} \bm F_{{\rm t},1}(\theta) \\
& \quad +\tfrac{3 {\rm i}}{128} \bm F_{{\rm t},0}(\theta)]+\mathcal{O}(k_{\rm t}^{-3}r^{-3})\}+e^{-\tfrac{3 \pi}{4} {\rm i}}\{\bm G_{{\rm t},0}(\theta)+\tfrac{1}{k_{\rm t}r}[\bm G_{{\rm t},1}(\theta)+\tfrac{3 {\rm i}}{8} \bm G_{{\rm t},0}(\theta)] \\[6pt]
& \quad +\tfrac{1}{k_{\rm t}^2r^2}(\bm G_{{\rm t},2}(\theta)+\tfrac{3 {\rm i}}{8} \bm G_{{\rm t},1}(\theta)+\tfrac{3 {\rm i}}{128} \bm G_{{\rm t},0}(\theta))+\mathcal{O}(k_{\rm t}^{-3}r^{-3})\},\quad \rm t=p,s,\\[6pt]
L_{\rm t}(k_{\rm t}r,\theta)&=e^{-\tfrac{\pi}{4} {\rm i}}\{h_{{\rm t},0}(\theta)+\tfrac{1}{k_{\rm t}r}[h_{{\rm t},1}(\theta)-\tfrac{{\rm i}}{8} h_{{\rm t},0}(\theta)]+\tfrac{1}{k_{\rm t}^2r^2}[h_{{\rm t},2}(\theta)-\tfrac{{\rm i}}{8} h_{{\rm t},1}(\theta) \\
& \quad +\tfrac{3 {\rm i}}{128} h_{{\rm t},0}(\theta)]+\mathcal{O}(k_{\rm t}^{-3}r^{-3})\}+e^{-\tfrac{3 \pi}{4} {\rm i}}\{q_{{\rm t},0}(\theta)+\tfrac{1}{k_{\rm t}r}(q_{{\rm t},1}(\theta)+\tfrac{3 {\rm i}}{8} q_{{\rm t},0}(\theta)) \\
& \quad +\tfrac{1}{k_{\rm t}^2r^2}(q_{{\rm t},2}(\theta)+\tfrac{3 {\rm i}}{8} q_{{\rm t},1}(\theta)+\tfrac{3 {\rm i}}{128} q_{{\rm t},0}(\theta))+\mathcal{O}(k_{\rm t}^{-3}r^{-3})\},\quad \rm t=p,s.
\end{aligned}
\end{equation*}

\noindent\underline{\bf Some observations:}
\smallskip

For the elastic wave equation, the outgoing field cannot be represented by a single scalar phase, since it contains both P and S wave components. In the far field, however, the same separation idea still applies mode by mode. More precisely,
\begin{itemize}
	\item[(a)] the P and S wave components decay slowly at the rates $1/\sqrt{k_{\rm p} r}$ and $1/\sqrt{k_{\rm s} r}$, respectively;
	\item[(b)] they exhibit distinct oscillatory patterns $e^{{\rm i} k_{\rm p} r}$ and $e^{{\rm i} k_{\rm s} r}$, respectively;
	\item [(c)] the functions $\bm M_{\rm t}$ and $L_{\rm t}$ essentially have no oscillation.
\end{itemize}
\smallskip
To this end, an effective RCL must absorb both P and S waves while managing their independent oscillatory behaviors. By decoupling the displacement field via Helmholtz decomposition, one can extract the the oscillatory factor of compressional and shear components. This forms the basis for our RCL method designed to handle the elastic wave scattering problems.

\begin{remark}
	The circular geometry is used only for simplicity of presentation. For a general bounded scatterer, one can choose a star-shaped curve $r=r_0(\theta)$ enclosing the scatterer and the inhomogeneous region. For each fixed $\theta$, the exterior field is considered on the ray $r>r_0(\theta)$, where the medium is homogeneous and the field $\bm u$ satisfies the same Navier equation and radiation condition. Therefore, the Helmholtz decomposition and the corresponding series expansion remain valid along each such ray. The circular case corresponds to the special choice $r_0(\theta) \equiv a$. 
\end{remark}

\subsection{Essence of RCL}
\label{sec:2.3}

\begin{figure}[htb]
	\centering
	\begin{tabular}{c@{\hspace{2cm}}c}
		\includegraphics[scale=0.12]{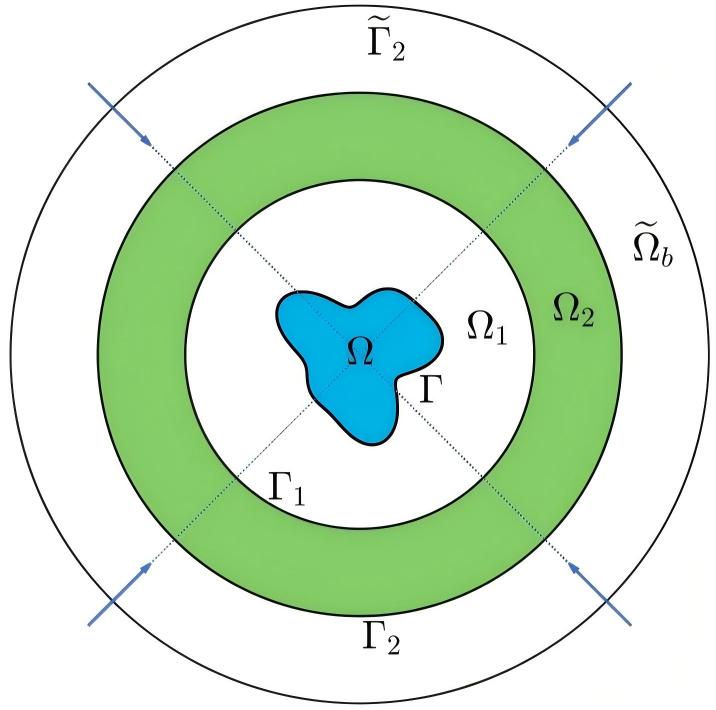} &
		\includegraphics[scale=0.15]{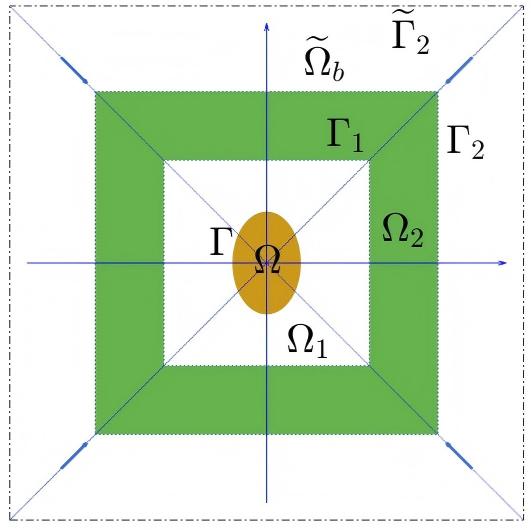} \\
		(a) Circular RCL  &(b) Rectangular RCL
	\end{tabular}
	\caption{Schematic illustration of the artificial layers.}
	\label{GeometricSettings}
\end{figure}

We first introduce a real exponential transform~\cite{WWW25}:
\begin{equation}
\label{RCL}
S=\tau(r,\theta):= \begin{dcases}r & \text { if } r \leq a(\theta), \\ a(\theta) e^{\tau_0(r-a(\theta))} & \text { if } r>a(\theta),\end{dcases}
\end{equation}
where $\tau_0>0$ is a tuning parameter. As shown in Figure \ref{GeometricSettings}, $a$ is constant for a circular $\Gamma_1$, whereas for other shapes, $a$ depends on $\theta$, thereby allowing flexible domain truncation for obstacles of arbitrary geometry.

In view of \eqref{psR2}, the stretched field $\tilde\phi_{\rm t}(r,\theta)=\phi_{\rm t}(\tau,\theta)$ for $r>a$ can be given by
\begin{equation}
\label{phiv}
\tilde\phi_{\rm t}(r,\theta)=\sqrt{\tfrac{2}{k_{\rm t} \pi a}} e^{-\frac{\tau_0}{2}(r-a)} \exp ({\rm i} k_{\rm t} a e^{\tau_0(r-a)})\,L_{\rm t}(k_{\rm t} \tau(r), \theta)=\mathcal{O}(e^{-\frac{\tau_0}{2}(r-a)}),
\end{equation}
which decays exponentially in $r$, and 
\begin{equation*}
L_{\rm t}(k_{\rm t} \tau, \theta) \sim\left\{h_{{\rm t},0}(\theta)-q_{{\rm t},0}(\theta)+{\rm i}\left(h_{{\rm t},0}(\theta)+q_{{\rm t},0}(\theta)\right)\right\} / \sqrt{2}, \quad k_{\rm t}\tau \gg 1 .
\end{equation*}
Whenever $ae^{\tau_0(r-a)} > r$, the oscillations within the neighborhood $r \in (a, a + \delta)$ (for some $\delta > 0$) may increase, which stems from the fact that the exponential mapping compresses the wave's phase. Nevertheless, the high oscillations of $\phi_{\rm p}$ and $\phi_{\rm s}$ can be effectively extracted as
\begin{equation}
\tilde\phi_{\rm t}(r,\theta)=e^{{\rm i}k_{\rm t}(\tau(r)-a)} \tilde v_{\rm t}(r,\theta),
\end{equation}
where $\tilde v_{\rm t}$ decays exponentially without essential oscillations. As shown in \eqref{WF1} of section \ref{sec:4}, based on the Helmholtz decomposition, an equivalent non-oscillatory variational problem can be derived.

As an illustrative example, we consider the scattering of a point source located at the origin by a circular scatterer $\Omega$ with radius $r=2$. The exact solution is given by
\begin{equation*}
\bm u=\nabla H_0^{(1)}(k_{\rm p}r)+\curl H_0^{(1)}(k_{\rm s}r).
\end{equation*}
Figures \ref{Osel} (a) and (b) illustrate the profiles of $u_1(r,\theta)$, $\tilde u_1(r,\theta)=u_1(\tau,\theta)$ for $a=4$, $\tau_0=20$, $\omega=25$, $\lambda=1$, $\mu=2$, $\rho=1$ and $\theta = 0$. It can be seen that $u_1$ and $\tilde u_1$ are identical within the annulus $\Omega_1=\{2< r< a\}$, however, $u_1$ decays slowly while $\tilde u_1$ exhibits rapid decay in $r$ for $r>a$. Notably, $\tilde u_1$ suffers from severe oscillations at $r=a$. To eliminate these oscillations, we first introduce the Helmholtz decomposition and then extract the high-frequency factor. Figures \ref{Osel} (c) and (d) show that, despite the high oscillations of $\tilde\phi_{\rm t}$ near $r=a$, $\tilde v_t$ remains essentially oscillation-free.

\begin{figure}[htb]
	\centering
	\begin{tabular}{cc}
		\includegraphics[scale=0.15]{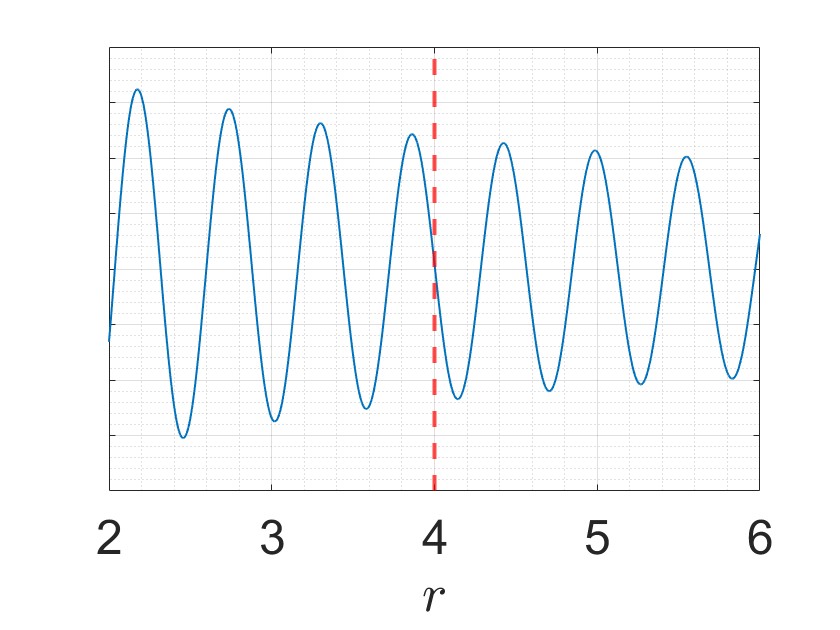} &
		\includegraphics[scale=0.15]{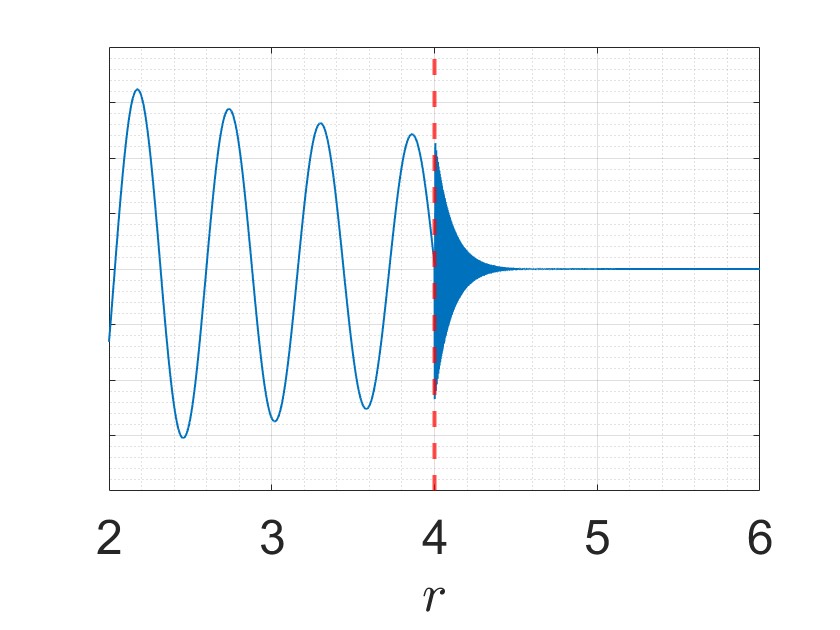} \\
		(a) $\Re(u_1(r,0))$   &(b) $\Re(\tilde u_1(r,0))$\\  
		\includegraphics[scale=0.15]{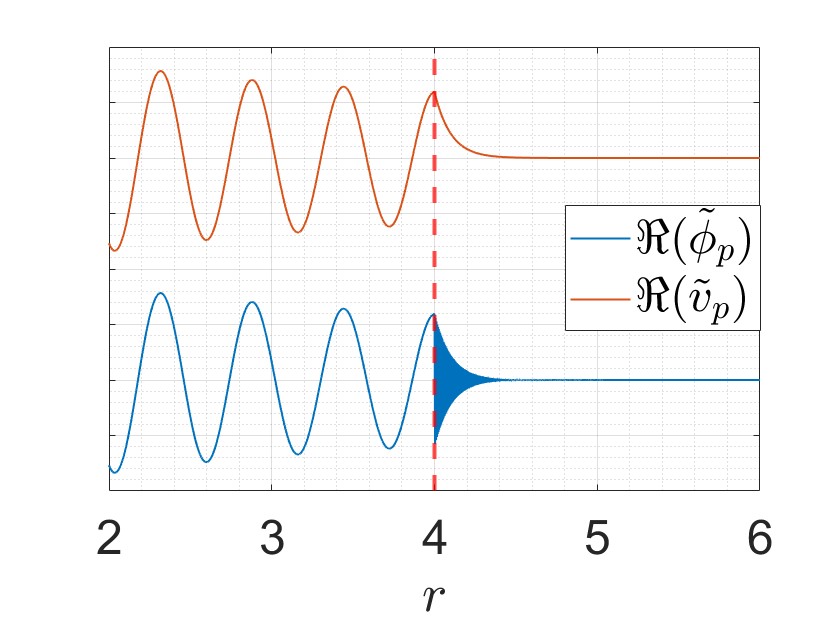} &
		\includegraphics[scale=0.15]{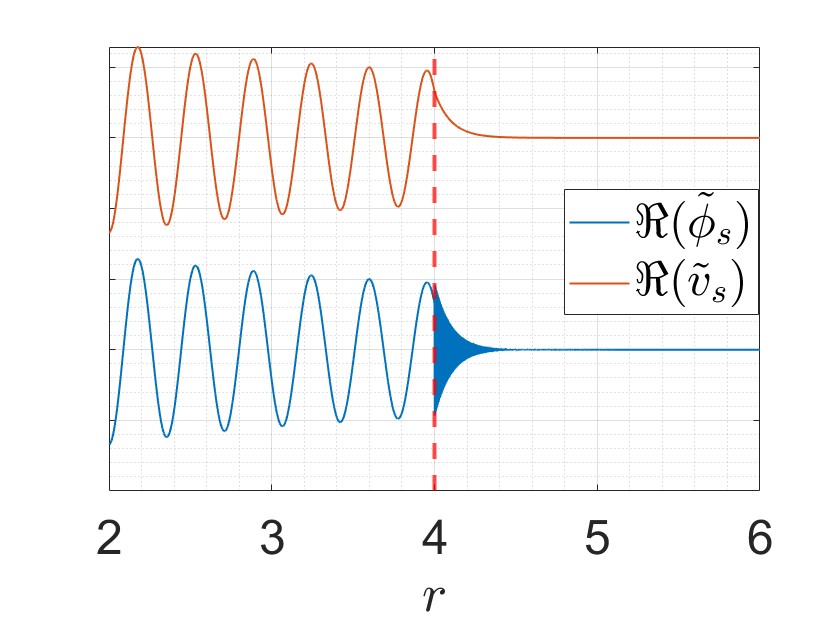} \\
		(c) $\Re\{\tilde\phi_p(r,0),\tilde v_p(r,0)\}$  &(d) $\Re\{\tilde\phi_s(r,0),\tilde v_s(r,0)\}$\\
	\end{tabular}
	\caption{Profiles of the real part of $u_1(r,\theta)$, $\tilde u_1(r,\theta)$, $\tilde\phi_{\rm t}(r,\theta)$ and $v_t(r,\theta)$ with $\theta=0$.}
	\label{Osel}
\end{figure}

\section{Proof of well-posedness and convergence of RCL method}
\label{sec:3}

In this section, we begin by formulating the RCL equations using the RCT. Subsequently, a rigorous analysis regarding the well-posedness and convergence of the proposed method is provided.

\subsection{The real coordinate transformation} 
\label{sec:3.1}

Rewrite the real stretching as
\begin{equation*}
\tilde{\bm x}=(\tilde x_1,\tilde x_2)^\top=(\tau\cos(\theta),\tau\sin(\theta))^\top=\big(\tfrac{\tau x_1}{r},\tfrac{\tau x_2}{r}\big)^\top. 
\end{equation*}
One verifies readily that
\begin{equation}
\label{Hna}
\nabla=\begin{pmatrix}
\partial_{x_1}\\
\partial_{x_2}
\end{pmatrix}=\mathbb R_\theta\widehat{\nabla},
\end{equation}
where $\widehat{\nabla}=(\partial_r,\tfrac{\partial\theta}{r})$ and 
\begin{equation*}
\mathbb R_\theta=\begin{bmatrix}
\cos(\theta)&-\sin(\theta)\\[4pt]
\sin(\theta)&\cos(\theta)
\end{bmatrix}.
\end{equation*}
It is easy to see that the Jacobian matrix is given by 
\begin{equation*}
\mathbb J=\begin{bmatrix}
\tfrac{\partial\tilde x_1}{\partial x_1}&\tfrac{\partial\tilde x_1}{\partial x_2}\\[4pt]
\frac{\partial\tilde x_2}{\partial x_1}&\frac{\partial\tilde x_2}{\partial x_2}
\end{bmatrix}=\mathbb R_\theta \begin{bmatrix}
\tau_r&\tfrac{\tau_\theta}{r}\\[4pt]
0&\tfrac{\tau}{r}
\end{bmatrix}\mathbb R_\theta^\top,\quad \tau_r=\frac{\partial\tau}{\partial r},\quad \tau_\theta=\frac{\partial\tau}{\partial \theta}.
\end{equation*}
Then we have
\begin{equation*}
J=|\mathbb J|=\frac{\tau\tau_r}{r},\quad \mathbb J^{-1}=\mathbb R_\theta\begin{bmatrix}
\frac{1}{\tau_r}&-\frac{\tau_\theta}{\tau_r\tau}\\[4pt]
0&\frac{r}{\tau}
\end{bmatrix}\mathbb R_\theta^\top.
\end{equation*}
We introduce the stretched gradient, divergence, $\curl$ and Laplace operators
\begin{equation*}
\begin{aligned}
&\widetilde{\nabla} v:=\mathbb{J}^{-\top} \nabla v, \quad \widetilde{\nabla} \cdot \bm{u}:=J^{-1} \nabla\cdot( J \mathbb{J}^{-1} \tilde{\bm{u}}), \\
&\widetilde{\Delta} v:=J^{-1} \nabla\cdot(\mathbb{A} \nabla v),\quad \widetilde{\curl}\bm u=J^{-1}\curl(\mathbb P\mathbb J^{\top}\bm u),\quad\widetilde{\bm\curl}v=J^{-1}\mathbb J\bm\curl v,
\end{aligned}
\end{equation*}
where 
\begin{equation*}
\mathbb P= \begin{bmatrix}
0&1\\[4pt]
1&0
\end{bmatrix},\quad\mathbb{A}=J \mathbb{J}^{-1}\mathbb{J}^{-\top}. 
\end{equation*}

Using the coordinate transformation \eqref{RCL}, we can transform the Lam\'e system~\eqref{OP}  into the following form:
\begin{equation}
\label{PMLeq1}
\mu\nabla\cdot(\mathbb A\nabla \tilde{\bm u}(\bm x))+(\lambda+\mu)J\mathbb{J}^{-\top} \nabla(J^{-1} \nabla\cdot( J \mathbb{J}^{-1}\tilde{\bm{u}}))+\rho\omega^2 J\tilde{\bm u}(\bm x)=\bm 0,\quad\bm x\in\Omega_1^c.
\end{equation}
Due to the exponential decay of the solutions $\tilde{\bm u}$ in the RCL region, the infinite domain can be truncated into a bounded domain. As shown in Figure \ref{GeometricSettings}, we define the following subdomains: 
\begin{equation*}
\Omega_2=\left\{\bm x\in\Omega^c,a(\theta)<|r|<b(\theta)\right\},\quad \Omega_b=\Omega_1\cup\Omega_2,\quad\Gamma_2=\left\{|r|=b(\theta)\right\}.
\end{equation*}
Thus, we can impose the homogeneous Dirichlet boundary condition at $\Gamma_b$. With the help of the above notations, we can obtain the RCL problem
\begin{equation}
\label{PMLeq2}
\begin{dcases}
\Delta^* \hat{\bm u}+\rho\omega^2\theta \hat{\bm u}=\bm f, &\text{ in } \Omega_1,\\
\mu\nabla\cdot(\mathbb A\nabla \hat{\bm u})+(\lambda+\mu)J\mathbb{J}^{-\top} \nabla(J^{-1} \nabla\cdot( J \mathbb{J}^{-1} \bm{u}))+\rho\omega^2 J\hat{\bm u}=\bm 0, &\text{ in } \Omega_2,\\
\bm{\hat u}=\bm g,& \text { on } \Gamma,\\
\bm{\hat u}=\bm 0,& \text { on } \Gamma_2.
\end{dcases}
\end{equation}
In addition, we need apply the standard transmission conditions on $\Gamma_1$, namely the continuity of the displacement and traction traces across the interface $\Gamma_1$. In $\Omega_2$, we can also decompose the field $\hat{\bm u}$ as
\begin{equation}
\label{HDEQ}
\hat{\bm u}=\widetilde\nabla\hat\phi_p+\widetilde{\bm\curl}\hat\phi_s.
\end{equation}
Substituting \eqref{HDEQ} into \eqref{PMLeq2} and using the usual transmission conditions at $\Gamma_1$ yields a system equivalent to \eqref{PMLeq2}, as follows:
\begin{equation}
\label{OP2}
\begin{dcases}
\Delta^* \hat{\bm{u}}(\bm x)+\rho\omega^2\theta(\bm x)  \hat{\bm{u}}(\bm x)=\bm f(\bm x), &\text{ in } \Omega_1,\\
\nabla \cdot(\mathbb A(\bm x) \nabla \hat\phi_{\rm t}(\bm x))+k_{\rm t}^2 J\hat\phi_{\rm t}(\bm x)=0, & \text { in } \Omega_2, \\ 
\hat{\bm u}(\bm x)=\bm g(\bm x), &\text { on }\Gamma,\\
-\lambda k_{\rm p}^2\hat\phi_{\rm p}\bm\nu-\mu k_{\rm s}^2\hat\phi_{\rm s}\bm\tau+2\mu\partial_{\bm\nu}\nabla \hat\phi_{\rm p}+2\mu\partial_{\bm\nu}\curl\hat\phi_{\rm s}=\bm T(\partial,\bm\nu)\hat{\bm u},& \text { on } \Gamma_1,\\
\partial_{\bm\nu}\hat\phi_{\rm p}+\partial_{\bm\tau}\hat\phi_{\rm s}=\bm\nu\cdot\hat{\bm u},& \text { on } \Gamma_1,\\
\partial_{\bm\tau}\hat\phi_{\rm p}-\partial_{\bm\nu}\hat\phi_{\rm s}=\bm\tau\cdot\hat{\bm u},& \text { on } \Gamma_1,\\
\tilde\partial_{\bm\nu}\hat\phi_{\rm p}+\tilde\partial_{\bm\tau}\hat\phi_{\rm s}=0,& \text { on } \Gamma_2,\\
\tilde\partial_{\bm\tau}\hat\phi_{\rm p}-\tilde\partial_{\bm\nu}\hat\phi_{\rm s}=0,& \text { on } \Gamma_2,
\end{dcases}
\end{equation}
where $\tilde\partial_{\bm \nu}={\bm \nu}^\top\mathbb A\nabla$, $\tilde\partial_{\bm \tau}={\bm \tau}^\top\mathbb A\nabla$ and $\bm T$ denotes the traction operator given by 
\begin{equation*}
\bm T(\partial,\bm\nu)\hat{\bm u}=\lambda\bm\nu(\nabla \cdot\hat{\bm u})+2\mu \partial_{\bm\nu}\hat{\bm u}-\mu \bm\tau \curl\hat{\bm u}.
\end{equation*}

Note that we consider the RCL transformation in both displacement \eqref{PMLeq2} and Helmholtz-potential formulations \eqref{OP2}. In the analysis, we use the displacement formulation \eqref{PMLeq2}, which preserves the structure of the original elastic system and is suitable for proving convergence. In the computation, we adopt a coupled displacement-potential formulation \eqref{OP2}. This allows the P and S waves to be separated and their oscillatory phases to be extracted independently.

\subsection{Convergence analysis}
\label{sec:3.2}
In this subsection, we establish the well-posedness of the RCL problem and prove that the $\bm H^1$-error between the solution $\hat{\bm u}$ of \eqref{PMLeq2} and the original scattering solution $\tilde{\bm u}$ of \eqref{PMLeq1} decays exponentially as either the RCL absorbing coefficient $\tau_0$ or the thickness of the RCL increases.

Define the domain $\widetilde\Omega_b=\tau(\Omega_b)$. Then we transform $\tilde{\bm u}$ back to the $(\tilde x_1,\tilde x_2)$ coordinates by \eqref{RCL} as $\widetilde{\bm U}(\tau, \theta):=\tilde{\bm u}(r, \theta)$, leading to 
\begin{equation}
\label{PMLOeq}
\begin{dcases}
\Delta_{\tilde{\bm x}}^*\widetilde{\bm U}(\tilde{\bm x})+\rho\omega^2\theta(\tilde{\bm x})\widetilde{\bm U}(\tilde{\bm x})=\bm f, \quad & \tilde{\bm x}\in \widetilde\Omega_b, \\
\widetilde{\bm U}(\tilde{\bm x})=\bm g(\tilde{\bm x}), \quad & \tilde{\bm x}\in \Gamma, \\
\text{Kupradze radiation conditions} \quad & \text {as } \tau \rightarrow \infty.
\end{dcases}
\end{equation}
Assume that $\tilde{\bm u}|_{\Gamma_1}=\bm \xi$. Then $\widetilde{\bm U}$ is also the solution of the exterior problem:
\begin{equation}
\label{DtNEq}
\begin{dcases}
\Delta_{\tilde{\bm x}}^*\widetilde{\bm U}(\tilde{\bm x})+\rho\omega^2\widetilde{\bm U}(\tilde{\bm x})=\bm 0, \quad & \tilde{\bm x}\in \Omega_1^c, \\
\widetilde{\bm U}(\tilde{\bm x})=\bm\xi(\tilde{\bm x}), \quad  & \bm x\in \Gamma_1, \\
\text{Kupradze radiation conditions} \quad 
& \text{as } \tau\rightarrow \infty.
\end{dcases}
\end{equation}
According to~\cite{KGBB79,LWWZ2016,S08}, we have 
\begin{equation}
\label{DtNError}
\|\bm\lambda\|_{\bm H^{-1/2}(\Gamma_1)} \leq C\|\bm{\xi}\|_{\bm H^{1/2}(\Gamma_1)}.
\end{equation}

As is known~\cite{KGBB79}, the boundary value problem \eqref{DtNEq} has a unique solution $\widetilde{\bm U} \in \bm H_{loc}^1(\Omega_1^c)$. According to the potential theory, the solution $\widetilde{\bm U}$ admits the representation
\begin{equation}
\label{SDLR}
\widetilde{\bm U}(\tilde{\bm x})=\mathcal D(\bm{\xi})(\tilde{\bm x})-\mathcal S(\bm\lambda)(\tilde{\bm x}),\quad\tilde{\bm x}\in\Omega_1^c,
\end{equation}
where $\mathcal S$ and $\mathcal D$ denote the single and double layer potentials, respectively, given by
\begin{align}
\label{SPL}
& \mathcal S(\bm \lambda)(\tilde{\bm x})=\int_{\Gamma_1} \mathbb G(\tilde{\bm x}, \tilde{\bm{y}})\bm{\lambda}(\tilde{\bm{y}}) d s_{\bm y},\quad\tilde{\bm x}\in\Omega_1^c,  \\
\label{DPL}
& \mathcal D(\bm{\xi})(\tilde{\bm x})=\int_{\Gamma_1}[T(\partial_{\tilde{\bm y}},\bm{\nu_{\tilde y}}) \mathbb G(\tilde{\bm x}, \tilde{\bm{y}})]^\top \bm\xi(\tilde{\bm{y}}) d s_{\bm y},\quad\tilde{\bm x}\in\Omega_1^c.
\end{align}
Here, $\mathbb G$ is the fundamental solution of the Navier equation \eqref{DtNEq} in $\mathbb R^2$ given by 
\begin{equation}
\label{FS}
\mathbb G(\tilde{\bm x}, \tilde{\bm y})=\tfrac{1}{\mu} \gamma_{k_{\rm s}}(\tilde{\bm x}, \tilde{\bm y})\mathbb I+\tfrac{1}{\rho \omega^2} \nabla_{\bm{\tilde x}} \nabla_{\bm{\tilde x}}^{\top}[\gamma_{k_{\rm s}}(\tilde{\bm x}, \tilde{\bm y})-\gamma_{k_{\rm p}}(\tilde{\bm x}, \tilde{\bm y})], \quad \tilde{\bm x} \neq\tilde{\bm y},
\end{equation}
where $\mathbb I$ denotes the $2 \times 2$ identity matrix, and $\gamma_{k_{\rm t}}$ is the fundamental solution of the Helmholtz equation in $\mathbb{R}^2$ with wave number $k_{\rm t}$, given by
\begin{equation*}
\gamma_{k_{\rm t}}(\tilde{\bm x},\tilde{\bm y})=\tfrac{\rm i}{4}H_0^{(1)}( k_{\rm t}|\tilde{\bm x}-\tilde{\bm y}|), \quad {\rm t}={\rm p, s} .
\end{equation*}
The estimates of the Green's function given below will play an essential role in the subsequent analysis.

\begin{lemma}
	\label{FSE}
	We introduce $\tilde{\bm r}=\tilde{\bm x}-\tilde{\bm y}$ and $\tilde r=|\tilde{\bm r}|$. For any $\tilde{\bm x} \in \Omega_1^c$, $\tilde{\bm y} \in \Gamma_1$ with $k_{\rm t}\tilde r > 1$, we have
	\begin{align}
	\label{GErr}
	&|\gamma_{k_{\rm t}}(\tilde{\bm x}, \tilde{\bm y})| \leq C k_{\rm t}^{-1 / 2}\tilde r^{-1 / 2},\\
	\label{GyErr}
	& \Big|\tfrac{\partial \gamma_{k_{\rm t}}(\tilde{\bm x}, \tilde{\bm y})}{\partial \tilde{y}_j}\Big| \leq C k_{\rm t}\tilde r^{-1 / 2}, \\
	\label{GxyErr}
	& \Big|\tfrac{\partial^2 \gamma_{k_{\rm t}}(\tilde{\bm x}, \tilde{\bm y})}{\partial \tilde{x}_i \partial \tilde{y}_j}\Big| \leq Ck_{\rm t}^{3/2}\tilde r^{-1 / 2},\\
	\label{GxxyErr}
	&\Big|\tfrac{\partial^3 \gamma_{k_{\rm t}}(\tilde{\bm x}, \tilde{\bm y})}{\partial \tilde{x}_l\partial \tilde{x}_i \partial \tilde{y}_j}\Big|\leq C k_{\rm t}^{5/2}\tilde r^{-1/2},\\
	\label{GxxxyErr}
	&\Big|\tfrac{\partial^4 \gamma_{k_{\rm t}}(\tilde{\bm x}, \tilde{\bm y})}{\partial \tilde{x}_m \partial \tilde{x}_l \partial \tilde{x}_i \partial \tilde{y}_j}\Big|\leq C k_{\rm t}^{7/2}\tilde r^{-1/2}
	\end{align}
	for $i, j,l,m,n=1,2$, where $C$ is a positive constant independent of $\tilde{\bm x}, \tilde{\bm y}$, and $k_{\rm t}$. 
\end{lemma}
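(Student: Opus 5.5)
The bounds follow from the large-argument behaviour of the Hankel functions $H_0^{(1)}$ and $H_1^{(1)}$ combined with the chain rule. We write $\gamma_{k_{\rm t}}(\tilde{\bm x},\tilde{\bm y})=\tfrac{\rm i}{4}H_0^{(1)}(k_{\rm t}\tilde r)$, so every derivative is a derivative of a radial function of $\tilde{\bm r}=\tilde{\bm x}-\tilde{\bm y}$. Since $\partial_{\tilde y_j}=-\partial_{\tilde x_j}$ on such functions, every mixed derivative in \eqref{GyErr}--\eqref{GxxxyErr} is, up to sign, an ordinary derivative of order $n=1,\dots,4$ of $F(\tilde{\bm r}):=H_0^{(1)}(k_{\rm t}|\tilde{\bm r}|)$.

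The first step is the one-variable estimate
\[
|(H_0^{(1)})^{(m)}(z)|\le C_m z^{-1/2}, \qquad z\ge 1,\ m=0,\dots,4 .
\]
This holds because each derivative is a finite linear combination of $H_0^{(1)}$ and $H_1^{(1)}$ with coefficients that are polynomials in $1/z$. For instance $(H_0^{(1)})'=-H_1^{(1)}$, and $(H_1^{(1)})'=H_0^{(1)}-z^{-1}H_1^{(1)}$. On $[1,\infty)$ both Hankel functions satisfy $|H_\nu^{(1)}(z)|\le C z^{-1/2}$: the asymptotic expansions \eqref{HankelEx} give this for large $z$, and continuity on compact intervals covers the rest. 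The same fact follows from the monotonicity of $|H_\nu^{(1)}(z)|^2 z$ (Nicholson's formula). The constants are uniform because only the scaled variable $z=k_{\rm t}\tilde r\ge1$ enters.

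The second step is the chain rule through the radial variable. By induction, an $n$-th order partial derivative of $F$ is a finite sum of terms of the form
\[
k_{\rm t}^{m}\,(H_0^{(1)})^{(m)}(k_{\rm t}\tilde r)\, P(\hat{\bm r})\,\tilde r^{-(n-m)}, \qquad 1\le m\le n,
\]
where $\hat{\bm r}=\tilde{\bm r}/\tilde r$ and $P$ is a bounded polynomial in the components of $\hat{\bm r}$. Each derivative either falls on the Hankel factor, producing a factor $k_{\rm t}$, or on $\hat{\bm r}$ or $\tilde r^{-j}$, producing a factor $\tilde r^{-1}$. Using $\tilde r^{-1}<k_{\rm t}$, we get $k_{\rm t}^{m}\tilde r^{-(n-m)}\le k_{\rm t}^{n}$. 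Combining this with the first step gives a bound $Ck_{\rm t}^{n}(k_{\rm t}\tilde r)^{-1/2}=Ck_{\rm t}^{n-1/2}\tilde r^{-1/2}$.

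For $n=0,2,3,4$ this is exactly \eqref{GErr}, \eqref{GxyErr}, \eqref{GxxyErr} and \eqref{GxxxyErr}. For $n=1$ it yields $Ck_{\rm t}^{1/2}\tilde r^{-1/2}$. Because the hypothesis $k_{\rm t}\tilde r>1$ involves $k_{\rm t}$ and $\tilde r$ separately, this implies the stated bound $Ck_{\rm t}\tilde r^{-1/2}$ in \eqref{GyErr} only when $k_{\rm t}\ge1$, so the $n=1$ estimate must be stated with that proviso or proved in the sharper form $k_{\rm t}^{1/2}\tilde r^{-1/2}$. This is where I will be careful, and I plan to prove the sharper form.

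The main technical point is keeping the constants independent of $k_{\rm t}$, $\tilde{\bm x}$ and $\tilde{\bm y}$. This is handled by always reducing to the scaled variable $z=k_{\rm t}\tilde r\in[1,\infty)$, where the bounds on the Hankel functions and their derivatives are absolute. The lower-order terms from the chain rule, those with $m<n$, carry powers $\tilde r^{-(n-m)}$, and these are absorbed into powers of $k_{\rm t}$ using the condition $k_{\rm t}\tilde r>1$.
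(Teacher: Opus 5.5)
Your argument is correct and is essentially the paper's. The paper also bounds $|H_0^{(1)}(z)|\le|H_1^{(1)}(z)|\le|H_1^{(1)}(1)|z^{-1/2}$ for $z=k_{\rm t}\tilde r\ge1$, differentiates $\tfrac{\rm i}{4}H_0^{(1)}(k_{\rm t}\tilde r)$ by the chain rule, and absorbs lower-order terms using $\tilde r^{-1}<k_{\rm t}$. The differences are organizational: the paper writes out only the third and fourth derivatives explicitly (citing \cite{WWW25} for \eqref{GErr}--\eqref{GxyErr}), whereas your inductive form $k_{\rm t}^m(H_0^{(1)})^{(m)}(k_{\rm t}\tilde r)P(\hat{\bm r})\tilde r^{-(n-m)}$ treats all orders at once and, being homogeneous, gives constants uniform over all $k_{\rm t}>0$.

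Your caveat on \eqref{GyErr} is also correct, and it is a defect of the statement, not of your proof. Indeed $|\partial_{\tilde y_j}\gamma_{k_{\rm t}}|=\tfrac{k_{\rm t}}{4}|H_1^{(1)}(k_{\rm t}\tilde r)|\,|\tilde r_j|/\tilde r\ge c\,k_{\rm t}^{1/2}\tilde r^{-1/2}|\tilde r_j|/\tilde r$, so the bound $Ck_{\rm t}\tilde r^{-1/2}$ with $C$ independent of $k_{\rm t}$ fails as $k_{\rm t}\to0$. The sharp form $Ck_{\rm t}^{1/2}\tilde r^{-1/2}$ that you intend to prove is the right one.
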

\begin{proof}
	The proofs of inequalities \eqref{GErr}-\eqref{GxyErr} can be found in \cite{WWW25}. We now present the proofs of inequality \eqref{GxxyErr}-\eqref{GxxxyErr}. It can be seen from~\cite{OLBC10,WWW25} that 
	\begin{equation}
	\label{HankelErr}
	|H_0^{(1)}(k_{\rm t}\tilde r)| \leq |H_1^{(1)}(k_{\rm t}\tilde r)|\leq k_{\rm t}^{-1/2}|\tilde{\bm x}-\tilde{\bm y}|^{-1/2}|H_1^{(1)}(1)|.
	\end{equation}
	Utilizing the formula
	\begin{equation}
	\label{DHankel}
	\tfrac{d H_0^{(1)}(z)}{d z}=-H_1^{(1)}(z), \quad z \tfrac{d H_n^{(1)}(z)}{d z}+n H_n^{(1)}(z)=z H_{n-1}^{(1)}(z),
	\end{equation}
	we can derive that
	\begin{equation}
	\label{GxxyEq}
	\begin{aligned}
	&\tfrac{\partial^3 \gamma_{k_{\rm t}}(\tilde{\bm x},\tilde{\bm y})}{\partial \tilde{x}_l \partial \tilde{x}_i \partial \tilde{y}_j}=\tfrac{{\rm i}k_{\rm t}^2}{4}H_0^{(1)}(k_{\rm t} \widetilde{r})[\tfrac{\delta_{i l} \tilde r_j+\delta_{j l} \tilde r_i+\delta_{i j} \tilde r_l}{\widetilde{r}^2}-\tfrac{4 \tilde r_i \tilde r_j \tilde r_l}{\widetilde{r}^4}]\\
	& -\tfrac{{\rm i}k_{\rm t}H_1^{(1)}(k_{\rm t} \widetilde{r})}{4\widetilde{r}^5}[(k_{\rm t}^2 \widetilde{r}^2-8) \tilde r_i \tilde r_j \tilde r_l+2(\delta_{i l} \tilde r_j+\delta_{j l} \tilde r_i+\delta_{i j} \tilde r_l) \widetilde{r}^2],
	\end{aligned}
	\end{equation}
	where $\delta$ denotes Kronecker delta. Therefore, we have the following estimate:
	\begin{equation}
	\label{GxxyErr2}
	\begin{aligned}
	&\Big|\tfrac{\partial^3 \gamma_{k_{\rm t}}(\tilde{\bm x},\tilde{\bm y})}{\partial \tilde{x}_l\partial \tilde{x}_i \partial \tilde{y}_j }\Big|\leq C[\tfrac{k_{\rm t}^2H_0^{(1)}(k_{\rm t}\tilde r)}{\tilde r}+H_1^{(1)}(k_{\rm t}\tilde r)(k_{\rm t}^3+\tfrac{k_{\rm t}}{\tilde r^2})]\\
	&\leq C(k_{\rm t}^{3/2}\tilde r^{-3/2} +k_{\rm t}^{5/2}\tilde r^{-1/2}+k_{\rm t}^{1/2}\tilde r^{-5/2})\leq C k_{\rm t}^{5/2}\tilde r^{-1/2}.
	\end{aligned}
	\end{equation}
	Next, we consider the term $\tfrac{\partial^4 \gamma_{k_{\rm t}}}{\partial \tilde{x}_m \partial \tilde{x}_l \partial \tilde{x}_i \partial \tilde{y}_j}$. A direct calculation yields
	\begin{equation*}
	\begin{aligned}
	&\tfrac{\partial^4 \gamma_{k_{\mathrm{t}}}}{\partial \tilde{x}_m \partial \tilde{x}_l \partial \tilde{x}_i \partial \tilde{y}_j}= {\rm i} k_{\mathrm{t}}^2[\tfrac{24-k_{\mathrm{t}}^2 \tilde{r}^2}{4 \tilde{r}^6} R_{m l i j}-\tfrac{1}{\tilde{r}^4} S_{m l i j}+\tfrac{1}{4 \tilde{r}^2} T_{m l i j}] H_0^{(1)}\left(k_{\mathrm{t}} \tilde{r}\right) \\
	& +{\rm i} k_{\mathrm{t}}[\tfrac{2\left(k_{\mathrm{t}}^2 \tilde{r}^2-6\right)}{\tilde{r}^7} R_{m l i j}+\tfrac{8-k_{\mathrm{t}}^2 \tilde{r}^2}{4 \tilde{r}^5} S_{m l i j}-\tfrac{1}{2 \tilde{r}^3} T_{m l i j}] H_1^{(1)}\left(k_{\mathrm{t}} \tilde{r}\right),
	\end{aligned}
	\end{equation*}
	where
	\begin{equation*}
	\begin{aligned}
	&R_{m l i j}=\tilde{r}_m \tilde{r}_l \tilde{r}_i \tilde{r}_j, \quad T_{m l i j}=\delta_{m l} \delta_{i j}+\delta_{m i} \delta_{l j}+\delta_{m j}\delta_{l i},\\
	&S_{m l i j}=\delta_{m l} \tilde{r}_i \tilde{r}_j+\delta_{m i} \tilde{r}_l \tilde{r}_j+\delta_{m j} \tilde{r}_l \tilde{r}_i+\delta_{l i} \tilde{r}_m \tilde{r}_j+\delta_{l j} \tilde{r}_m \tilde{r}_i+\delta_{i j} \tilde{r}_m \tilde{r}_l.
	\end{aligned}
	\end{equation*}
	Using the inequality \eqref{HankelErr}, we can obtain that 
	\begin{equation}
	\begin{aligned}
	&\Big|\tfrac{\partial^4 \gamma_{k_{\rm t}}(\tilde{\bm x}, \tilde{\bm y})}{\partial \tilde{x}_m \partial \tilde{x}_l \partial \tilde{x}_i \partial \tilde{y}_j}\Big|\leq C[k_{\rm t}^{7/2}\tilde r^{-1/2}+(k_{\rm t}^{5/2}+k_{\rm t}^{3/2})\tilde r^{-3/2}+k_{\rm t}^{3/2}\tilde r^{-5/2}\\
	&+(k_{\rm t}^{1/2}+k_{\rm t}^{-1/2})\tilde r^{-7/2}]\leq C k_{\rm t}^{7/2}\tilde r^{-1/2}.
	\end{aligned}
	\end{equation}
	This completes the proof. Note that we also have $\tfrac{\partial \gamma_t}{\partial\tilde x_i}=-\tfrac{\partial \gamma_t}{\partial\tilde y_i}$. 
\end{proof}

For the subsequent analysis, we introduce the weighted $\bm H^{1/2}(\Gamma)$ norm~\cite{CZ10,CZ17} defined by
\begin{equation}
\label{H211}
\|\bm u\|_{\bm H^{1 / 2}(\Gamma)}=\Big( \sum_{i=1}^{2} \|u_i\|_{H^{1/2}(\Gamma)}^2 \Big)^{1/2},
\end{equation}
where
\begin{equation}
\label{H212}
\|u_i\|_{\bm H^{1 / 2}(\Gamma)}=(|\Gamma|^{-1}\|u_i\|_{L^2(\Gamma)}^2+|\Gamma|^{-1}|u_i|_{H^{1 / 2}(\Gamma)}^2)^{1/2},
\end{equation}
with 
\begin{equation}
\label{H213}
|u_i|_{H^{1 / 2}(\Gamma)}^2=\int_{\Gamma} \int_{\Gamma} \tfrac{|u_i(\bm{x})-u_i(\bm y)|^2}{|\bm{x}-\bm y|^2} ds_{\bm x}d s_{\bm y}.
\end{equation}

We define the notation $\widetilde\Gamma_2=\left\{|r|=\tau(b(\theta))\right\}$. With the above notations, we can now estimate the single and double layer potentials. 

\begin{lemma}
	\label{SLPE}
	
	Let $\widetilde\Gamma_2$ be a piecewise smooth, simply connected, closed curve in $\Omega_1^c$ satisfying $k_{\rm t} \operatorname{dist}(\widetilde\Gamma_2 ; \Gamma_1)>1$. Then for any $\bm\lambda \in\bm H^{-1 / 2}(\Gamma_1)$, we have 
	\begin{equation}
	\|\mathcal S(\bm\lambda)(\tilde{\bm x})\|_{\bm H^{1 / 2}(\widetilde\Gamma_2)} \leq C \max\big\{k_{\rm s}^{-1/2},k_{\rm s}^{7/2}\big\}\big\{\operatorname{dist}(\widetilde\Gamma_2 ; \Gamma_1)\big\}^{-\frac{1}{2}}\|\bm \lambda\|_{\bm H^{-1 / 2}(\Gamma_1)},
	\end{equation}
	where $C$ is a positive constant independent of $k_{\rm t}$ and $\bm\lambda$.
\end{lemma}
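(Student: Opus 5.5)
The plan is to bound the single-layer potential pointwise and then feed those bounds into the weighted norm \eqref{H211}--\eqref{H213}. Since $\bm\lambda\in\bm H^{-1/2}(\Gamma_1)$, duality gives, for each fixed $\tilde{\bm x}\in\widetilde\Gamma_2$,
\[
|\mathcal S(\bm\lambda)(\tilde{\bm x})|\le \|\mathbb G(\tilde{\bm x},\cdot)\|_{\bm H^{1/2}(\Gamma_1)}\|\bm\lambda\|_{\bm H^{-1/2}(\Gamma_1)}.
\]
For the weighted $H^{1/2}(\Gamma_1)$ norm of the kernel in $\tilde{\bm y}$, I would use the trivial bound
\[
\|w\|_{H^{1/2}(\Gamma_1)}\le C\big(\|w\|_{L^\infty(\Gamma_1)}+\|\nabla_{\tilde{\bm y}}w\|_{L^\infty(\Gamma_1)}\big),
\]
which holds because $|w(\bm x)-w(\bm y)|\le \|\nabla w\|_\infty |\bm x-\bm y|$ and the $|\Gamma_1|^{-1}$ weights normalize the double integral. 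So the first step reduces everything to sup bounds on $\mathbb G$ and $\nabla_{\tilde{\bm y}}\mathbb G$ over $\tilde{\bm y}\in\Gamma_1$, with $\tilde r\ge \operatorname{dist}(\widetilde\Gamma_2;\Gamma_1)=:D$ and $k_{\rm t}\tilde r>1$.

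The second step computes these bounds from \eqref{FS}, using Lemma~\ref{FSE}.

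\emph{First term.} The part $\mu^{-1}\gamma_{k_{\rm s}}$ is bounded by $Ck_{\rm s}^{-1/2}D^{-1/2}$ via \eqref{GErr}. Its $\tilde{\bm y}$-gradient is bounded by $Ck_{\rm s}D^{-1/2}$ via \eqref{GyErr}.

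\emph{Hessian term.} The part $(\rho\omega^2)^{-1}\nabla\nabla^\top(\gamma_{k_{\rm s}}-\gamma_{k_{\rm p}})$ is handled as follows.
\begin{itemize}
\item Its size is controlled by \eqref{GxyErr}, via $\partial_{\tilde x}=-\partial_{\tilde y}$, giving $k_{\rm t}^{3/2}D^{-1/2}$.
\item Its $\tilde{\bm y}$-derivative is a third derivative, controlled by \eqref{GxxyErr}, giving $k_{\rm t}^{5/2}D^{-1/2}$.
\item Since $\rho\omega^2=\mu k_{\rm s}^2=(\lambda+2\mu)k_{\rm p}^2$ and $k_{\rm p}<k_{\rm s}$, every resulting power of $k_{\rm t}$ is dominated by a power of $k_{\rm s}$ in the range $[-1/2,7/2]$, with constants depending only on $\lambda,\mu$.
\end{itemize}
Each power $k^\alpha$ with $\alpha\in[-1/2,7/2]$ satisfies $k^\alpha\le\max\{k^{-1/2},k^{7/2}\}$. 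This yields, uniformly in $\tilde{\bm x}\in\widetilde\Gamma_2$,
\[
|\mathcal S(\bm\lambda)(\tilde{\bm x})|\le C\max\{k_{\rm s}^{-1/2},k_{\rm s}^{7/2}\}D^{-1/2}\|\bm\lambda\|_{\bm H^{-1/2}(\Gamma_1)}.
\]

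The third step bounds the $H^{1/2}(\widetilde\Gamma_2)$ seminorm, for which I need a Lipschitz bound in $\tilde{\bm x}$. I will apply the same duality argument to $\nabla_{\tilde{\bm x}}\mathcal S(\bm\lambda)$, which requires the $H^{1/2}(\Gamma_1)$ norm in $\tilde{\bm y}$ of $\nabla_{\tilde{\bm x}}\mathbb G$.
\begin{itemize}
\item For the Hessian part, this needs up to fourth mixed derivatives, which are exactly \eqref{GxxxyErr} and give $k_{\rm t}^{7/2}D^{-1/2}$. This is the top exponent in the statement.
\item The $\gamma_{k_{\rm s}}\mathbb I$ part needs $\partial_{\tilde x}\partial_{\tilde y}\gamma$, which is \eqref{GxyErr}.
\end{itemize}
Then $|\mathcal S(\bm\lambda)(\tilde{\bm x})-\mathcal S(\bm\lambda)(\tilde{\bm z})|\le L|\tilde{\bm x}-\tilde{\bm z}|$ along $\widetilde\Gamma_2$ for the resulting constant $L$. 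Because $\widetilde\Gamma_2$ is piecewise smooth, chord length and arc length are comparable, so the double integral in \eqref{H213}, normalized by $|\widetilde\Gamma_2|^{-1}$, is bounded by a constant times $L^2$. The same normalization handles the $L^2$ part using the sup bound from step two.

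I expect the main obstacle to be the bookkeeping that makes $C$ independent of $k_{\rm t}$ and of the curve sizes. Two points need care: the $|\Gamma|^{-1}$ weights, which must absorb the curve lengths, and the comparability of chord and arc length on a piecewise smooth curve, which introduces a geometry-dependent constant only. I would also check that the lower-order singular terms such as $\tilde r^{-7/2}$ are absorbed, which is where $k_{\rm t}\tilde r>1$ is used, exactly as in the proof of Lemma~\ref{FSE}.
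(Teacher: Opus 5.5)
Your proposal is correct and is essentially the paper's proof. It uses the same three ingredients: duality against $\bm H^{1/2}(\Gamma_1)$, sup bounds on $\mathbb G(\tilde{\bm x},\cdot)$ and its derivatives up to fourth order from Lemma~\ref{FSE}, and a mean-value (Lipschitz) bound for the seminorm on $\widetilde\Gamma_2$. The paper differs only in passing through the trace theorem and $\|G_{ij}(\tilde{\bm x},\cdot)\|_{H^1(\Omega_1)}$ where you bound the $H^{1/2}(\Gamma_1)$ norm intrinsically on $\Gamma_1$, and in silently absorbing the factor $(\rho\omega^2)^{-1}=(\mu k_{\rm s}^2)^{-1}$ that you track.

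One caveat, which the paper shares, concerns your claim that the $|\widetilde\Gamma_2|^{-1}$ weight absorbs the curve length. With \eqref{H212} read literally, the pure Lipschitz bound gives $|\widetilde\Gamma_2|^{-1}|\mathcal S(\bm\lambda)|^2_{H^{1/2}(\widetilde\Gamma_2)}\le CL^2|\widetilde\Gamma_2|$ rather than $CL^2$. To keep $C$ independent of the exponentially large length of $\widetilde\Gamma_2$, split the double integral at $|\tilde{\bm x}-\tilde{\bm z}|=M/L$, with $M$ your sup bound; this gives $CLM\le C\max\{L,M\}^2$.
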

\begin{proof}
	We rewrite the fundamental solution \eqref{FS} as
	\begin{equation*}
	\mathbb G(\tilde{\bm x},\tilde{\bm y})=\begin{bmatrix}
	G_{11}&G_{12}\\[4pt]
	G_{21}&G_{22}
	\end{bmatrix}=\begin{bmatrix}
	\frac{\gamma_{k_{\rm s}}}{\mu}+\frac{1}{\rho\omega^2}\frac{\partial^2(\gamma_{k_{\rm s}}-\gamma_{k_{\rm p}})}{\partial{\tilde x}_1^2}&\frac{1}{\rho\omega^2}\frac{\partial^2(\gamma_{k_{\rm s}}-\gamma_{k_{\rm p}})}{\partial \tilde x_1\partial \tilde x_2}\\[4pt]
	\frac{1}{\rho\omega^2}\frac{\partial^2(\gamma_{k_{\rm s}}-\gamma_{k_{\rm p}})}{\partial \tilde x_2\partial \tilde x_1}&\frac{\gamma_{k_{\rm s}}}{\mu}+\frac{1}{\rho\omega^2}\frac{\partial^2(\gamma_{k_{\rm s}}-\gamma_{k_{\rm p}})}{\partial \tilde x_2\partial \tilde x_2}
	\end{bmatrix}.
	\end{equation*}
	We define $(s_1,s_2)^\top:=\mathcal S(\bm\lambda)$. In view of definition \eqref{SDLR}, combining the dual norm estimate with the trace theorem gives
	\begin{equation}
	\label{s_i}
	\begin{aligned}
	|s_i(\tilde{\bm x})|=|\langle G_{i1},\lambda_1\rangle+\langle G_{i2},\lambda_2\rangle|
	&\leq\sum_{j=1}^{2}\|\lambda_j\|_{\bm H^{-1 / 2}(\Gamma_1)}\|G_{ij}\|_{\bm H^{1 / 2}(\Gamma_1)}\\
	&\leq \sum_{j=1}^{2}\|\lambda_j\|_{\bm H^{-1 / 2}(\Gamma_1)}\|G_{ij}\|_{\bm H^1(\Omega_1)},
	\end{aligned}
	\end{equation}
	where $\langle ,\rangle$ denotes the dual pairings on $H^{1/2}(\Gamma_1)\times H^{-1/2}(\Gamma_1)$. By means of the embedding property and Lemma \eqref{FSE}, we can obtain that
	\begin{equation*}
	\begin{aligned}
	\|G_{11}(\tilde{\bm x},:)&\|_{H^1(\Omega_1)} \leq C|\Omega_1|^{1 / 2}\big(\|G_{11}(\tilde{\bm x}, \cdot)\|_{L^{\infty}(\Omega_1)}\\
	&\quad +\|\nabla_{\tilde{\bm{y}}} G_{11}(\tilde{\bm x}, \cdot)\|_{L^{\infty}(\Omega_1)})\leq C(\|\gamma_{k_{\rm s}}\|_{L^{\infty}(\Omega_1)}+\|\nabla_{\tilde{\bm y}}\gamma_{k_{\rm s}}\|_{L^{\infty}(\Omega_1)}\\
	&\quad +\big\|\tfrac{\partial^2(\gamma_{k_{\rm s}}-\gamma_{k_{\rm p}})}{\partial{\tilde x}_1^2}\big\|_{L^{\infty}(\Omega_1)}+\big\|\nabla_{\tilde{\bm y}}\tfrac{\partial^2(\gamma_{k_{\rm s}}-\gamma_{k_{\rm p}})}{\partial{\tilde x}_1^2}\big\|_{L^{\infty}(\Omega_1)}\big)\\
	&\leq C\max\{k_{\rm s}^{-1/2},k_{\rm s}^{5/2}\}{\tilde r}^{-1/2} \leq C\max\{k_{\rm s}^{-1/2},k_{\rm s}^{5/2}\}\{{\operatorname{dist}}(\tilde{\bm x};\Gamma_1)\}^{-1/2}.
	\end{aligned}
	\end{equation*}
	Similar arguments can lead to
	\begin{equation*}
	\begin{aligned}
	&\|G_{ij}(\tilde{\bm x},:)\|_{H^1(\Omega_1)}\leq  C\max\{k_{\rm s}^{-1/2},k_{\rm s}^{5/2}   \}\{{\operatorname{dist}}(\tilde{\bm x};\Gamma_1)\}^{-1/2},\quad i,j=1,2.
	\end{aligned}
	\end{equation*}
	Therefore, we can obtain that
	\begin{equation}
	|s_i(\tilde{\bm x})|\leq C\max\{k_{\rm s}^{-1/2},k_{\rm s}^{5/2}\}\{{\operatorname{dist}}(\tilde{\bm x};\Gamma_1)\}^{-1/2}\|\bm\lambda\|_{\bm H^{-1/2}(\Gamma_1)}.
	\end{equation}
	Thus, we have
	\begin{equation}
	\label{SDLL2}
	\begin{aligned}
	&|\widetilde\Gamma_2|^{-1/2}\|s_i\|_{L^2(\widetilde\Gamma_2)}\leq C\|s_i\|_{L^\infty(\widetilde\Gamma_2)}\\
	&\leq C\max\{k_{\rm s}^{-1/2},k_{\rm s}^{5/2}   \}\sup_{\tilde{\bm x}\in\widetilde\Gamma_2}\{{\operatorname{dist}}(\tilde{\bm x};\Gamma_1)  \}^{-1/2}\|\bm\lambda\|_{\bm H^{-1/2}(\Gamma_1)}\\
	&=C\max\{k_{\rm s}^{-1/2},k_{\rm s}^{5/2}   \}\{{\operatorname{dist}}(\widetilde\Gamma_2;\Gamma_1)  \}^{-1/2}\|\bm\lambda\|_{\bm H^{-1/2}(\Gamma_1)}.
	\end{aligned}
	\end{equation}
	Next, we estimate $|s_i|_{H^{1/2}(\widetilde\Gamma_2)}$. For any $\tilde{\bm x},{\tilde{\bm x}}'\in\widetilde\Gamma_2$,  the mean value theorem gives that
	\begin{equation*}
	|s_i(\tilde{\bm x})-s_i({\tilde{\bm x}}')|\leq C\|\nabla_{\tilde{\bm x}}s_i(\tilde{\bm x})\|_{L^\infty(\widetilde\Gamma_2)}|\tilde{\bm x}-{\tilde{\bm x}}'|.
	\end{equation*}
	Applying definition \eqref{SDLR} again, and invoking the dual norm estimate and trace theorem, we obtain
	\begin{align}
	\nonumber
	|\nabla_{\tilde{\bm x}}s_i(\tilde{\bm x})|&\leq (\|\lambda_1\|_{H^{-1/2}(\Gamma_1)}\|\nabla_{\tilde{\bm x}}G_{i1}\|_{H^{1/2}(\Gamma_1)}+\|\lambda_2\|_{H^{-1/2}(\Gamma_1)}\|\nabla_{\tilde{\bm x}}G_{i2}\|_{H^{1/2}(\Gamma_1)})\\
	\label{DSLDP1}
	&\leq (\|\lambda_1\|_{H^{-1/2}(\Gamma_1)}\|\nabla_{\tilde{\bm x}}G_{i1}\|_{H^1(\Gamma_1)}+\|\lambda_2\|_{H^{-1/2}(\Gamma_1)}\|\nabla_{\tilde{\bm x}}G_{i2}\|_{H^1(\Gamma_1)}).
	\end{align}
	According to the embedding property and Lemma \eqref{GErr}, we have
	\begin{equation}
	\label{DSLDP2}
	\begin{aligned}
	&\| \nabla_{\tilde{\bm x}} G_{i1} \|_{H^1(\Omega_1)} \leq C |\Omega_1|^{1/2} ( \| \nabla_{\tilde{\bm x}} G_{i1} \|_{L^\infty(\Omega_1)}  \\
	&  + \| \nabla_{\tilde{\bm{y}}} \nabla_{\tilde{\bm x}} G_{i1} \|_{L^\infty(\Omega_1)} ) \leq C \max \{ k_{\rm s}, k_{\rm s}^{7/2} \} \{ \operatorname{dist}(\tilde{\bm x}; \Gamma_1) \}^{-1/2}.
	\end{aligned}
	\end{equation}
	Combining \eqref{H213} and \eqref{DSLDP1}-\eqref{DSLDP2} yields
	\begin{equation}
	\label{SDLDPErr2}
	\begin{aligned}
	|\widetilde\Gamma_2|^{-1}|s_i|_{H^{1/2}(\widetilde\Gamma_2)}&\leq C\|\nabla_{\tilde{\bm x}}s_i\|_{L^\infty(\widetilde\Gamma_2)}\\
	&\leq C\max\{k_{\rm s},k_{\rm s}^{7/2}\}\sup_{\tilde{\bm x}\in\widetilde\Gamma_2}\{{\operatorname{dist}}(\tilde{\bm x};\Gamma_1)\}^{-1/2}\|\bm\lambda\|_{H^{-1/2}(\Gamma_1)}\\
	&\leq C\max\{k_{\rm s},k_{\rm s}^{7/2}\}\{{\operatorname{dist}}(\widetilde\Gamma_2;\Gamma_1)\}^{-1/2}\|\bm\lambda\|_{H^{-1/2}(\Gamma_1)}.
	\end{aligned}
	\end{equation}
	Consequently, according to \eqref{SDLL2} and \eqref{SDLDPErr2}, we get
	\begin{equation*}
	\|\mathcal S(\bm\lambda)(\tilde{\bm x})\|_{\bm H^{1 / 2}(\Gamma)} \leq C \max\{k_{\rm s}^{-1/2},k_{\rm s}^{7/2}\}\{\operatorname{dist}(\widetilde\Gamma_2 ; \Gamma_1)\}^{-\frac{1}{2}}\|\bm \lambda\|_{\bm H^{-1 / 2}(\Gamma_1)}.
	\end{equation*}
	This completes the proof.
\end{proof}

\begin{lemma}
	\label{DLPE}
	Under the same conditions as in Lemma \ref{SLPE}, we have that for any $\bm \xi\in \bm H^{1/2}(\Gamma_1),$
	\begin{equation}
	\|\mathcal D(\bm\xi)(\tilde{\bm x})\|_{\bm H^{1 / 2}(\widetilde\Gamma_2)} \leq C \max\{k_{\rm p},k_{\rm s}^{7/2}\}\{\operatorname{dist}(\widetilde\Gamma_2 ; \Gamma_1)\}^{-\frac{1}{2}}\|\bm \xi\|_{\bm H^{1 / 2}(\Gamma_1)},
	\end{equation}
	where $C$ is a positive constant independent of $k_{\rm t}$ and $\bm\xi$.
\end{lemma}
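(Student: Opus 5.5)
The plan is to follow the proof of Lemma \ref{SLPE} step by step, with the kernel $\mathbb G$ replaced by the double-layer kernel $\mathbb K(\tilde{\bm x},\tilde{\bm y}):=[T(\partial_{\tilde{\bm y}},\bm\nu_{\tilde{\bm y}})\mathbb G(\tilde{\bm x},\tilde{\bm y})]^\top$.

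First we write $\mathbb K$ explicitly. Using $T(\partial,\bm\nu)\bm v=\lambda\bm\nu(\nabla\cdot\bm v)+2\mu\partial_{\bm\nu}\bm v-\mu\bm\tau\curl\bm v$ applied column by column to \eqref{FS}, each entry $K_{ij}$ becomes a linear combination, with coefficients depending only on $\lambda,\mu,\rho\omega^2$ and the bounded components $\nu_l(\tilde{\bm y}),\tau_l(\tilde{\bm y})$, of two kinds of terms:
\begin{itemize}
\item first derivatives $\partial_{\tilde y_j}\gamma_{k_{\rm s}}$;
\item third derivatives $\partial^3_{\tilde x_l\tilde x_i\tilde y_j}(\gamma_{k_{\rm s}}-\gamma_{k_{\rm p}})$, with the prefactor $1/(\rho\omega^2)$.
\end{itemize}
Here we use $\partial_{\tilde x}\gamma=-\partial_{\tilde y}\gamma$ to move all derivatives onto a mixed form.

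It is also convenient to use $\nabla\cdot\big(\nabla\nabla^\top(\gamma_{k_{\rm s}}-\gamma_{k_{\rm p}})\big)=\nabla(-k_{\rm s}^2\gamma_{k_{\rm s}}+k_{\rm p}^2\gamma_{k_{\rm p}})$. Since $\rho\omega^2=(\lambda+2\mu)k_{\rm p}^2$, the divergence part of the traction produces a term $\tfrac{\lambda}{\lambda+2\mu}\nabla\gamma_{k_{\rm p}}$. This is the origin of the $k_{\rm p}$ in the stated maximum: the bound \eqref{GyErr} on it gives $Ck_{\rm p}\tilde r^{-1/2}$.

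Next, for fixed $\tilde{\bm x}\in\widetilde\Gamma_2$, we estimate the components $d_i(\tilde{\bm x})=\sum_j\langle K_{ij}(\tilde{\bm x},\cdot),\xi_j\rangle$ of $\mathcal D(\bm\xi)$. Here $\xi\in H^{1/2}$, so we pair it against $K_{ij}$ in $H^{-1/2}$ (or $L^2$). We bound $\|K_{ij}(\tilde{\bm x},\cdot)\|$ by $C\|K_{ij}\|_{L^\infty(\Gamma_1)}$, or, as in Lemma \ref{SLPE}, by the $W^{1,\infty}$ norm on a neighborhood, since all kernels are smooth once $k_{\rm t}\tilde r>1$. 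Lemma \ref{FSE} then gives:
\begin{itemize}
\item the first-order terms are bounded by $Ck_{\rm t}\tilde r^{-1/2}$ via \eqref{GyErr};
\item the third-order terms are bounded by $Ck_{\rm t}^{5/2}\tilde r^{-1/2}$ via \eqref{GxxyErr}, and the fourth-order terms needed for a $y$-gradient by $Ck_{\rm t}^{7/2}\tilde r^{-1/2}$ via \eqref{GxxxyErr}.
\end{itemize}
The factor $1/(\rho\omega^2)\sim k_{\rm s}^{-2}$ only helps or is absorbed into $C$. Since $k_{\rm p}<k_{\rm s}$, collecting terms gives
\[
|d_i(\tilde{\bm x})|\le C\max\{k_{\rm p},k_{\rm s}^{7/2}\}\,\mathrm{dist}(\tilde{\bm x};\Gamma_1)^{-1/2}\|\bm\xi\|_{\bm H^{1/2}(\Gamma_1)}.
\]
This yields the weighted $L^2(\widetilde\Gamma_2)$ part exactly as in \eqref{SDLL2}.

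For the seminorm we proceed as before. By the mean value theorem, $|d_i(\tilde{\bm x})-d_i(\tilde{\bm x}')|\le\|\nabla_{\tilde{\bm x}}d_i\|_{L^\infty}|\tilde{\bm x}-\tilde{\bm x}'|$, so \eqref{H213} is controlled by $\|\nabla_{\tilde{\bm x}}d_i\|_{L^\infty(\widetilde\Gamma_2)}$. The function $\nabla_{\tilde{\bm x}}d_i$ involves one more $\tilde x$-derivative of $\mathbb K$, that is, second and fourth mixed derivatives, which are bounded by \eqref{GxyErr} and \eqref{GxxxyErr}. Taking the supremum over $\widetilde\Gamma_2$ and combining the two parts gives the claimed bound.

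The main obstacle is the power counting. Applying the dual estimate to $\mathbb K$ together with its gradient in $\tilde{\bm x}$ nominally calls for derivatives up to order five, for which Lemma \ref{FSE} gives no bound. To avoid this I would pair $\xi\in H^{1/2}\subset L^2(\Gamma_1)$ directly with $\sup|K_{ij}|$, which loses no $\tilde y$-derivative. Then only up to the fourth-order bound \eqref{GxxxyErr} is needed, and the $k_{\rm s}^{7/2}$ rate matches the statement. If this turns out to be insufficient, I would instead bound the fifth derivative by the same Hankel recursion \eqref{DHankel} used in the proof of Lemma \ref{FSE}.
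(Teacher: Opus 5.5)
Your proposal follows the paper's proof essentially step for step: the paper likewise bounds $d_i$ and $\nabla_{\tilde{\bm x}}d_i$ pointwise by $\sum_j\|K_{ij}(\tilde{\bm x},\cdot)\|_{L^\infty(\Gamma_1)}\|\xi_j\|_{L^1(\Gamma_1)}$ (resp.\ with $\nabla_{\tilde{\bm x}}K_{ij}$), uses $\|\bm\xi\|_{\bm L^1(\Gamma_1)}\le C\|\bm\xi\|_{\bm H^{1/2}(\Gamma_1)}$, controls the seminorm by the mean value theorem, and so needs Lemma~\ref{FSE} only up to fourth order. The differences are cosmetic: the paper writes the kernel via the G\"unter-derivative formula \eqref{TyE} rather than applying $\bm T$ directly to \eqref{FS}, and your divergence identity makes the origin of the $k_{\rm p}$ term more transparent. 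One caveat applies to both arguments: the prefactor $1/(\rho\omega^2)=1/(\mu k_{\rm s}^2)$ is tacitly treated as a bounded constant, which is justified only for $\omega$ bounded away from zero, so your remark that it ``only helps or is absorbed into $C$'' should be qualified accordingly.
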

\begin{proof}
	It can be seen from \cite{HW08,BXY17,BXY191} that
	\begin{equation}
	\label{TyE}
	\begin{aligned}
	&\begin{bmatrix}
	Q_{11}&Q_{12}\\[4pt]
	Q_{21}&Q_{22}
	\end{bmatrix}:=\bm T(\partial_{\tilde{\bm y}}, \bm \nu_{\tilde{\bm y}})\mathbb E(\tilde{\bm x},\tilde{\bm y}) =-\bm \nu_{\tilde{\bm y}} \nabla_{\tilde{\bm y}}^{\top}[\gamma_{k_{\rm s}}(\tilde{\bm x},\tilde{\bm y})-\gamma_{k_{\rm p}}(\tilde{\bm x},\tilde{\bm y})] \\[4pt]
	&+\partial_{\bm \nu_{\tilde{\bm y}}} \gamma_{k_{\rm s}}(\tilde{\bm x},\tilde{\bm y}) \mathbb I +M_{\tilde{\bm y}}[2 \mu\mathbb E(\tilde{\bm x},\tilde{\bm y})-\gamma_{k_{\rm s}}(\tilde{\bm x},\tilde{\bm y}) \mathbb I],
	\end{aligned}
	\end{equation}
	where the operator $M$ denotes the G$\ddot{\text u}$nter derivatives matrix given by
	\begin{equation*}
	M_{\tilde{\bm y}}=\begin{bmatrix}
	0&\nu_{\tilde y_2}\partial_{\tilde y_2}-\nu_{\tilde y_1}\partial_{\tilde y_1}\\[4pt]
	\nu_{\tilde y_1}\partial_{\tilde y_1}-\nu_{\tilde y_2}\partial_{\tilde y_2}&0
	\end{bmatrix}.
	\end{equation*}
	Now, we define $(p_1,p_2)^\top=\mathcal D(\bm\xi)$. Using the formula \eqref{DPL}, \eqref{TyE} and Lemma \eqref{FSE}, we can obtain that
	\begin{equation*}
	\begin{aligned}
	&|p_i(\tilde{\bm x})|=\sum_{j=1}^{2}\big|\langle Q_{ji},\xi_j\rangle\big|\leq C\sum_{j=1}^{2}\|Q_{ji}(\tilde{\bm x},:)\|_{L^\infty(\Gamma_1)}\|\xi_j\|_{L^1(\Gamma_1)}\\
	&\leq C\max\{k_{\rm p},k_{\rm s}^{5/2}\}\{\text{dist}(\tilde{\bm x};\Gamma_1)\}^{-1/2}\|\bm\xi\|_{\bm L^1(\Gamma_1)}.
	\end{aligned}
	\end{equation*}
	Since $\widetilde\Omega_{\rm PML}$ is bounded, using the embedding theorem, we arrive at
	\begin{equation*}
	\|\bm\xi\|_{\bm L^1(\Gamma_1)}\leq C\|\bm\xi\|_{\bm L^2(\Gamma_1)}\leq C\|\bm\xi\|_{\bm H^{1/2}(\Gamma_1)}.
	\end{equation*}
	Then we have 
	\begin{equation}
	\label{DLP1}
	\begin{aligned}
	&|\widetilde\Gamma_2|^{-1/2}\|p_i\|_{L^2(\widetilde\Gamma_2)}\leq \|p_i\|_{L^\infty(\widetilde\Gamma_2)}\\
	&\leq \max\{k_{\rm s},k_{\rm s}^{5/2}\}\sup_{\tilde{\bm x}\in\widetilde\Gamma_2}\{\text{dist}(\tilde{\bm x};\Gamma_1)\}^{-1/2}\|\bm\xi\|_{\bm H^{1/2}(\Gamma_1)}\\
	&\leq \max\{k_{\rm s},k_{\rm s}^{5/2}\}\{\text{dist}(\widetilde\Gamma_2;\Gamma_1)\}^{-1/2}\|\bm\xi\|_{\bm H^{1/2}(\Gamma_1)}.
	\end{aligned}
	\end{equation}
	By means of the mean value theorem, we conclude that for any $\tilde{\bm x},{\tilde{\bm x}}'\in\widetilde\Gamma_2$
	\begin{equation*}
	|p_i(\tilde{\bm x})-p_i({\tilde{\bm x}}')|\leq C |\nabla_{\tilde{\bm x}}p_i(\tilde{\bm x})|_{L^\infty(\widetilde\Gamma_2)}|\tilde{\bm x}-{\tilde{\bm x}}'|.
	\end{equation*}
	Utilizing the formula \eqref{TyE} and Lemma \eqref{FSE}, we can obtain that
	\begin{equation*}
	\begin{aligned}
	&|\nabla_{\tilde{\bm x}}p_i(\tilde{\bm x})|\leq \sum_{j=1}^{2}\|\nabla_{\tilde{\bm x}}Q_{ji}(\tilde{\bm x},:)\|_{L^\infty(\Gamma_1)}\|\xi_j\|_{L^1(\Gamma_1)}\\
	&\leq C\max\{k_{\rm p}^{3/2},k_{\rm s}^{7/2}\}\{\text{dist}(\widetilde\Gamma_2;\Gamma_1)\}^{-1/2}\|\bm\xi\|_{\bm H^{1/2}(\Gamma_1)}.
	\end{aligned}
	\end{equation*}
	Therefore, by the definition \eqref{H213}, we can obtain that
	\begin{equation}
	\label{DLP2}
	\begin{aligned}
	&|\widetilde\Gamma_2|^{-1}|p_i(\tilde{\bm x})|_{H^{1/2}(\widetilde\Gamma_2)}\leq C\|\nabla_{\tilde{\bm x}}p_i(\tilde{\bm x})\|_{L^\infty(\widetilde\Gamma_2)}\\
	&\leq C\max\{k_{\rm p}^{3/2},k_{\rm s}^{7/2}\}\{\text{dist}(\widetilde\Gamma_2;\Gamma_1)\}^{-1/2}\|\bm\xi\|_{\bm H^{1/2}(\Gamma_1)}.
	\end{aligned}
	\end{equation}
	A combination of \eqref{DLP1} and \eqref{DLP2} completes the proof.
\end{proof}

With the above preparations, we now turn to the convergence analysis of the rectangular RCL. 

\begin{theorem}
	\label{UE}
	
	If $-\rho\omega^2$ is not an eigenvalue of the interior Dirichlet problem for the Lam\'e operator $\Delta^*$, the equation \eqref{PMLeq2} has a unique solution $\hat{\bm u} \in H^1(\Omega_b)$, which converges to the solution $\bm u$ of the original problem \eqref{OP} exponentially in $\Omega_1$,
	\begin{equation}
	\label{FError}
	\|\bm u-\hat{\bm u}\|_{\bm H^1(\Omega_1) \mid}\leq C\max\{k_{\rm p},k_{\rm s}^{-1/2},k_{\rm s}^{7/2}\}e^{-\frac{1}{2}\tau_0\operatorname{dist}(\Gamma_b,\Gamma_1)}\|{\bm u}\|_{\bm H^{1/2}(\Gamma_1)},
	\end{equation}
	where $C$ is a positive constant independent of $k_{\rm t}$.
\end{theorem}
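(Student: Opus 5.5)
The argument compares $\hat{\bm u}$ with the restriction of the exact stretched solution $\tilde{\bm u}$ of \eqref{PMLeq1} to $\Omega_b$. The structure is the standard PML-type one: the error $\bm w=\tilde{\bm u}-\hat{\bm u}$ solves a homogeneous RCL problem whose only data is the trace of $\tilde{\bm u}$ on $\Gamma_2$, and that trace is exponentially small by Lemmas \ref{SLPE}–\ref{DLPE}.

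\textbf{Reduction to a Dirichlet problem in physical coordinates.} Since the map \eqref{RCL} is real, smooth (Lipschitz across $\Gamma_1$) and invertible, a function $\bm v$ solves the second equation of \eqref{PMLeq2} in $\Omega_2$ if and only if $\bm V(\tau,\theta)=\bm v(r,\theta)$ solves the untransformed Navier equation $\Delta^*_{\tilde{\bm x}}\bm V+\rho\omega^2\bm V=\bm 0$ in $\tau(\Omega_2)$. The transmission conditions on $\Gamma_1$ are also preserved, because $\tau=r$ and $\tau_r=1$ there. Hence \eqref{PMLeq2} is equivalent to the problem
\begin{equation*}
\Delta^*\widehat{\bm U}+\rho\omega^2\theta\,\widehat{\bm U}=\bm f \ \text{in } \widetilde\Omega_b,\qquad \widehat{\bm U}=\bm g \ \text{on } \Gamma,\qquad \widehat{\bm U}=\bm 0 \ \text{on } \widetilde\Gamma_2 ,
\end{equation*}
posed on the bounded domain $\widetilde\Omega_b$. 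I interpret the hypothesis of the theorem as exactly the assumption that this Dirichlet problem, i.e.\ the associated homogeneous one, has only the trivial solution. The Lamé operator is strongly elliptic, so the Gårding inequality holds, and the Fredholm alternative then gives existence, uniqueness and the stability bound $\|\widehat{\bm U}\|_{\bm H^1}\le C(\text{data})$. Pulling back through $\tau$ yields the unique solution $\hat{\bm u}\in\bm H^1(\Omega_b)$.

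\textbf{Error equation.} Set $\bm W=\widetilde{\bm U}-\widehat{\bm U}$, where $\widetilde{\bm U}$ is the exact scattering solution from \eqref{PMLOeq}. Then $\bm W$ satisfies the homogeneous equation in $\widetilde\Omega_b$ with $\bm W=\bm 0$ on $\Gamma$ and $\bm W=\widetilde{\bm U}$ on $\widetilde\Gamma_2$. Lifting the boundary datum and using the same stability estimate gives
\begin{equation*}
\|\bm u-\hat{\bm u}\|_{\bm H^1(\Omega_1)}=\|\bm W\|_{\bm H^1(\Omega_1)}\le C\|\widetilde{\bm U}\|_{\bm H^{1/2}(\widetilde\Gamma_2)},
\end{equation*}
where the first equality uses that the map is the identity on $\Omega_1$.

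\textbf{Trace estimate.} The trace on $\widetilde\Gamma_2$ is controlled through the representation \eqref{SDLR}. Lemmas \ref{SLPE} and \ref{DLPE}, combined with \eqref{DtNError} applied to $\bm\xi=\bm u|_{\Gamma_1}$, give
\begin{equation*}
\|\widetilde{\bm U}\|_{\bm H^{1/2}(\widetilde\Gamma_2)}\le C\max\{k_{\rm p},k_{\rm s}^{-1/2},k_{\rm s}^{7/2}\}\,\{\operatorname{dist}(\widetilde\Gamma_2;\Gamma_1)\}^{-1/2}\|\bm u\|_{\bm H^{1/2}(\Gamma_1)} .
\end{equation*}
The geometry of the map then supplies the decay. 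Along each ray, $\tau(b)-a=a\big(e^{\tau_0(b-a)}-1\big)$, so
\begin{equation*}
\operatorname{dist}(\widetilde\Gamma_2;\Gamma_1)\ge c\,e^{\tau_0\operatorname{dist}(\Gamma_b,\Gamma_1)}
\end{equation*}
for a star-shaped $\Gamma_1$ with $a(\theta)$ bounded below, at least once $\tau_0\operatorname{dist}(\Gamma_b,\Gamma_1)$ is large enough. Raising this to the power $-1/2$ produces the factor $e^{-\frac12\tau_0\operatorname{dist}(\Gamma_b,\Gamma_1)}$, which proves \eqref{FError}. For small $\tau_0\operatorname{dist}(\Gamma_b,\Gamma_1)$ the bound follows directly from stability, after enlarging $C$.

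\textbf{Main obstacle.} The delicate step is keeping the stability constant of the truncated Dirichlet problem under control. The domain $\widetilde\Omega_b$ grows exponentially with $\tau_0$, and $C$ must not absorb the exponential gain. I would handle this by writing the error through the exterior decomposition: extend by the radiating field and estimate only the correction with Dirichlet data on $\widetilde\Gamma_2$. If that is not enough, I would fall back on the interpretation that $C$ may depend on $\widetilde\Omega_b$ only through the eigenvalue gap assumed in the hypothesis.
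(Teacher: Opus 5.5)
\textbf{A genuine gap, shared with the paper, and it cannot be closed.} Your outline is the paper's proof step for step:
\begin{itemize}
\item pull back to the Dirichlet problem \eqref{OCEq1} on $\widetilde\Omega_b$;
\item write the error as the solution with datum $\bm q=\mathcal D(\bm\xi)-\mathcal S(\bm\lambda)$ on $\widetilde\Gamma_2$;
\item bound $\bm q$ by Lemmas \ref{SLPE}, \ref{DLPE} and \eqref{DtNError};
\item turn the distance into $e^{\tau_0\operatorname{dist}(\Gamma_b,\Gamma_1)}$. Your $a(e^{\tau_0(b-a)}-1)$ is more accurate than \eqref{distE}.
\end{itemize}
The step you call the main obstacle is where the argument breaks. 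The paper does not close it either: \eqref{IDUE} lets the constant depend on $\widetilde\Omega_b$, and \eqref{ErrE1} then uses it as if it were uniform in $\tau_0$ and $d$.

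For real $\omega>0$ the uniform estimate is false, so neither of your remedies can work. Your own reduction shows why. The map is real, so the RCL problem is the Navier equation on the bounded domain $\widetilde\Omega_b$ with a homogeneous Dirichlet condition on $\widetilde\Gamma_2$. That is a perfectly reflecting wall at distance $R\approx ae^{\tau_0 d}$, with zero net flux through it. The reflected wave therefore carries the same flux as the outgoing one and returns to $\Omega_1$ with $O(1)$ amplitude. Consequently the stability constant from (weighted) $\bm H^{1/2}(\widetilde\Gamma_2)$ to $\bm H^1(\Omega_1)$ is at least of order $R^{1/2}$. This exactly cancels the gain $\{\operatorname{dist}(\widetilde\Gamma_2;\Gamma_1)\}^{-1/2}$.

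\textbf{Explicit counterexample.} Take $\Omega=B_{a_0}$, $\theta\equiv1$, $\bm f=\bm 0$, $\bm g=\bm e_r$, $\Gamma_1=\{r=a\}$, and let $R=ae^{\tau_0 d}$ be the radius of $\widetilde\Gamma_2$. The exact solution is the pure P-wave $\bm u=\tfrac{H_1^{(1)}(k_{\rm p}r)}{H_1^{(1)}(k_{\rm p}a_0)}\bm e_r$. The error in $\Omega_1$ is $E(r)\bm e_r$ with
\[
E(r)=\frac{H_1^{(1)}(k_{\rm p}R)}{H_1^{(1)}(k_{\rm p}a_0)}\,\frac{N(r)}{N(R)},\qquad N(r)=J_1(k_{\rm p}r)Y_1(k_{\rm p}a_0)-Y_1(k_{\rm p}r)J_1(k_{\rm p}a_0).
\]
By Cauchy--Schwarz, $|N(R)|\le|H_1^{(1)}(k_{\rm p}R)|\,|H_1^{(1)}(k_{\rm p}a_0)|$. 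Hence $|E(r)|\ge|N(r)|/|H_1^{(1)}(k_{\rm p}a_0)|^2$ for every non-resonant $R$. Since $N(a_0)=0$ and $N'(a_0)=-2/(\pi a_0)\neq0$, $N$ is not identically zero on $(a_0,a)$. So the left-hand side of the bound in Theorem \ref{UE} stays above a positive constant independent of $\tau_0$ and $d$, and blows up near Dirichlet eigenvalues of $\widetilde\Omega_b$. The right-hand side, by contrast, tends to zero.

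\textbf{Why your remedies fail.} The first one (extend by the radiating field and estimate the Dirichlet correction) only re-describes $\bm W$. The second fails because the Dirichlet spectrum of $\widetilde\Omega_b$ becomes dense as $R$ grows, and even away from it the constant is $\gtrsim R^{1/2}$.

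A provable statement needs one of two things:
\begin{itemize}
\item damping, e.g. imaginary $\omega$, or complex $\omega$ with positive imaginary part;
\item an analysis at the discrete level. There the phase-extracted ansatz $w_{\rm t}\times$polynomials cannot represent the reflected component, which corresponds to $\hat v_{\rm t}\sim e^{-2{\rm i}k_{\rm t}(\tau-a)}$. This is presumably why the computations are accurate.
\end{itemize}

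\textbf{A smaller point.} On $\Gamma_1$ you have $\tau_r=\tau_0 a\neq1$, so the map is only Lipschitz there. The transmission conditions still carry over, but not because $\tau_r=1$. They carry over because the sesquilinear form is invariant under the change of variables: the layer-side conormal traction built from $\mathbb A$ and $J\mathbb J^{-1}$ is the pullback of the physical traction in $\tilde{\bm x}$.
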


\begin{proof}
	Mapping \eqref{PMLeq2} back to the $(\tilde x_1,\tilde x_2)$ coordinates through \eqref{RCL}, we can obtain that
	\begin{equation}
	\label{OCEq1}
	\begin{dcases}
	\Delta_{\tilde{\bm x}}^*\widehat{\bm U}(\tilde{\bm x})+\rho\omega^2\theta(\tilde{\bm x})\widehat{\bm U}(\tilde{\bm x})=\bm f(\bm x), \quad& \tilde{\bm x}\in \widetilde\Omega_b, \\
	\widehat{\bm U}(\tilde{\bm x})=\bm g(\tilde{\bm x}), \quad& \tilde{\bm x}\in \Gamma, \\
	\widehat{\bm U}(\tilde{\bm x})=0,\quad& \tilde{\bm x}\in\widetilde\Gamma_2,
	\end{dcases}
	\end{equation}
	where $\widehat{\bm U}(\tau, \theta)=\hat{\bm u}(r, \theta)$. Assuming that $-\rho\omega^2$ is not an eigenvalue of the interior Dirichlet problem for the Lam\'e operator $\Delta^*$, it can be seen from \cite{KGBB79} that \eqref{OCEq1} has a unique solution $\widehat{\bm U}$ such that
	\begin{equation}
	\label{IDUE}
	\|\widehat{\bm U}\|_{\bm H^1(\widetilde\Omega_b)}\leq C(\|\bm g\|_{\bm H^{1/2}(\Gamma)}+\|\bm f\|_{H^1(\Omega_1)'}),
	\end{equation}
	where the positive constant $C$ depends on $k_{\rm t}$ and $\widetilde\Omega_b$.
	
	From \eqref{SDLR}, we know that the solution of the original scattering problem \eqref{PMLOeq} on $\widetilde\Gamma_2$ can be represented as 
	\begin{equation}
	\label{SDU2}
	\bm q(\tilde{\bm x})=\mathcal D(\bm\xi)(\tilde{\bm x})-\mathcal S(\bm\lambda)(\tilde{\bm x}),\quad\tilde{\bm x}\in \widetilde\Gamma_2,
	\end{equation}
	where
	\begin{equation*}
	\bm \xi:=\widetilde{\bm U}|_{\Gamma_1}=\tilde{\bm u}|_{\Gamma_1}={\bm u}|_{\Gamma_1},\quad \bm\lambda=\bm T(\partial,\bm\nu)\bm \xi.
	\end{equation*}
	Then the original problem \eqref{PMLOeq} enclosed by $\widetilde\Gamma_2$ becomes 
	\begin{equation}
	\label{OCEq2}
	\begin{dcases}
	\Delta_{\tilde{\bm x}}^*\widetilde{\bm U}(\tilde{\bm x})+\rho\omega^2\theta(\tilde{\bm x})\widetilde{\bm U}(\tilde{\bm x})=\bm f(\bm x), \quad  & \tilde{\bm x}\in \widetilde\Omega_b, \\
	\widetilde{\bm U}(\tilde{\bm x})=\bm g(\tilde{\bm x}), \quad  & \tilde{\bm x}\in \Gamma, \\
	\widetilde{\bm U}(\tilde{\bm x})=\bm q(\tilde{\bm x}),\quad & \tilde{\bm x}\in\widetilde\Gamma_2.
	\end{dcases}
	\end{equation}
	Introducing ${\bm e}=\widetilde{\bm U}-\widehat{\bm U}$ and subtracting \eqref{OCEq1} from \eqref{OCEq2} yields the following error equation
	\begin{equation}
	\label{ErrorEq1}
	\begin{dcases}
	\Delta_{\tilde{\bm x}}^*{\bm e}(\tilde{\bm x})+\rho\omega^2\theta(\tilde{\bm x}){\bm e}(\tilde{\bm x})=\bm 0, \quad &\tilde{\bm x}\in \widetilde\Omega_b, \\
	{\bm e}(\tilde{\bm x})=\bm 0, \quad &\tilde{\bm x}\in \Gamma, \\
	{\bm e}(\tilde{\bm x})=\bm q(\tilde{\bm x}),\quad &\tilde{\bm x}\in\widetilde\Gamma_2.
	\end{dcases}
	\end{equation}
	Consequently, the regularity result \eqref{IDUE} implies
	\begin{equation}
	\label{ErrE1}
	\|{\bm e}\|_{H^1(\widetilde\Omega_b)}\leq C\|\bm q\|_{H^{1/2}(\widetilde\Gamma_2)}.
	\end{equation}
	Therefore, from Lemma \eqref{SLPE}, Lemma \eqref{DLPE} and \eqref{DtNError}, we have
	\begin{equation}
	\label{FError2}
	\|\bm q\|_{H^{1/2}(\widetilde\Gamma_2)}\leq C\max\{k_{\rm p},k_{\rm s}^{-1/2},k_{\rm s}^{7/2}\}\{{\operatorname{dist}}(\widetilde\Gamma_2;\Gamma)\}^{-1/2}\|\bm \xi\|_{\bm H^{1/2}(\Gamma_1)},
	\end{equation}
	when $k_{\rm t}\operatorname{dist}(\widetilde\Gamma_2;\Gamma_1)>1$. Applying the definition of the mapping $\tau(r,\theta)$ in \eqref{RCL} gives
	\begin{equation}
	\label{distE}
	\operatorname{dist}(\widetilde\Gamma_2;\Gamma_1)=a(\theta)e^{\tau_0\operatorname{dist}(\Gamma_2,\Gamma_1)}.
	\end{equation}
	In view of \eqref{ErrE1}-\eqref{distE}, we can obtain that
	\begin{equation}
	\label{FError}
	\|\bm e\|_{H^1(\tilde\Omega_b)}\leq C\max\{k_{\rm p},k_{\rm s}^{-1/2},k_{\rm s}^{7/2}\}e^{-\frac{1}{2}\tau_0\operatorname{dist}(\Gamma_b,\Gamma)}\|\widetilde{\bm U}\|_{\bm H^{1/2}(\Gamma_1)}.
	\end{equation}
	Since $\tau(r,\theta)$ reduces to the identity mapping $\Omega_1$, we have $\bm u=\tilde{\bm u}=\widetilde{\bm U}$ and $\hat{\bm u}=\widehat{\bm U}$ in $\Omega_1$. In view of \eqref{ErrE1}-\eqref{distE}, we conclude that
	\begin{equation}
	\label{FError1}
	\begin{aligned}
	&\|\bm u-\hat{\bm u}\|_{H^1(\Omega_1)}=\|\widetilde{\bm U}-\widehat{\bm U}\|_{\bm H^1(\Omega_1)}\leq\|{\bm e}\|_{\bm H^1(\widetilde\Omega_b)}\\
	&\leq C\max\{k_{\rm p},k_{\rm s}^{-1/2},k_{\rm s}^{7/2}\}e^{-\frac{1}{2}\tau_0\operatorname{dist}(\Gamma_b,\Gamma)}\|{\bm u}\|_{\bm H^{1/2}(\Gamma_1)}.
	\end{aligned}
	\end{equation}
	This ends the proof.
\end{proof}

\section{Numerical implementation}
\label{sec:4}

In this section, we derive a weak formulation of the coupled RCL problem \eqref{OP2} and develop its high-order spectral element discretization. The weak formulation is established in terms of the displacement field in the physical domain $\Omega_1$ and the transformed compressional and shear potentials in the RCL domain $\Omega_2$, coupled through the interface conditions on $\Gamma_1$.

We define the space
\begin{equation*}
\bm H_{0, \Gamma}^1(\Omega_1)=\{\bm u \in \bm H^1(\Omega_1):\left.\bm u\right|_{\Gamma}=\bm 0\},\quad\bm Y=\bm H_{0, \Gamma}^1(\Omega_1)\times H^1(\Omega_2)\times H^1(\Omega_2).
\end{equation*}
The weak formulation of \eqref{OP2} is to find $(\hat{\bm u},\hat\phi_{\rm p},\hat \phi_{\rm s}) \in\bm Y$ such that 

\begin{equation}
\label{WF}
\begin{aligned}
&\mathcal A_{\Omega_1}(\hat{\bm u},\hat{\bm\varphi})+\mathcal A_{\Gamma_1}(\hat\phi_p,\hat\phi_s,\hat{\bm\varphi})=-(\bm f,\hat{\bm\varphi})_{\Omega_1}, \\[0.2em]
&\mathcal A_{\Omega_2}(\hat\phi_{\rm p}, \hat\phi_{\rm s} ;\hat\psi_{\rm p}, \hat\psi_{\rm s})+\mathcal A_{\Gamma_2}(\hat{\bm u},\hat\phi_p,\hat\phi_s,\hat\psi_p,\hat\psi_s)=0,\quad \quad \forall(\hat{\bm\varphi},\hat\psi_{\rm p},\hat\psi_{\rm s}) \in \bm Y,
\end{aligned}
\end{equation}
where
\begin{align*}
&\mathcal A_{\Omega_1}(\hat{\bm u},\hat{\bm\varphi})=\tfrac{\mu}{2} (\nabla \hat{\bm{u}}+\nabla \hat{\bm{u}}^\top,\nabla\hat{\bm\varphi}+\nabla\hat{\bm\varphi}^\top)_{\Omega_1}+\lambda(\nabla \cdot \hat{\bm{u}},\nabla \cdot \hat{\bm\varphi})_{\Omega_1}-\rho\omega^2(\theta\hat{\bm u},\hat{\bm\varphi})_{\Omega_1}\\[0.2em]
&\mathcal A_{\Gamma_1}(\hat\phi_p,\hat\phi_s,\hat{\bm\varphi})=\\
&\lambda k_{\rm p}^2\langle\hat\phi_{\rm p}\bm\nu,\hat{\bm\varphi}\rangle_{\Gamma_1}+\mu k_{\rm s}^2\langle\hat\phi_{\rm s}\bm\tau,\hat{\bm\varphi}\rangle_{\Gamma_1}+2\mu(\langle\nabla \hat\phi_{\rm p},\partial_{\bm\nu}\hat{\bm\varphi}\rangle_{\Gamma_1}+\langle\nabla\hat\phi_{\rm s},\partial_{\bm\tau}\hat{\bm\varphi}\rangle_{\Gamma_1})\\[0.2em]
&\mathcal A_{\Omega_2}(\hat\phi_{\rm p}, \hat\phi_{\rm s} ;\hat\psi_{\rm p}, \hat\psi_{\rm s})=\\&(\mathbb A\nabla\hat\phi_{\rm p}, \nabla(\tfrac{r}{\tau}\hat\psi_{\rm p}))_{\Omega_2}+(\mathbb A\nabla\hat\phi_{\rm s}, \nabla(\tfrac{r}{\tau}\hat\psi_{\rm s}))_{\Omega_2} -k_{\rm p}^2(\tau_r\hat\phi_{\rm p}, \hat\psi_{\rm p})_{\Omega_2}-k_{\rm s}^2(\tau_r\hat\phi_{\rm s}, \hat\psi_{\rm s})_{\Omega_2},\\[0.2em]
&\mathcal A_{\Gamma_2}(\hat{\bm u},\hat\phi_p,\hat\phi_s,\hat\psi_p,\hat\psi_s)=-\langle\partial_\tau \hat\phi_{\rm s},\hat\psi_{\rm p}\rangle_{\Gamma_1}+\langle\partial_\tau\hat\phi_{\rm p},\hat\psi_{\rm s}\rangle_{\Gamma_1}+\langle \bm\nu\cdot\hat{\bm u},\hat\psi_{\rm p}\rangle_{\Gamma_1}\\
&-\langle \bm\tau\cdot\hat{\bm u},\hat\psi_{\rm s}\rangle_{\Gamma_1}+\langle\tilde\partial_\tau \hat\phi_{\rm s},\tfrac{r}{\tau}\hat\psi_{\rm p}\rangle_{\Gamma_2}-\langle\tilde\partial_\tau\hat\phi_{\rm p},\tfrac{r}{\tau}\hat\psi_{\rm s}\rangle_{\Gamma_2}
\end{align*}
Here, for scalar fields $p$ and $q$, vector fields $\bm P$ and $\bm Q$, and matrices $\mathbb P$ and $\mathbb Q$, $()_{\Omega_i}$ denotes the inner product as
\begin{equation*}(\mathbb P,\mathbb Q)_{\Omega_1}=\int_{\Omega_1}\mathbb P:\overline{\mathbb Q}d\bm x, \quad (\bm P,\bm Q)_{\Omega_1}=\int_{\Omega_1}\bm P\cdot\overline{\bm Q}d\bm x,\quad (p,q)_{\Omega_2}=\int_{\Omega_2}p\bar{q}d\bm x,
\end{equation*}
and $\langle \cdot, \cdot \rangle$ represents the duality pairing
\begin{equation*}
\left\langle \bm P,\bm Q\right\rangle_{\Gamma}=\int_\Gamma \bm P\cdot\overline{\bm Q} ds,\quad \left\langle p,q\right\rangle_{\Gamma}=\int_\Gamma p\overline q ds.
\end{equation*}

As shown in \eqref{psR2}, the fields $\hat\phi_{\rm t}$, $\rm t=p,s$ are highly oscillatory near $\Gamma_1$. To eliminate oscillations, we introduce the substitution $\hat\phi_{\rm t}=w_{\rm t}\hat v_{\rm t}$ with
\begin{equation}
\label{WR}
w_{\rm t}(r):=\begin{cases}
1, &r\in\Omega_1,\\
e^{{\rm i}k_{\rm t}(\tau(r,\theta)-a(\theta))},&r\in\Omega_2,
\end{cases}
\end{equation}
where $\hat{v}_{\rm t}$ are free of oscillation. Then we present a detailed expansion of the transformed sesquilinear form. Specifically, the problem \eqref{WF} is reformulated as follows: find $(\hat{\bm u},\hat v_{\rm p}, \hat v_{\rm s}) \in\bm Y$ such that:
\begin{equation}
\label{WF1}
\begin{aligned}
&\mathcal A_{\Omega_1}(\hat{\bm u},\hat{\bm\varphi})+\mathcal A_{\Gamma_1}(w_p\hat\phi_p,w_s\hat\phi_s,\hat{\bm\varphi})=-(\bm f,\hat{\bm\varphi})_{\Omega_1}, \\[0.2em]
&\breve{\mathcal A}_{\Omega_2}(\hat v_{\rm p}, \hat v_{\rm s};\hat\zeta_{\rm p},\hat\zeta_{\rm s})+\mathcal A_{\Gamma_2}(\hat{\bm u},w_p\hat v_p,w_s\hat v_s,\tfrac{r}{\tau}w_p\hat\zeta_p,\tfrac{r}{\tau}w_s\hat\zeta_s)=0
\end{aligned}
\end{equation}
for all $(\hat{\bm\varphi},\hat\zeta_{\rm p},\hat\zeta_{\rm s}) \in \bm Y$, where
\begin{equation}
\label{A2}
\begin{aligned}
&\breve{\mathcal A}_{\Omega_2}(\hat v_{\rm p}, \hat v_{\rm s};\hat\zeta_{\rm p},\hat\zeta_{\rm s})=(\mathbb A\nabla(w_p\hat v_{\rm p}), \nabla (\tfrac{r}{\tau}w_p\hat\zeta_{\rm p}))_{\Omega_2}\\
&+(\mathbb A\nabla(w_s\hat v_{\rm s}), \nabla (\tfrac{r}{\tau}w_s\hat\zeta_{\rm s}))_{\Omega_2} -k_{\rm p}^2(\tau_r\hat v_{\rm p}, \hat\zeta_{\rm p})_{\Omega_2}-k_{\rm s}^2(\tau_r\hat v_{\rm s}, \hat\zeta_{\rm s})_{\Omega_2}.
\end{aligned}
\end{equation}

\begin{theorem}
	\label{Th4.1}
	The sesquilinear form $\breve{\mathcal A}_{\Omega_2}(\hat v_{\rm p}, \hat v_{\rm s};\hat\zeta_{\rm p},\hat\zeta_{\rm s})$ in \eqref{A2} can be rewritten as
	\begin{equation}
	\breve{\mathcal A}_{\Omega_2}(\hat v_{\rm p}, \hat v_{\rm s};\hat\zeta_{\rm p},\hat\zeta_{\rm s})=\breve{\mathcal A}^{\rm p}_{\Omega_2}(\hat v_{\rm p};\hat\zeta_{\rm p})+\breve{\mathcal A}^{\rm s}_{\Omega_2}(\hat v_{\rm s};\hat\zeta_{\rm s}),
	\end{equation}
	where
	\begin{equation}
	\label{At2}
	\breve{\mathcal A}^{\rm t}_{\Omega_2}(\hat v_{\rm p};\hat\zeta_{\rm p})=(\tfrac{r}{\tau}\mathbb A{\nabla} \hat v_{\rm t}, {\nabla} \hat\zeta_{\rm t})+(\mathbb R_\theta\bm{p} \cdot {\nabla}\hat v_{\rm t}, \hat\zeta_{\rm t})+(\hat v_{\rm t}, \mathbb R_\theta\bm{q}^* \cdot {\nabla} \hat\zeta_{\rm t})+(\breve{n} \hat v_{\rm t}, \hat\zeta_{\rm t}),
	\end{equation}
	with 
	\begin{equation}
	\begin{aligned}
	& \bm{p}:=w_t \mathbb B \widehat{\nabla}(\tfrac{rw_{\rm t}^*}{\tau} ), \quad \bm{q}:=\tfrac{rw_t^*}{\tau} \mathbb B \widehat{\nabla} w_t, \quad \breve{n}:=(\widehat{\nabla}w_{\rm t})^\top \mathbb B \widehat{\nabla}(\tfrac{rw_{\rm t}^*}{\tau} )-k_{\rm t}^2\tau_r.
	\end{aligned}
	\end{equation}
	Furthermore, in $\Omega_2$, the terms $\mathbb B$, $\bm p$, $\bm q$ and $\breve n$ can be evaluated by
	\begin{equation*}
	\mathbb B=\begin{bmatrix}
	\tfrac{\tau}{r\tau_r}+\tfrac{\tau_\theta^2}{r\tau_r\tau}&-\tfrac{\tau_\theta}{\tau}\\[4pt]
	-\tfrac{\tau_\theta}{\tau}&\tfrac{r\tau_r}{\tau}.
	\end{bmatrix}
	\end{equation*}
	The terms $\bm p$, $\bm q$ and $\breve n$ can be evaluated by
	\begin{equation*}
	\begin{aligned}
	&p_1=\tfrac{1}{r\tau_r}-\tfrac{1}{\tau}-{\rm i}k_{\rm t}-\tfrac{{\rm i}k_{\rm t}\tau_\theta a_\theta}{\tau^2}+\tfrac{\tau_\theta^2}{r\tau_r\tau^2},\quad p_2=-\tfrac{\tau_\theta}{\tau^2}+\tfrac{{\rm i}k_{\rm t}r\tau_ra_\theta}{\tau^2},\\
	&q_1={\rm i}k_{\rm t}+\tfrac{{\rm i}k_{\rm t}a_\theta\tau_\theta}{\tau^2},\quad q_2=-\tfrac{{\rm i}k_{\rm t}r\tau_ra_\theta}{\tau^2},\quad \breve n={\rm i}k_{\rm t}(\tfrac{1}{r}-\tfrac{\tau_r}{\tau}+\tfrac{\tau_\theta a_\theta}{\tau^2 r}-\tfrac{{\rm i}k_{\rm t}\tau_r a_\theta^2}{\tau^2}).
	\end{aligned}
	\end{equation*}
	
\end{theorem}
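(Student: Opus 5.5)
The proof is a direct computation, organized in four steps.

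\textbf{Step 1: the form decouples.} By \eqref{A2}, $\breve{\mathcal A}_{\Omega_2}$ is a sum of terms, each involving only the pair $(\hat v_{\rm p},\hat\zeta_{\rm p})$ or only $(\hat v_{\rm s},\hat\zeta_{\rm s})$. It therefore suffices to treat one generic ${\rm t}$ and set
\begin{equation*}
\breve{\mathcal A}^{\rm t}_{\Omega_2}(\hat v_{\rm t};\hat\zeta_{\rm t})=(\mathbb A\nabla(w_{\rm t}\hat v_{\rm t}),\nabla(\tfrac{r}{\tau}w_{\rm t}\hat\zeta_{\rm t}))_{\Omega_2}-k_{\rm t}^2(\tau_r\hat v_{\rm t},\hat\zeta_{\rm t})_{\Omega_2}.
\end{equation*}

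\textbf{Step 2: product rule and the identity $|w_{\rm t}|=1$.} Expand both gradients with the product rule. Since $k_{\rm t}$ and $\tau$ are real, $w_{\rm t}\overline{w_{\rm t}}=1$. The conjugation in the inner product turns $\nabla(\tfrac{r}{\tau}w_{\rm t}\hat\zeta_{\rm t})$ into
\begin{equation*}
\tfrac{r}{\tau}\,w_{\rm t}^*\,\nabla\overline{\hat\zeta_{\rm t}}+\overline{\hat\zeta_{\rm t}}\,\nabla(\tfrac{r}{\tau}w_{\rm t}^*).
\end{equation*}
Multiplying out, the integrand $\mathbb A\nabla(w_{\rm t}\hat v_{\rm t})\cdot\overline{\nabla(\tfrac{r}{\tau}w_{\rm t}\hat\zeta_{\rm t})}$ splits into four pieces:
\begin{itemize}
\item $\tfrac{r}{\tau}\,\mathbb A\nabla\hat v_{\rm t}\cdot\nabla\overline{\hat\zeta_{\rm t}}$, using $w_{\rm t}w_{\rm t}^*=1$;
\item $w_{\rm t}\,\mathbb A\nabla\hat v_{\rm t}\cdot\nabla(\tfrac{r}{\tau}w_{\rm t}^*)\,\overline{\hat\zeta_{\rm t}}$;
\item $\hat v_{\rm t}\,\tfrac{r}{\tau}w_{\rm t}^*\,\mathbb A\nabla w_{\rm t}\cdot\nabla\overline{\hat\zeta_{\rm t}}$;
\item $\hat v_{\rm t}\overline{\hat\zeta_{\rm t}}\,\nabla w_{\rm t}\cdot\mathbb A\nabla(\tfrac{r}{\tau}w_{\rm t}^*)$.
\end{itemize}
The mass term $-k_{\rm t}^2\tau_r$ is unchanged, again because $|w_{\rm t}|=1$.

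\textbf{Step 3: change to polar gradients.} Use \eqref{Hna}, $\nabla=\mathbb R_\theta\widehat\nabla$, and write $\mathbb A=\mathbb R_\theta\,\mathbb B\,\mathbb R_\theta^\top$. Then for scalars $f,g$,
\begin{equation*}
\mathbb A\nabla f\cdot\nabla g=(\widehat\nabla f)^\top\mathbb B\,\widehat\nabla g,\qquad \mathbb A\nabla\hat v\cdot\nabla g=\nabla\hat v\cdot\mathbb R_\theta\mathbb B\widehat\nabla g,
\end{equation*}
where $\mathbb A$ is symmetric. With these, the second piece becomes $(\mathbb R_\theta\bm p\cdot\nabla\hat v_{\rm t},\hat\zeta_{\rm t})$, the third becomes $(\hat v_{\rm t},\mathbb R_\theta\bm q^*\cdot\nabla\hat\zeta_{\rm t})$ (conjugating to place it in the second slot), and the fourth plus the mass term gives $\breve n$.

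To compute $\mathbb B$, start from $\mathbb A=J\mathbb J^{-1}\mathbb J^{-\top}$ with $J=\tau\tau_r/r$ and the stated form of $\mathbb J^{-1}$. Multiply the middle matrix by its transpose and by $J$. The $(1,1)$ entry is
\begin{equation*}
\tfrac{\tau\tau_r}{r}\Big(\tfrac{1}{\tau_r^2}+\tfrac{\tau_\theta^2}{\tau_r^2\tau^2}\Big)=\tfrac{\tau}{r\tau_r}+\tfrac{\tau_\theta^2}{r\tau_r\tau}.
\end{equation*}
The off-diagonal entry is $-\tau_\theta/\tau$ and the $(2,2)$ entry is $r\tau_r/\tau$, which matches the stated $\mathbb B$.

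\textbf{Step 4: explicit entries.} In $\Omega_2$ we have $\tau=a e^{\tau_0(r-a)}$, so $\tau_r=\tau_0\tau$ and $\tau_\theta=a_\theta(\tau/a-\tau_0\tau)$. We only need
\begin{equation*}
\widehat\nabla w_{\rm t}={\rm i}k_{\rm t}w_{\rm t}\big(\tau_r,\ \tfrac{\tau_\theta-a_\theta}{r}\big)^\top,\qquad \widehat\nabla\big(\tfrac{r}{\tau}w_{\rm t}^*\big)=w_{\rm t}^*\Big[\widehat\nabla\big(\tfrac{r}{\tau}\big)-{\rm i}k_{\rm t}\tfrac{r}{\tau}\big(\tau_r,\tfrac{\tau_\theta-a_\theta}{r}\big)^\top\Big],
\end{equation*}
with $\widehat\nabla(r/\tau)=\big(\tfrac1\tau-\tfrac{r\tau_r}{\tau^2},\,-\tfrac{\tau_\theta}{\tau^2}\big)$. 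Multiply these by $\mathbb B$ and collect terms. In the products, the $\tau_\theta$ contributions partially cancel against the $-\tau_\theta/\tau$ off-diagonal entries of $\mathbb B$, which should leave only the $a_\theta$ terms shown in $p_1,p_2,q_1,q_2$.

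For $\breve n$, the quadratic ${\rm i}k_{\rm t}\cdot(-{\rm i}k_{\rm t})$ contributions should combine as follows: $k_{\rm t}^2\tau_r$ from the $(1,1)$ block cancels the mass term $-k_{\rm t}^2\tau_r$, and what remains is $-k_{\rm t}^2\tau_r a_\theta^2/\tau^2$. The linear terms give ${\rm i}k_{\rm t}(1/r-\tau_r/\tau+\dots)$.

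\textbf{Main obstacle.} The hard part is this final bookkeeping: confirming the exact cancellations in $\breve n$ and in the $\tau_\theta$ terms. I would check it first in the circular case $a_\theta=\tau_\theta=0$, where the formulas reduce to $p_1=\tfrac{1}{r\tau_r}-\tfrac1\tau-{\rm i}k_{\rm t}$, $q_1={\rm i}k_{\rm t}$, and $\breve n={\rm i}k_{\rm t}(1/r-\tau_r/\tau)$. After that I would track the $\theta$-dependent terms separately.
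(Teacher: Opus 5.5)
Your proposal follows essentially the same route as the paper's proof. Both use the product rule with $w_{\rm t}w_{\rm t}^*=1$, then $\nabla=\mathbb R_\theta\widehat\nabla$ and $\mathbb A=\mathbb R_\theta\mathbb B\mathbb R_\theta^\top$ to get the four terms, and finally evaluate $\bm p,\bm q,\breve n$ directly from $\tau_r=\tau_0\tau$, $\tau_\theta$ and the derivatives of $w_{\rm t}$. Carrying out your Step 4 does reproduce all the stated formulas.

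Two small corrections to the bookkeeping you anticipated. First, the $k_{\rm t}^2$-terms in $\breve n$ sum to $k_{\rm t}^2\tau_r(1+a_\theta^2/\tau^2)$. After subtracting the mass term, what remains is therefore $+k_{\rm t}^2\tau_r a_\theta^2/\tau^2$, which is exactly ${\rm i}k_{\rm t}\cdot(-{\rm i}k_{\rm t}\tau_r a_\theta^2/\tau^2)$ in the statement, not $-k_{\rm t}^2\tau_r a_\theta^2/\tau^2$. Second, only the $k_{\rm t}$-dependent $\tau_\theta$-terms collapse to $a_\theta$-terms. The terms $\tfrac{\tau_\theta^2}{r\tau_r\tau^2}$ in $p_1$ and $-\tfrac{\tau_\theta}{\tau^2}$ in $p_2$ survive from $\mathbb B\widehat\nabla(r/\tau)$.
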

\begin{proof}
	
	According to \eqref{Hna}, we have
	\begin{equation*}
	\begin{aligned}
	&(\mathbb A\nabla(w_{\rm t}\hat v_{\rm t}))^\top \nabla(\tfrac{rw^*_{\rm t}}{\tau}\hat \zeta^*_{\rm t})=(w_t\widehat\nabla \hat v_t+\hat v_t\widehat\nabla w_t)^\top\mathbb B(\tfrac{rw^*_t}{\tau}\widehat\nabla \hat \zeta^*_t+\hat \zeta^*_t\widehat\nabla \tfrac{rw^*_t}{\tau})=(\tfrac{r}{\tau}\mathbb B\widehat\nabla \hat v_t)^\top\widehat\nabla \hat \zeta^*_t\\[0.2em]
	&+(\widehat\nabla\hat v_t)^\top w_t\mathbb B\widehat\nabla(\tfrac{rw_t^*}{\tau})\hat \zeta^*_t+\hat v_t(\tfrac{rw_t^*}{\tau}\mathbb B\widehat \nabla w_t)^\top\widehat\nabla\hat\zeta_t^*+\hat v_t(\widehat\nabla w_t)^\top\mathbb B\widehat\nabla(\tfrac{r}{\tau} w_t^*)\hat\zeta_t^*\\[0.2em]
	&=(\mathbb B\widehat\nabla v_t,\widehat\nabla\hat\zeta_t)_{\Omega_2}+(\bm p\cdot\widehat\nabla v_t,\hat\zeta_t)_{\Omega_2}+( v_t,\bm q^*\cdot\widehat\nabla\hat\zeta_t)_{\Omega_2}+((\widehat\nabla w_t)^\top\mathbb B\widehat\nabla\tfrac{r w^*_t}{\tau}\hat v_t,\hat\zeta_t)_{\Omega_2}.
	\end{aligned}
	\end{equation*}
	Using the property $\widehat{\nabla}=\mathbb R_\theta^t \nabla $, we can complete the proof of \eqref{At2}. A direct calculation yields that
	\begin{equation}
	\begin{aligned}
	\label{gradw}
	&\tau_r=\tau_0\tau,\quad \tau_\theta=\Big[\tfrac{a_\theta}{a}-\tau_0a_\theta\Big]\tau,\quad \tfrac{dw_t}{dr}={\rm i}k_{\rm t}\tau_rw_t,\quad \tfrac{dw_t}{d\theta}={\rm i}k_{\rm t}(\tau_\theta-a_\theta)w_t,\\
	&\tfrac{dw^*_t}{dr}=-{\rm i}k_{\rm t}\tau_rw^*_t,\quad \tfrac{dw^*_t}{d\theta}=-{\rm i}k_{\rm t}(\tau_\theta-a_\theta)w^*_t.
	\end{aligned}
	\end{equation}
	Then can obtain that
	\begin{equation*}
	\bm p=w_t\mathbb B\begin{pmatrix}
	\tfrac{r}{\tau}\tfrac{\partial w_t^*}{\partial r}+w_t^*\tfrac{\partial}{\partial r}(\tfrac{r}{\tau})\\[4pt]
	\tfrac{1}{r}(\tfrac{r}{\tau}\tfrac{\partial w_t^*}{\partial \theta}+w_t^*\tfrac{\partial}{\partial \theta}(\tfrac{r}{\tau}))
	\end{pmatrix}=\mathbb B\begin{pmatrix}
	-\tfrac{{\rm i}k_{\rm t}r\tau_r}{\tau}+(\tfrac{1}{\tau}-\tfrac{r\tau_r}{\tau^2})\\[4pt]
	-\tfrac{{\rm i}k_{\rm t}(\tau_\theta-a_\theta)}{\tau}-\tfrac{r\tau_\theta}{\tau^2}
	\end{pmatrix},
	\end{equation*}
	which yields that
	\begin{equation*}
	p_1=\tfrac{1}{r\tau_r}-\tfrac{1}{\tau}-{\rm i}k_{\rm t}-\tfrac{{\rm i}k_{\rm t}\tau_\theta a_\theta}{\tau^2}+\tfrac{\tau_\theta^2}{r\tau_r\tau^2},\quad p_2=-\tfrac{\tau_\theta}{\tau^2}+\tfrac{{\rm i}k_{\rm t}r\tau_ra_\theta}{\tau^2}.
	\end{equation*}
	For the term $\bm q$, we have
	\begin{equation*}
	\bm{q}:=\tfrac{rw_t^*}{\tau} \mathbb B \widehat{\nabla} w_t=\tfrac{r}{\tau}\mathbb B\begin{pmatrix}
	{\rm i}k_{\rm t}\tau_r\\[4pt]
	\tfrac{{\rm i}k_{\rm t}(\tau_\theta-a_\theta)}{r}
	\end{pmatrix}=\begin{pmatrix}
	{\rm i}k_{\rm t}+\tfrac{{\rm i}k_{\rm t}a_\theta\tau_\theta}{\tau^2}\\[4pt]
	-\tfrac{{\rm i}k_{\rm t}r\tau_ra_\theta}{\tau^2}
	\end{pmatrix}
	\end{equation*}
	Finally, we have
	\begin{equation*}
	\breve n=\begin{pmatrix}
	{\rm i}k_{\rm t}\tau_r\\[4pt]
	\tfrac{{\rm i}k_{\rm t}(\tau_\theta-a_\theta)}{r}
	\end{pmatrix}^\top\mathbb B\begin{pmatrix}
	-\tfrac{{\rm i}k_{\rm t}r\tau_r}{\tau}+(\tfrac{1}{\tau}-\tfrac{r\tau_r}{\tau^2})\\[4pt]
	-\tfrac{{\rm i}k_{\rm t}(\tau_\theta-a_\theta)}{\tau}-\tfrac{r\tau_\theta}{\tau^2}
	\end{pmatrix}-k_{\rm t}^2\tau_r={\rm i}k_{\rm t}\Big(\tfrac{1}{r}-\tfrac{\tau_r}{\tau}+\tfrac{\tau_\theta a_\theta}{\tau^2 r}-\tfrac{{\rm i}k_{\rm t}\tau_r a_\theta^2}{\tau^2}\Big).
	\end{equation*}
	This completes the proof.
\end{proof}

We now introduce Spectral-element discretization based on the variational
formulation in Theorem~\ref{Th4.1}. The physical region $\Omega_1$ and
the RCL layer $\Omega_2$ are partitioned into non-overlapping quadrilateral
elements $\overline{\Omega}_1=\bigcup_{i=1}^{l_1}\overline{\Omega}_1^{(i)}$ and $\overline{\Omega}_2=\bigcup_{i=1}^{l_2}\overline{\Omega}_2^{(i)}$.
For each element, we use the Gordon-Hall mapping $\mathcal T_j^i:\widehat K=[-1,1]^2\to \Omega_j^{(i)}$, $j=1,2$ to transform the reference square $\widehat K$ onto the element $\Omega_j^{(i)}$. Let $\mathbb P_N([-1,1])$ denote the space of polynomials of degree at most
$N$ on $[-1,1]$, and define
\begin{equation*}
\mathbb Q_{M,N}:=\mathbb P_M([-1,1])\times \mathbb P_N([-1,1]).
\end{equation*}
Then we define the finite-dimensional approximation space by
\begin{equation*}
\begin{split}
&H_N=\{(\hat{\bm u}_N,\hat v_{{\rm p},N},\hat v_{{\rm s},N}):
\hat{\bm u}_N\in\bm H^1(\Omega_1),\quad(\bm u_N|_{\Omega_1^{(i)}})\circ\mathcal T_1^i\in \mathbb Q_{N_1,N_1}^2,
\quad i=1,\ldots,l_1, \\
&\hat v_{{\rm t},N}\in H^1(\Omega_2),\quad(\hat v_{{\rm t},N}|_{\Omega_2^{(i)}})\circ\mathcal T_2^i
\in \mathbb Q_{N_1,N},
\quad i=1,\ldots,l_2,\quad {\rm t}={\rm p},{\rm s}\}.
\end{split}
\end{equation*}
The approximate potentials in the layer are then reconstructed by $\phi_{{\rm t},N}=w_{\rm t}v_{{\rm t},N}$. Then the discrete problem is to find $(\bm u_N,\hat v_{{\rm p},N},\hat v_{{\rm s},N})\in H_N$ such that
\begin{equation*}
\mathcal A_N((\hat{\bm u}_N,\hat v_{{\rm p},N},\hat v_{{\rm s},N}),(\hat{\bm\varphi}_N,\hat\zeta_{{\rm p},N},\hat\zeta_{{\rm s},N}))=\mathcal F_N(\hat{\bm\varphi}_N,\hat\zeta_{{\rm p},N},\hat\zeta_{{\rm s},N}), \forall (\hat{\bm\varphi}_N,\hat\zeta_{{\rm p},N},\hat\zeta_{{\rm s},N})\in H_N.
\end{equation*}
Here $\mathcal A_N$ and $\mathcal F_N$ are obtained from the continuous bilinear form in Theorem~\ref{Th4.1} by replacing the continuous functions with their spectral-element approximations.

The coupling across the interface $\Gamma_1$ is imposed through the variational formulation. In particular, the standard transmission conditions, namely the continuity of displacement and traction across $\Gamma_1$, are incorporated in the boundary terms coupling the displacement unknown in $\Omega_1$ with the Helmholtz potentials in $\Omega_2$. After assembling all elemental contributions, the method leads to a finite-dimensional complex linear system for $\hat{\bm u}_N$, $\hat v_{{\rm p},N}$ and $\hat v_{{\rm s},N}$.

\section{Numerical experiments}
\label{sec:5}

This section provides some numerical experiments to verify the effectiveness of the proposed RCL method. Here, we set $N_1=30$, which ensure that the numerical error is primarily dominated by the approximation in the RCL layer. In addition, the maximum relative errors presented in this section are calculated in accordance with the expression
\begin{equation*}
\epsilon_{\infty}:=\frac{\max _{\bm x \in \Omega_1}\left\{|\bm u^{\text {num }}(\bm x)-\bm u^{\text {ex }}(\bm x)|\right\}}{\max _{\bm x \in \Omega_1}\left\{|\bm u^{\text {ex }}(\bm x)|\right\}}.
\end{equation*}
In all examples except Example \ref{Example:5.4}, the exact solution is given by a pressure point source located at a point $\bm z$ within the scatterer, as follows:
\begin{equation*}
\bm u^{\rm ex}=\nabla_{\bm x}H_0^1(k_{\rm p}|\bm x-\bm z|)+\bm\curl_{\bm x} H_0^1(k_{\rm s}|\bm x-\bm z|).
\end{equation*}
The boundaries $\Gamma$, $\Gamma_1$ and $\Gamma_2$ are represented in polar form by $r=R_0(\theta)$, $r=R_1(\theta)$ and $r=R_2(\theta)$ for $\theta\in[0,2\pi)$, respectively.

\subsection{Circular RCL layer}
\label{Example:5.1}

In this subsection, we present a series of numerical examples to demonstrate the high accuracy of the proposed method, especially for high-frequency, low-frequency, large-wavenumber-ratio, and imaginary-wavenumber problems. Consider the elastic scattering by a circular scatterer of radius $0.5$ centered at the origin. Let $\Gamma_1$ and $\Gamma_2$ be $r=1$ and $r=1+d$, respectively.  We fix $\bm z=(0.1,0.2)$. Unless otherwise specified, we set $\rho=2$, $\lambda=3$, $\mu=1$. Then we have $k_{\rm p}=\sqrt{\tfrac{2}{5}}\omega$ and $k_{\rm s}=\sqrt{2}\omega$. Let $l_1=20$ and $l_2=10$, i.e., the computational domain is partitioned into 30 non-overlapping quadrilateral elements.   

In the first example, we test the exponential convergence of the RCL method. We compute relative errors for different values of $d$ and $\tau_0$. As shown in Figures \ref{Exampledtau0}, we can observe that the relative error decays exponentially with increasing RCL thickness or absorption coefficient $\tau_0$, which is consistent with our theoretical proof. 
\begin{figure}[htb]
	\centering
	\begin{tabular}{cc}
		\includegraphics[scale=0.13]{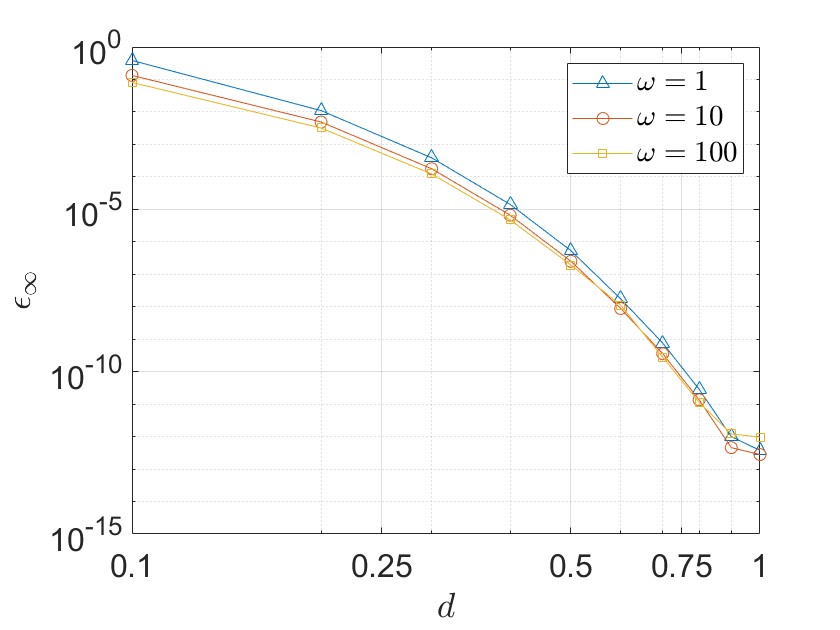} &
		\includegraphics[scale=0.13]{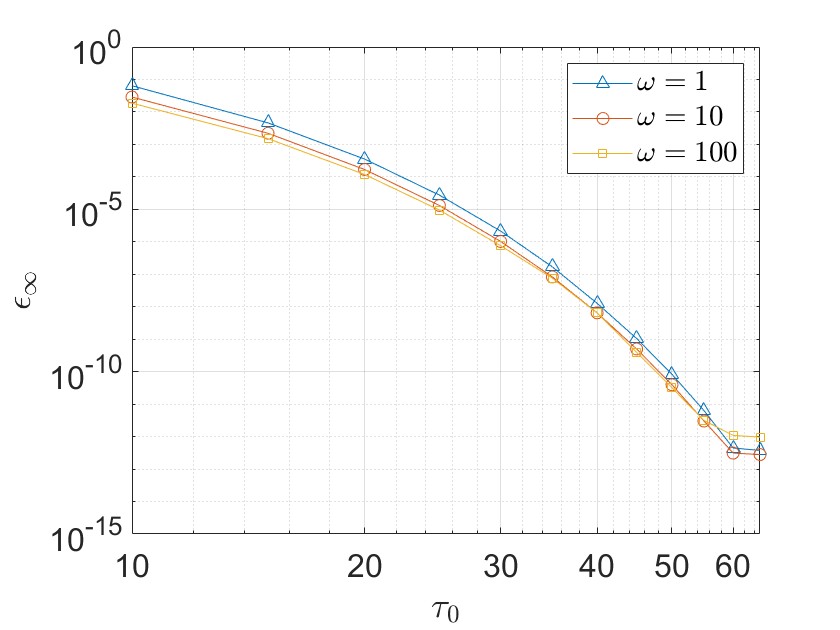} \\
		(a) Errors against $d$  &(b) Errors against $\tau_0$
	\end{tabular}
	\caption{Numerical errors against the thickness $d$ and the parameters $\tau_0$ of the RCL layer. (a) Vary $d$ with a fixed $\tau_0=60$; (b) Vary $\tau_0$ with a fixed $d=0.1$}
	\label{Exampledtau0}
\end{figure}

Next, we test the performance of the RCL method in both low- and high-frequency regimes. Table \ref{Table1} shows that the accuracy for different layer thicknesses in different relatively high- and low-frequency regimes, respectively. We can observe that, with a suitable choice of the parameter $\tau_0$, the accuracy of the RCL method is insensitive to variations in the layer thickness $d$. Consequently, for any given frequency, the precision can be ensured using a constant RCL thickness by adjusting $\tau_0$, in contrast to the $0.5-2$ wavelengths typically required by PML and PAL methods.

\begin{table}
	\centering
	\caption{ Errors versus thickness $d$ for RCL with $N=60$ and $\tau_0=\frac{60}{d}$.}
	\begin{tabular}{lllll}
		\hline$\omega$ & \multicolumn{1}{c}{$d=1$}  & $d=0.1$ & $d=0.01$ & $d=0.001$ \\
		\cline { 2 - 5 }50 &$6.02 \rm{E}-13$  &   $1.36 \rm{E}-12$ & $3.94 \rm{E}-11$ &$2.33 \rm{E}-9$  \\
		100 &$1.13 \rm{E}-12$  &   $1.62 \rm{E}-12$ &$3.59 \rm{E}-11$  &$2.34 \rm{E}-9$  \\
		150 &$1.92 \rm{E}-12$  &  $2.78 \rm{E}-12$ &$4.80 \rm{E}-11$  &$2.13 \rm{E}-9$  \\
		200 &$2.07 \rm{E}-12$  &   $2.05 \rm{E}-12$ &$2.34 \rm{E}-11$  &$1.42 \rm{E}-9$  \\
		250 &$3.42 \rm{E}-12$  &  $2.78 \rm{E}-12$ &$2.73 \rm{E}-11$  &$1.09 \rm{E}-9$  \\
		300 &$6.35 \rm{E}-12$  &   $3.12 \rm{E}-12$ &$2.03 \rm{E}-11$  &$1.35 \rm{E}-9$  \\
		\hline
		$10^{-4}$ &$1.61 \rm{E}-10$  &   $1.67 \rm{E}-10$ &$1.71 \rm{E}-10$  &$4.12 \rm{E}-10$  \\
		$10^{-5}$ &$6.64 \rm{E}-11$  &  $6.56 \rm{E}-11$ &$7.21 \rm{E}-11$  &$6.59 \rm{E}-11$  \\
		$10^{-6}$ &$9.84 \rm{E}-11$  &   $9.15 \rm{E}-11$ &$9.93 \rm{E}-11$  &$9.79 \rm{E}-11$  \\
		$10^{-7}$ &$9.09 \rm{E}-10$  &   $6.03 \rm{E}-10$ &$5.55 \rm{E}-10$  &$1.34 \rm{E}-10$  \\
		$10^{-8}$ &$1.17 \rm{E}-10$  &   $1.55 \rm{E}-10$ &$1.54 \rm{E}-10$  &$1.18 \rm{E}-10$  \\
		\hline
	\end{tabular}
	\label{Table1}
\end{table}

Finally, we consider two challenging cases discussed above: high-contrast Lamé parameters and imaginary frequencies. The purpose is to test the RCL method when the P- and S-wave scales are strongly separated or when slowly decaying evanescent components occur. Figure~\ref{ExampleHB} shows that the method maintains high accuracy for extreme wavenumber ratios. Figure~\ref{ExampleIm} plots the errors $\epsilon_\infty$ versus $N$ for several imaginary frequencies, demonstrating the accuracy of the proposed method for evanescent-wave problems.

\begin{figure}[htb]
	\centering
	\begin{tabular}{ccc}
		\includegraphics[scale=0.12]{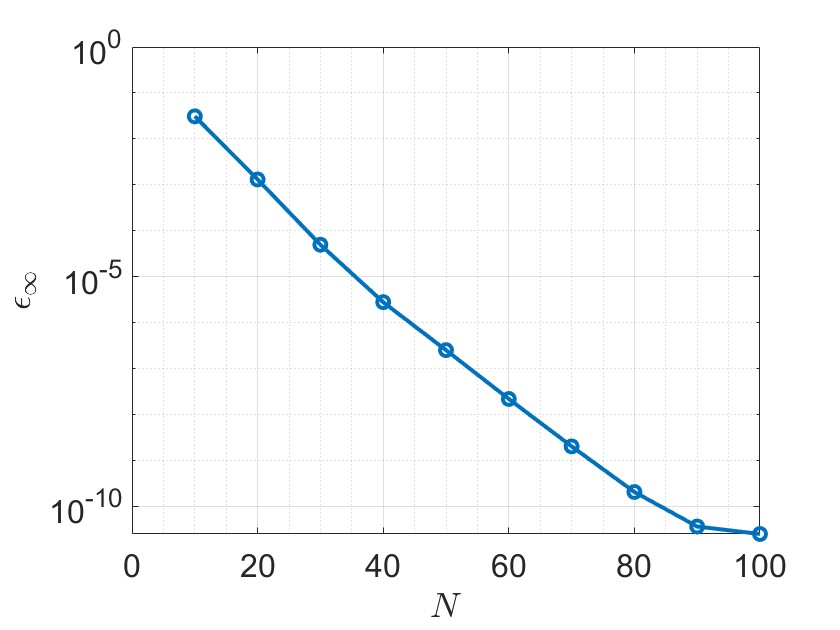} &
		\includegraphics[scale=0.12]{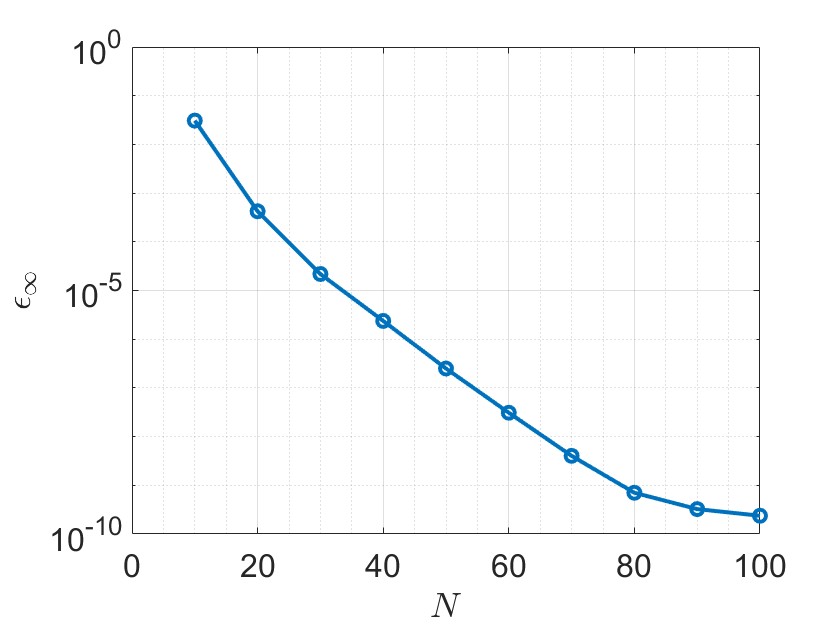} &
		\includegraphics[scale=0.12]{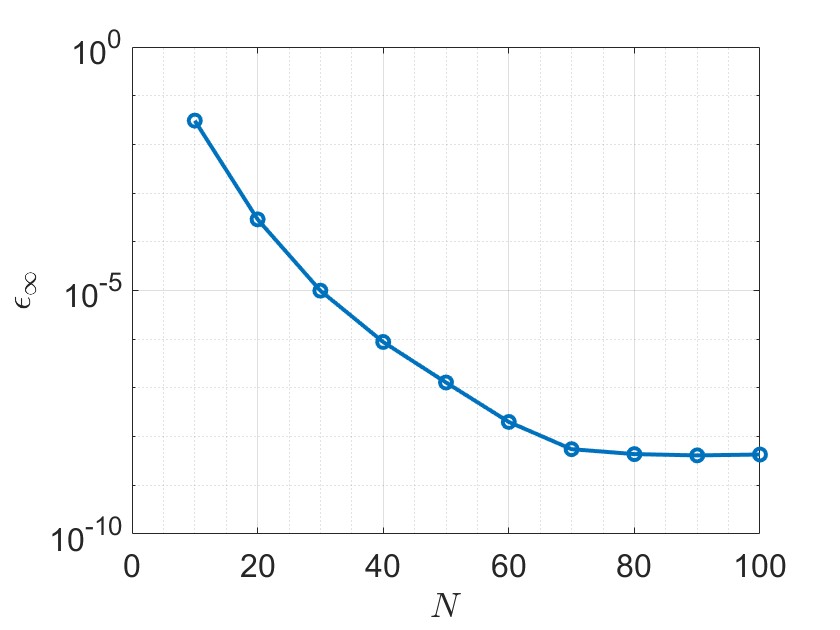} \\
		(a) $\frac{\lambda}{\mu}=10^2$  &(b) $\frac{\lambda}{\mu}=10^3$&(c) $\frac{\lambda}{\mu}=10^4$    
	\end{tabular}
	\caption{Numerical errors against $N$ for different Lamé parameter ratios with $d=0.1$.}
	\label{ExampleHB}
\end{figure}

\begin{figure}[htb]
	\centering
	\begin{tabular}{ccc}
		\includegraphics[scale=0.12]{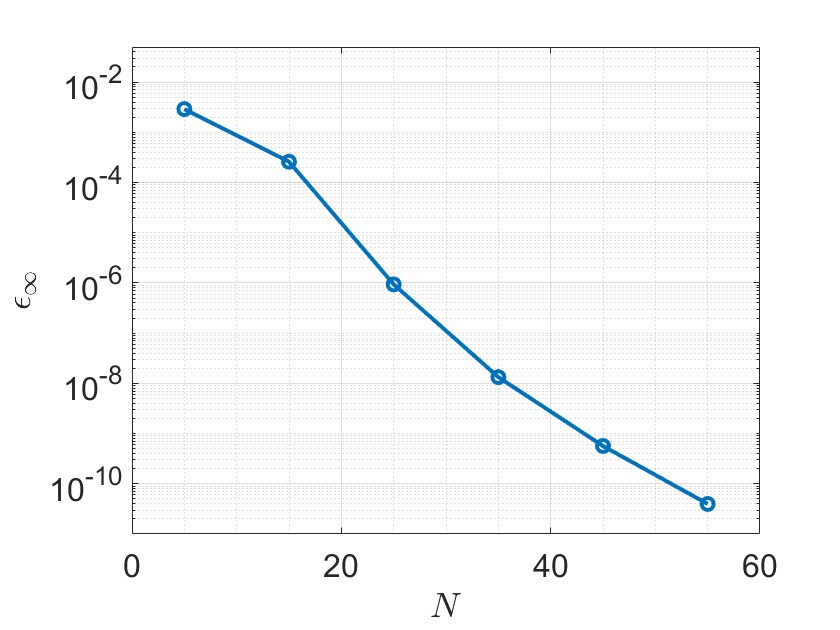} &
		\includegraphics[scale=0.12]{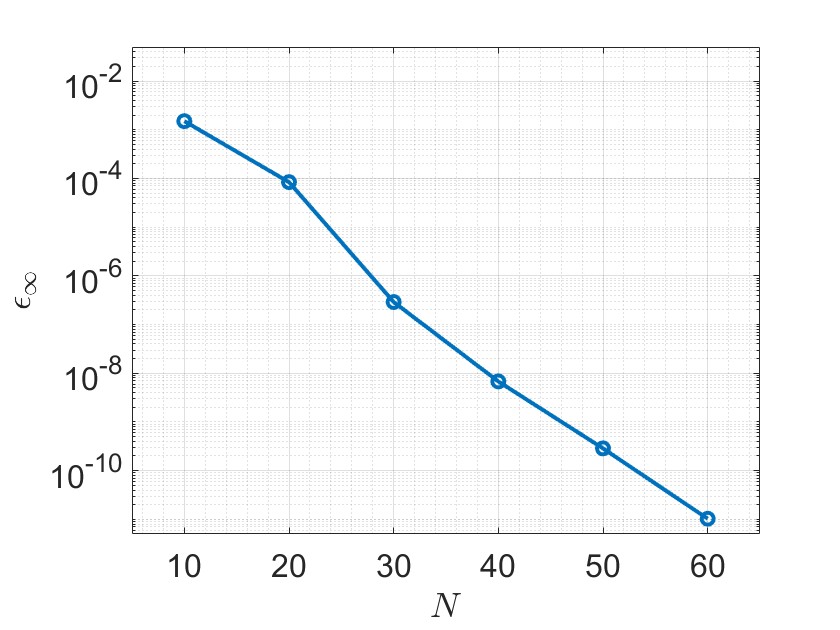} &
		\includegraphics[scale=0.12]{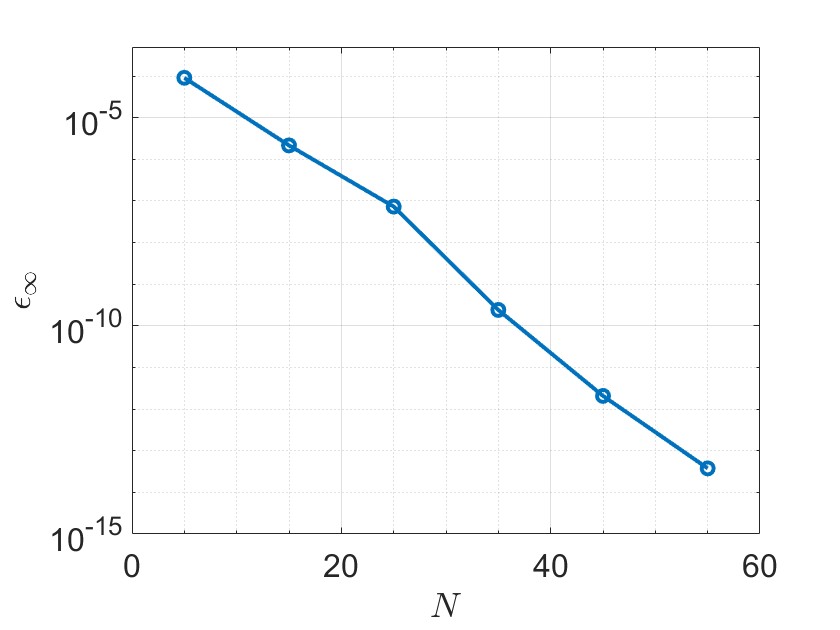} \\
		(a) $\omega=0.1{\rm i}$  &(b) $\omega=1{\rm i}$&(c) $\omega=10{\rm i}$    
	\end{tabular}
	\caption{Numerical errors against $N$ for different imaginary frequencies with $d=0.1$.}
	\label{ExampleIm}
\end{figure}

\begin{figure}[htb]
	\centering
	\begin{tabular}{ccc}
		\includegraphics[scale=0.12]{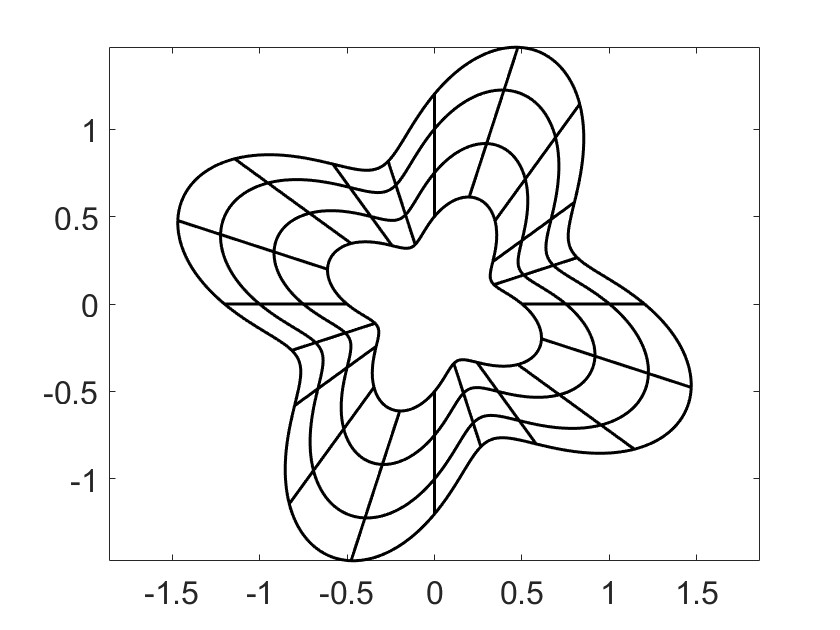} &
		\includegraphics[scale=0.12]{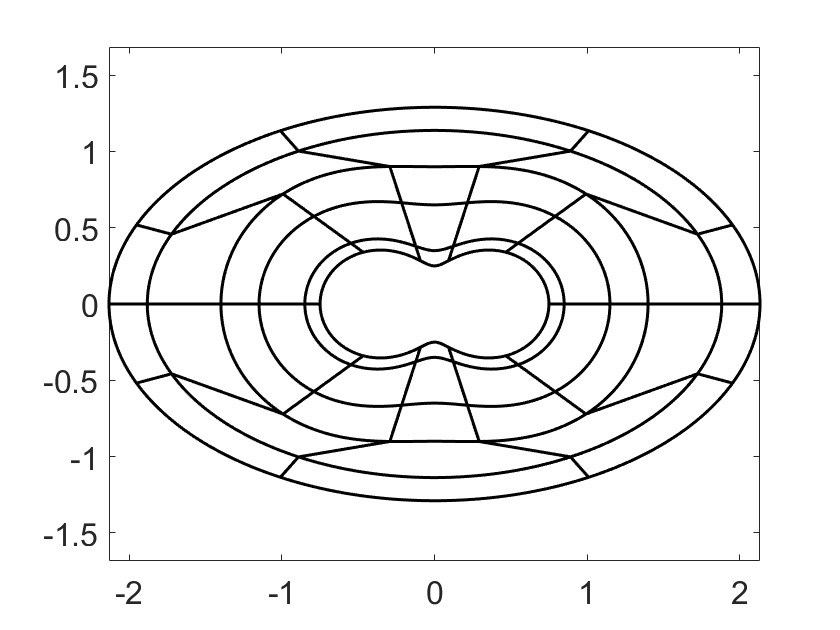} &
		\includegraphics[scale=0.12]{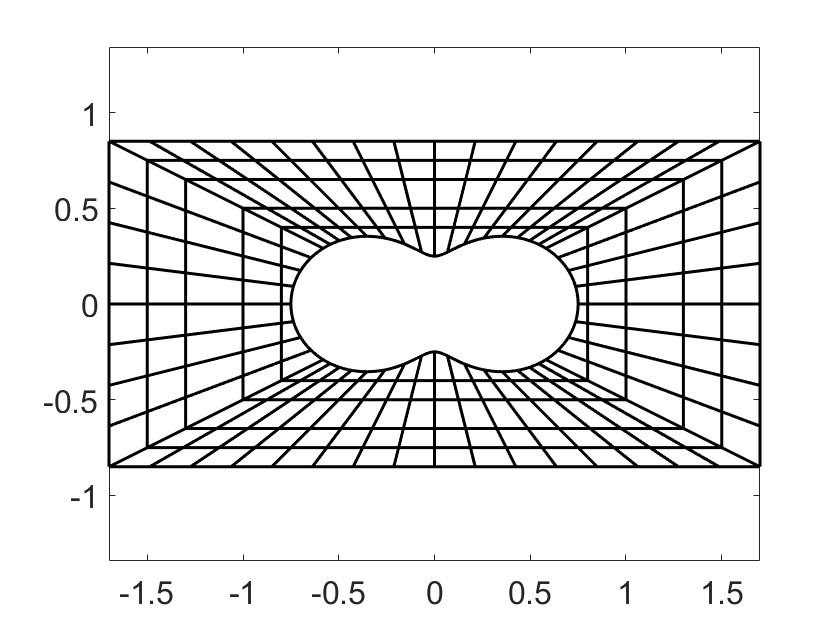} \\
		(a) Star-shaped layer   &(b) Elliptical layer &(c) Rectangular layer    
	\end{tabular}
	\caption{Obstacle shapes and mesh grids used in the numerical tests.}
	\label{Shapes}
\end{figure}

\subsection{Curvy four-pointed star-shaped layer}
\label{Example:5.2}
In the following examples, we demonstrate that the RCL method remains effective for a variety of layer geometries, including star-shaped, rectangular, and even nonconvex L-shaped layers. This illustrates the flexibility of the proposed approach.

We first examine a star-shaped scatterer $\Omega$, with its boundary $\Gamma$ defined by the following parametric representation
\begin{equation}
\label{R0}
R_0(\theta)=0.5+0.15 \sin (4(\theta+\pi / 4)), \quad \theta=[0,2 \pi) .
\end{equation}
We surround the scatterer with a star-shaped layer, and the parametric forms of the inner and outer boundaries of the layer are $R_1=2R_0(\theta)$ and $R_2=2.4R_0(\theta)$, respectively (see Figure \ref{Shapes} (a)). We take $\omega=30$, $l_1=20$ and $l_2=10$. Figure \ref{Examplestar} (a) denotes the exponential decay of relative errors in $\Omega_1$ as $N$ increases, which shows a good accuracy. In Figure \ref{Examplestar} (b), we plot $\Re(u_1^{\rm num})|_{\Omega_1}$ and $\Re(v_{\rm p}^{\rm num})|_{\Omega_2}$ with $N=50$. In Figure \ref{Examplestar} (c), we plot $\log(|\bm u^{\rm num}|)|_{\Omega_1}$ and $\log(|\phi_{\rm p}^{\rm num}|+|\phi_{\rm s}^{\rm num}|)|_{\Omega_2}$. It can be seen that the numerical solutions decay rapidly in the RCL region $\Omega_2$. 
\begin{figure}[htb]
	\centering
	\begin{tabular}{ccc}
		\includegraphics[scale=0.12]{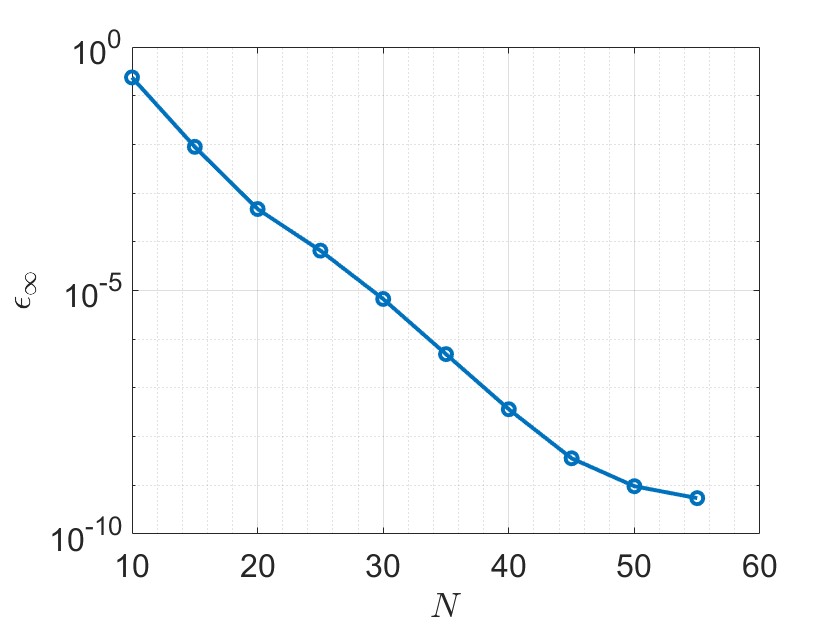} &
		\includegraphics[scale=0.12]{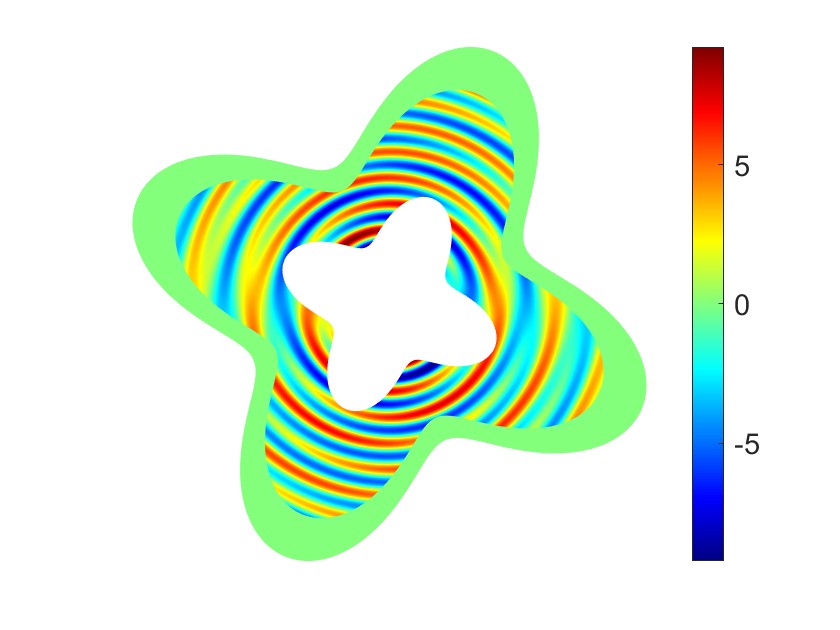} &
		\includegraphics[scale=0.12]{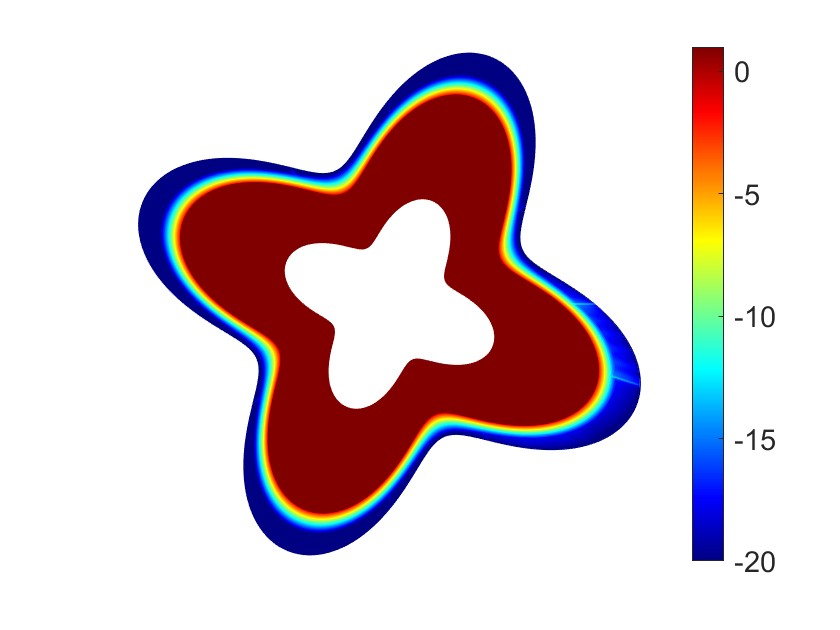} \\
		(a) errors against $N$&(b) $\Re(u_1^{\rm num})|_{\Omega_1}$  & (c) $\log(|\bm u^{\rm num}|)$  
	\end{tabular}
	\caption{Curvy Four-pointed Star-shaped scatterer and RCL layer.}
	\label{Examplestar}
\end{figure}

\subsection{Elliptical layer}
\label{Example:5.3}

Consider a peanut-shaped scatterer $\Omega$ with the parametric form of the boundary:
\begin{equation}
\label{EpR0}
R_0(\theta)=0.5+0.25 \sin (2(\theta+\pi / 4)), \quad \theta=[0,2 \pi).
\end{equation}
As shown in Figure \ref{Shapes} (b), we truncate the unbounded domain by an elliptical artificial layer. We take $\omega=10$, $l_1=40$ and $l_2=10$. It can be seen from Figure \ref{ExampleEp} (a) that the relative errors decay exponentially with the increase of $N$. The real parts of the first component of the exact and numerical fields are illustrated in Figures \ref{ExampleEp} (b)-(c). It is observed that the numerical results coincide perfectly with the exact solutions in $\Omega_1$, while rapidly vanishing in $\Omega_2$.
\begin{figure}[htb]
	\centering
	\begin{tabular}{ccc}
		\includegraphics[scale=0.12]{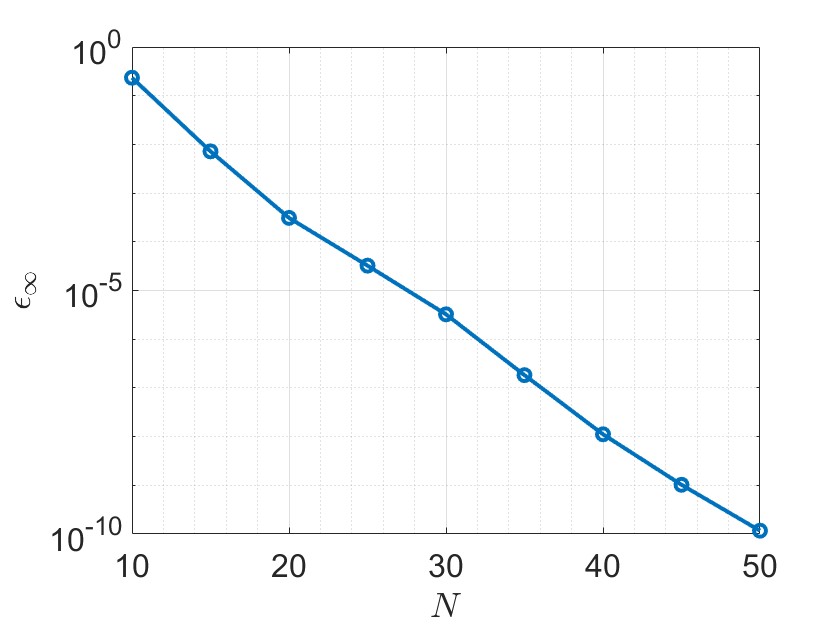} &
		\includegraphics[scale=0.12]{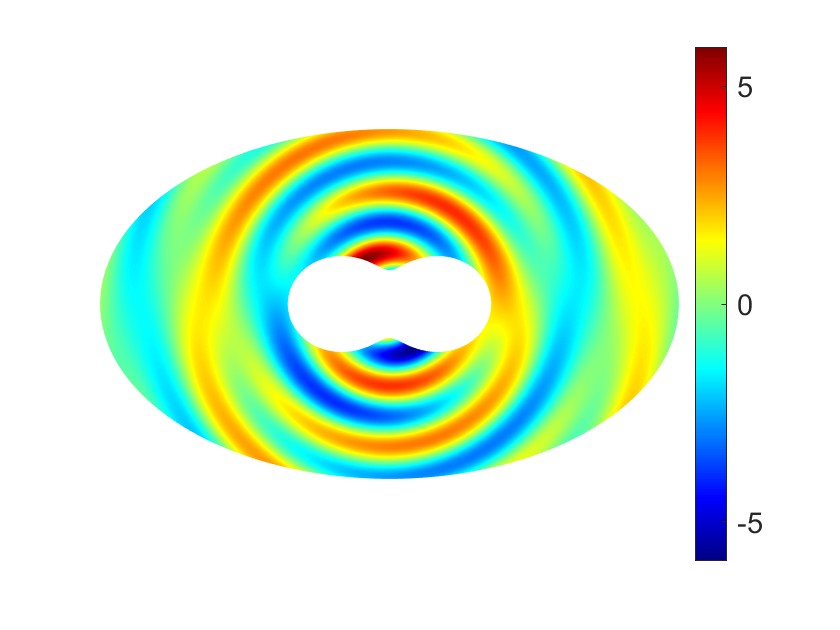} &
		\includegraphics[scale=0.12]{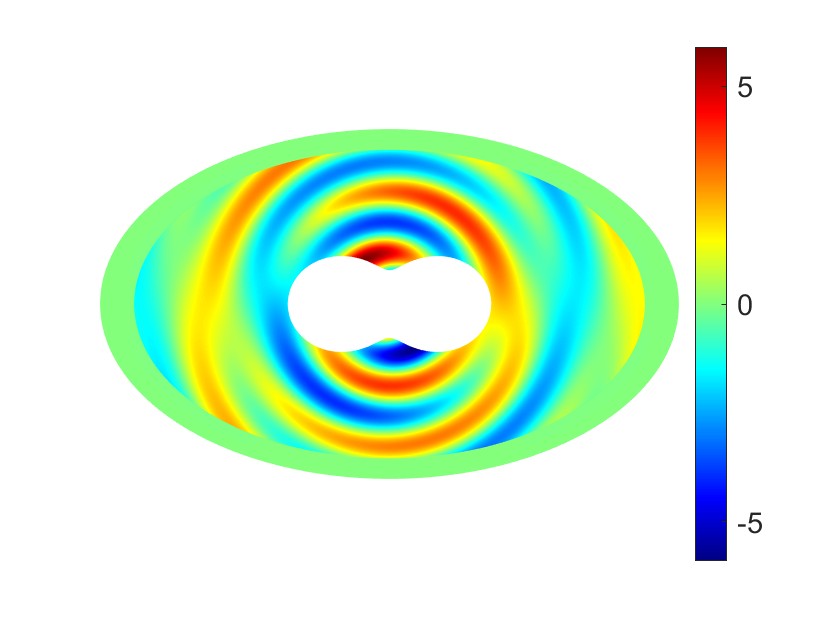}\\
		(a) errors against $N$&(b) $\Re(u^{\rm ex}_1)$ & (c) $\Re(u^{\rm num}_1)$ 
	\end{tabular}
	\caption{Peanut-shaped scatterer with elliptical RCL layer. The simulation results are obtained with $R_2(\theta)=\frac{17}{15}R_1(\theta)$.}
	\label{ExampleEp}
\end{figure}

\subsection{Rectangular layer}
\label{Example:5.4}

In this example, we consider the scattering of an incident plane wave
\begin{equation*}
\bm{u}^{\rm inc}=\bm d e^{{\rm i} k_s\bm x \cdot \bm d}, \quad \bm d=(1,0)^{\top},
\end{equation*}
which yields that the boundary condition
\begin{equation*}
\bm g=\bm d e^{{\rm i} k_s\bm x \cdot \bm d}, \quad \bm d=(1,0)^{\top}.
\end{equation*}
Then we surround the same scatterer \eqref{EpR0} with a rectangular layer, and the boundary $\Gamma_2$ is a rectangle with four vertices: $(-1.5,-0.75)$, $(-1.5,0.75)$, $(1.5,-0.75)$ and $(1.5,0.75)$ (see Figure \ref{Shapes} (c)). Here, we choose $\omega=30$, $l_1=192$ and $l_2=48$. Figure \ref{Examplehom} (a) presents the relative errors $\epsilon_{\infty}$ with respect to different $N$, which clearly demonstrates the efficiency the proposed method. We depict the real parts of the two component of of numerical solutions in Figures \ref{Examplehom} (b)-(c).

As a comparison, we consider the exterior scattering problem with a locally inhomogeneous medium. All numerical settings remain unchanged, except that the function $\theta(\bm x)$ in $\Omega_1$ is replaced by a shifted Gaussian function 
\begin{equation*}
n(\bm{x})=1+c_0 \exp (-\tfrac{(x-x_0)^2+(y-y_0)^2}{2 c_1^2}) 
\end{equation*}
with $x_0=0, y_0=0.3, c_0=2$, and $c_1=0.06$. Figure \ref{Exampleinhom} (a) shows that the maximum relative error is depicted in , which demonstrate that the proposed RCL technique is accurate and robust for various scatterers with locally inhomogeneous media. The real parts of the two component of of numerical solutions is depict in Figure \ref{Exampleinhom} (b)-(c). Compared with homogeneous medium, it is observable that wave oscillations increase over the upper-middle portion of the peanut scatterer as a result of the surrounding inhomogeneity.

\begin{figure}[htb]
	\centering
	\begin{tabular}{ccc}
		\includegraphics[scale=0.12]{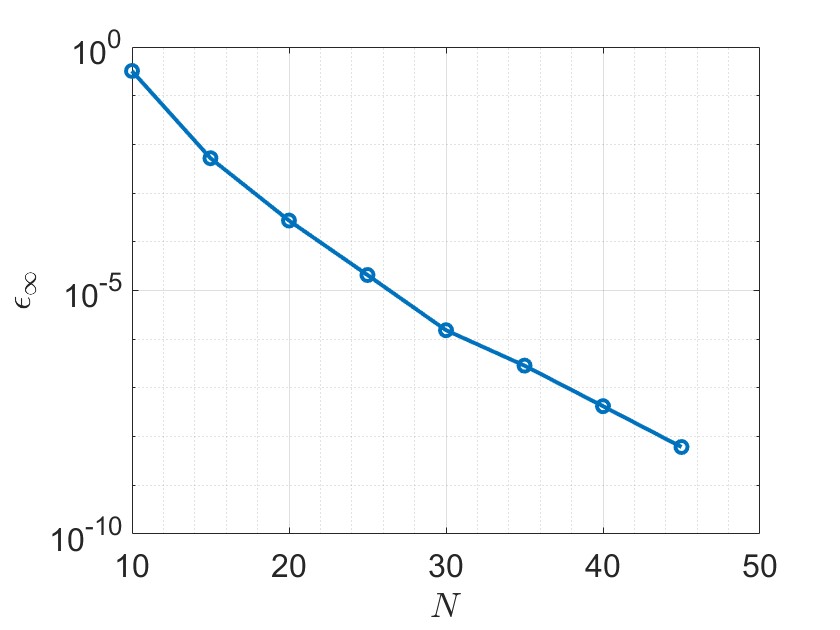} &
		\includegraphics[scale=0.12]{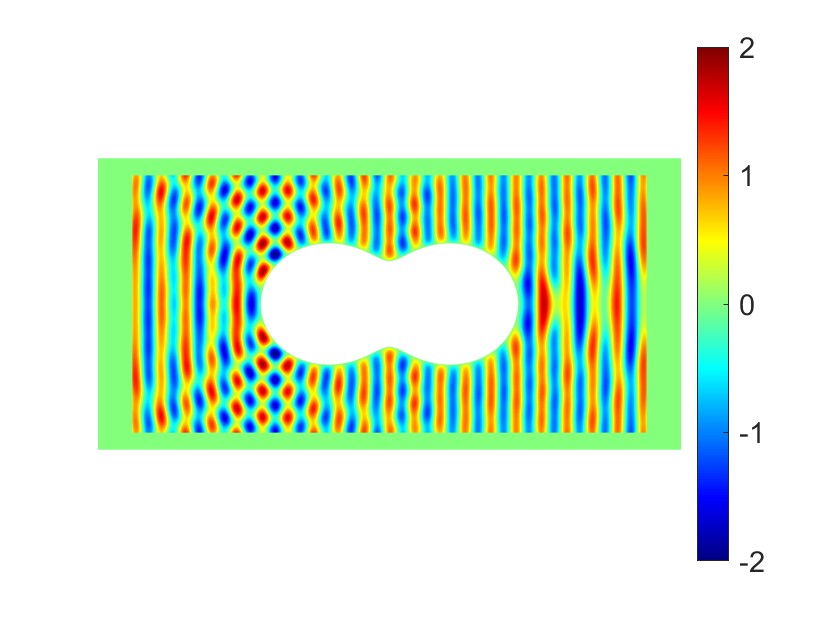} &
		\includegraphics[scale=0.12]{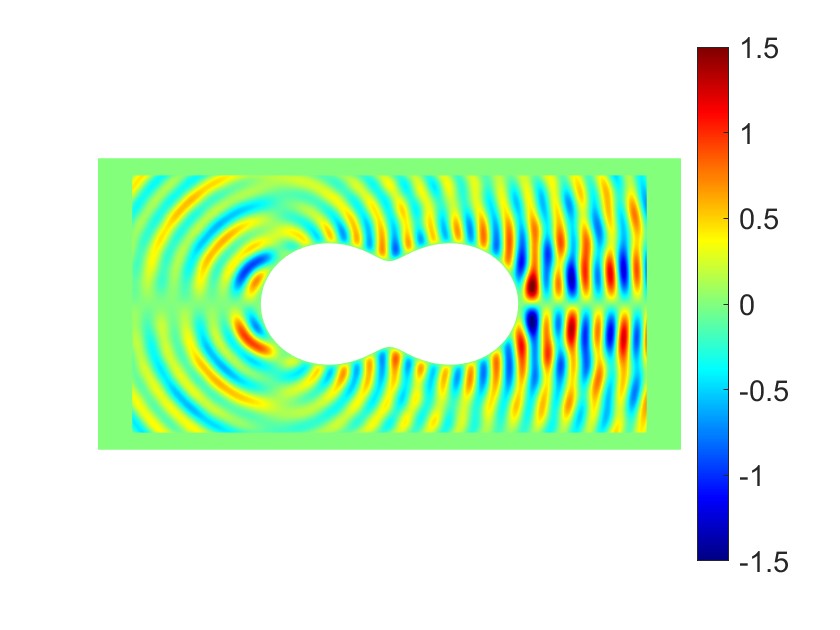} \\
		(a) errors against $N$  &(b) $\Re(u_1^{\rm num})$  &(c) $\Re(u_2^{\rm num})$    \\
	\end{tabular}
	\caption{Peanut-shaped scatterer with a rectangluar PAL layer. The simulation results are obtained with $R_2(\theta)=\frac{17}{15}R_1(\theta)$.}
	\label{Examplehom}
\end{figure}

\begin{figure}[htb]
	\centering
	\begin{tabular}{ccc}
		\includegraphics[scale=0.12]{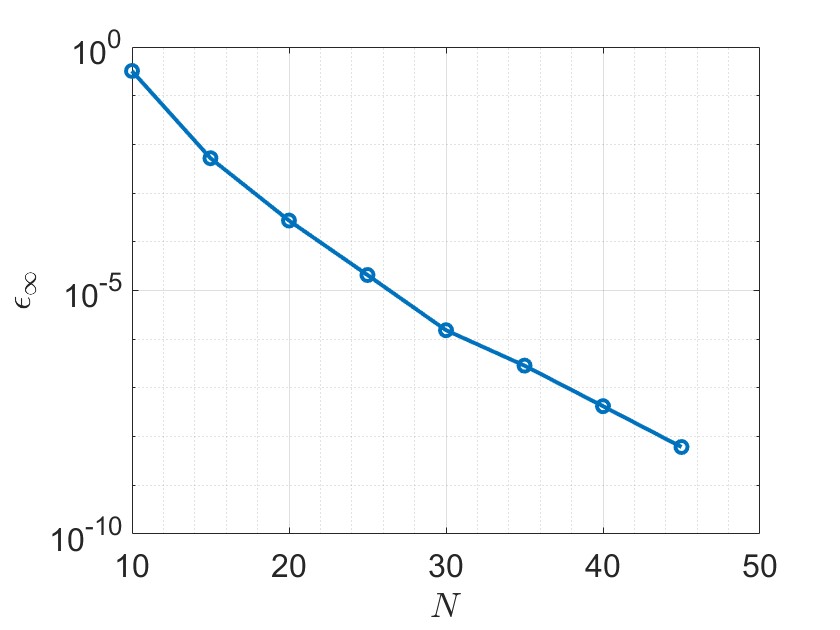} &
		\includegraphics[scale=0.12]{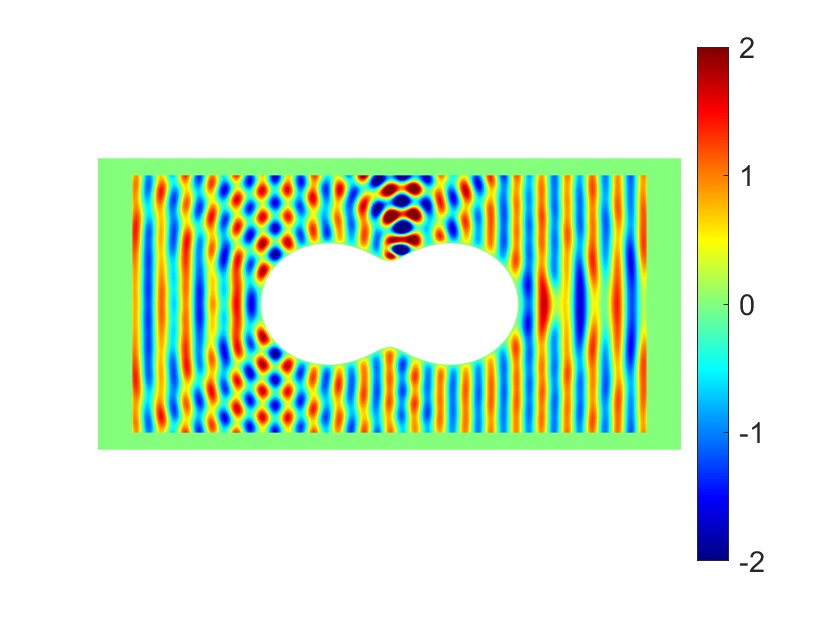} &
		\includegraphics[scale=0.12]{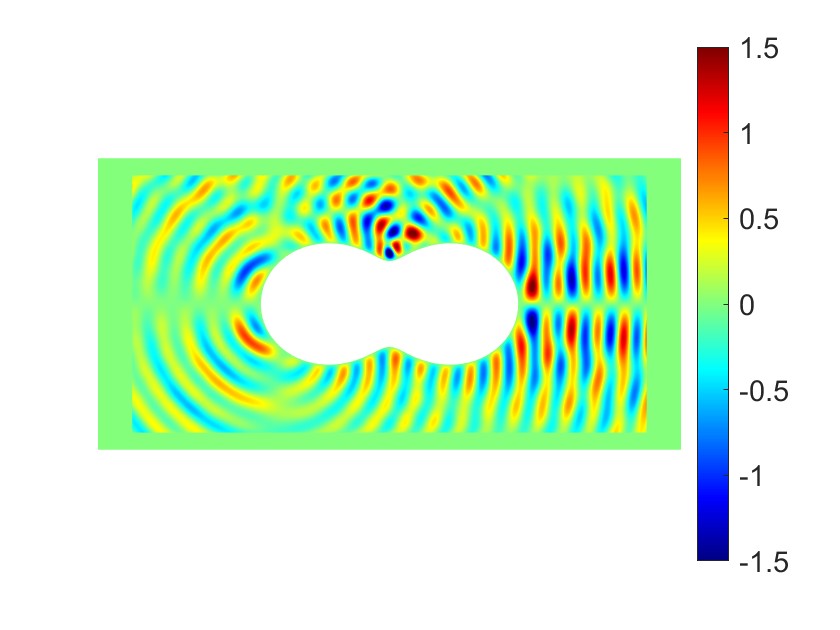} \\
		(a) errors against $N$  &(b) $\Re(u_1^{\rm num})$  &(c) $\Re(u_2^{\rm num})$    \\
	\end{tabular}
	\caption{Scattering problem with a locally inhomogeneous medium.}
	\label{Exampleinhom}
\end{figure}

\subsection{L-shaped scatterer}
\label{Example:5.5}

Finally, we consider the elastic problem of scattering by an L-shaped obstacle. Let $\omega=10$, $l_1=144$, $l_2=48$ and $N=30$. As shown in Figure \ref{ExampleLshape} (a), we only need to set up the RCL layer in a small neighborhood of the physical domain, which can effectively save computational cost. In Figure \ref{ExampleLshape} (b), we plot $\log(|\bm u^{\rm num}|)|_{\Omega_1}$ and $\log(|\phi_{\rm p}^{\rm num}|+|\phi_{\rm s}^{\rm num}|)|_{\Omega_2}$, which shows that the numerical solutions decay rapidly in the RCL region $\Omega_2$. Figures \ref{ExampleLshape} (c)-(f) compare the real parts of the exact and numerical solutions for two components of $\bm u$. It is observed that the numerical solutions match perfectly with the exact solutions in the region $\Omega_1$, whereas in $\Omega_2$, the real parts of $v_{\rm p}$ and $v_{\rm s}$ decay rapidly. This demonstrates that, for scattering problems involving non-convex domains, the proposed method can still performs well.

\begin{figure}[htb]
	\centering
	\begin{tabular}{ccc}
		\includegraphics[scale=0.12]{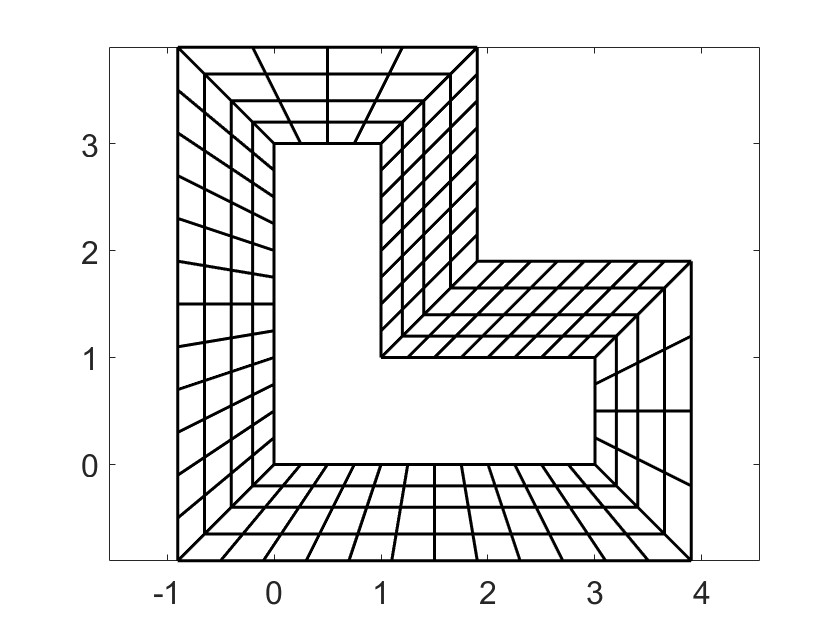} &
		\includegraphics[scale=0.12]{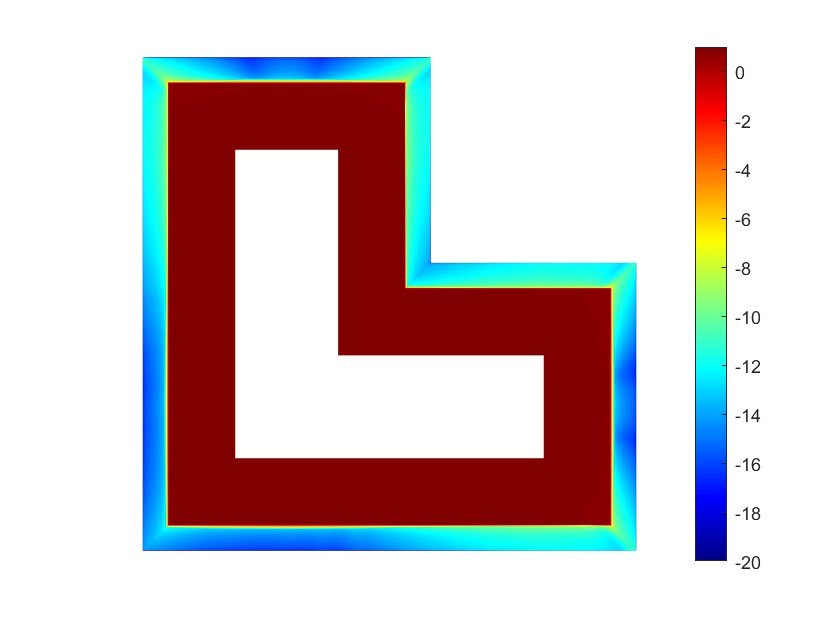} &
		\includegraphics[scale=0.12]{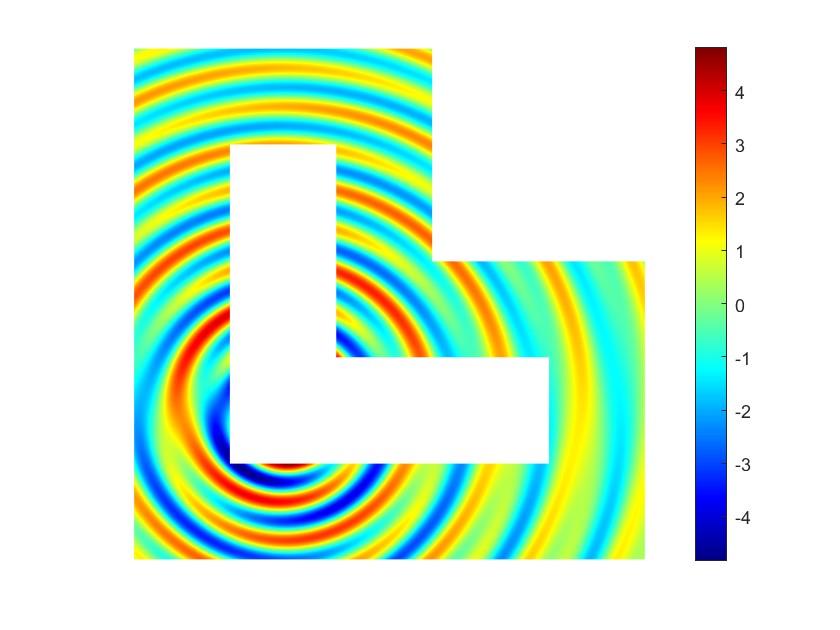} \\
		(a) illustration of mesh grids  &(b) $\log(|\bm u^{\rm num}|)$&(c) $\Re(u_1^{\rm exa})$    \\
		\includegraphics[scale=0.12]{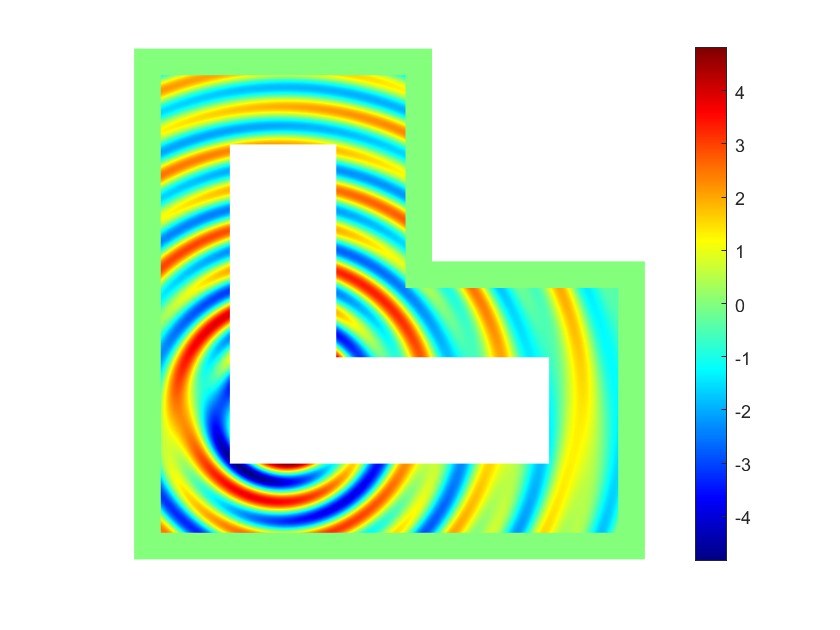} &
		\includegraphics[scale=0.12]{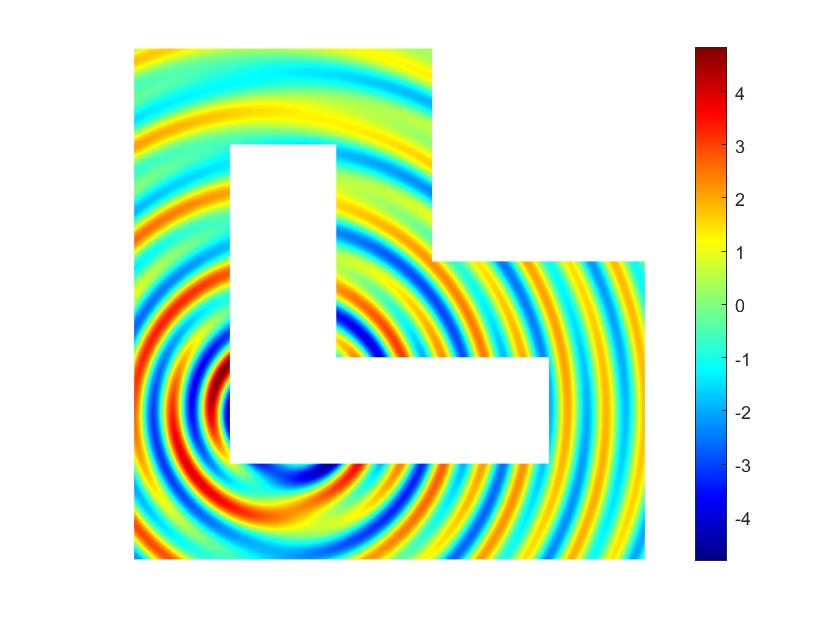} &
		\includegraphics[scale=0.12]{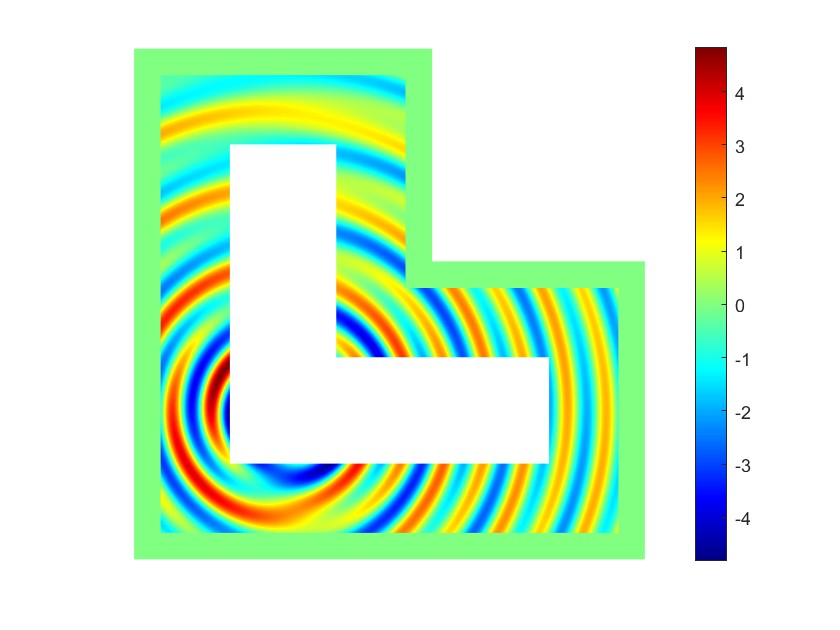} \\
		(d) $\Re(u_1^{\rm num})$   &(e) $\Re(u_2^{\rm exa})$ &(f) $\Re(u_2^{\rm num})$   \\
	\end{tabular}
	\caption{Comparison between the exact and numerical solutions for waves scattered by an L-shaped scatterer with $\omega=10$ and $N=30$.}
	\label{ExampleLshape}
\end{figure}

\section{Conclusion}

In this paper, we developed a real compressed layer method for domain reduction of time-harmonic elastic wave scattering problems in inhomogeneous media. Since the direct use of real compression is not suitable for elastic waves, we apply the Helmholtz decomposition only in the exterior homogeneous layer to decompose the elastic displacement field into compressional and shear wave components. After real compression, the resulting oscillatory modes can be explicitly extracted through suitable phase-extraction substitutions. We proved the well-posedness and exponential convergence of the resulting RCL problem, and developed a high-order spectral element discretization. Numerical experiments show that the proposed method is accurate, robust, and nonreflecting for high- and low-frequency waves, large wavenumber ratios, imaginary wavenumbers, and different layer geometries.

Beyond the present elastic scattering setting, the proposed RCL framework provides a promising strategy for more complicated multi-wave and coupled systems. Its capability to handle low-frequency waves and complex wavenumber components is particularly relevant to time-domain solvers based on convolution quadrature~\cite{BLM11}, where a sequence of complex-frequency problems must be solved, as well as to frequency-time hybrid solvers requiring accurate low-frequency solutions~\cite{ABL20}. Similar ideas may also be useful for biharmonic, thermoelastic, poroelastic, and other coupled wave models, where multiple wave modes and complex or imaginary wavenumbers naturally arise. Extensions to three-dimensional problems, layered high-contrast media, and anisotropic media are also of interest.

\appendix
\section*{Appendix. Proof of Lemma 2.3}
\label{proofoflemma}

We assume that 
\begin{align}
\label{uR}
&\bm u=\sum_{n=-\infty}^{\infty}(u_n^r(r,\theta)\bm e_r+u_n^\theta(r,\theta)\bm e_\theta)e^{{\rm i}n\theta},\\[0.4em]
\label{PS}
&\phi_{\rm t}(r, \theta)=\sum_{n=-\infty}^{\infty} \phi_{{\rm t},n}(r) \rm{e}^{\rm{i} n \theta},\quad t=p,s.
\end{align}
Using \eqref{HD}, we can obtain that
\begin{equation}
\label{upc}
u_n^r=\tfrac{\partial \phi_{{\rm p},n}}{\partial r}+\tfrac{1}{r} \tfrac{\partial \phi_{{\rm s},n}}{\partial \theta}, \quad u_n^\theta=\tfrac{1}{r} \tfrac{\partial \phi_{{\rm p},n}}{\partial \theta}-\tfrac{\partial \phi_{{\rm s},n}}{\partial r}.
\end{equation}
In polar coordinates, the Laplace operator has the expression
\begin{equation}
\label{PL}
\Delta=\tfrac{1}{r} \tfrac{\partial}{\partial r}(r \tfrac{\partial}{\partial r})+\tfrac{1}{r^2} \tfrac{\partial^2}{\partial \theta^2}.
\end{equation}
Substituting \eqref{PS} into \eqref{PL} yields 
\begin{equation}
\label{Bssel}
[\tfrac{1}{r} \tfrac{d}{d r}(r \tfrac{d}{d r})+(k_{\rm t}^2-\tfrac{n^2}{r^2})] \phi_{{\rm t},n}(r)=0,\quad t=p,s.
\end{equation}
According to~\cite{OLBC10}, the outgoing solution of \eqref{Bssel} takes the following form
\begin{equation}
\label{vpR}
\phi_{{\rm t},n}=b_{{\rm t},n}H_n^{(1)}(k_{\rm t} r).
\end{equation}
According to \eqref{upc}, we have
\begin{equation}
\label{urtheta}
\begin{aligned}
& u_n^r(r, \theta)=k_{\rm p} b_{{\rm p},n} H_n^{(1)^{\prime}}(k_{\rm p} r)+\tfrac{\rm{i} n}{r}b_{{\rm s},n} H_n^{(1)}(k_{\rm s} r) , \\
& u_n^\theta(r, \theta)=\tfrac{\rm{i} n}{r}b_{{\rm p},n} H_n^{(1)}(k_{\rm p} r)-k_{\rm s} b_{{\rm s},n} H_n^{(1)^{\prime}}(k_{\rm s} r).
\end{aligned}
\end{equation}
In the polar coordinates, the boundary data $\bm g$ has the Fourier series expansion
\begin{equation}
\label{gR}
\bm g=\sum_{n=-\infty}^{\infty}(g_n^r(r,\theta)\bm e_r+g_n^\theta(r,\theta)\bm e_\theta)e^{{\rm i}n\theta}.
\end{equation}
Then we can obtain the system for the Fourier coefficients 
\begin{equation*}
\begin{bmatrix}
k_{\rm p}\, H_{n}^{(1)\,'}\!\big(k_{\rm p} a\big) & \tfrac{{\rm i} n}{a}\, H_{n}^{(1)}\!\big(k_{\rm s} a\big)\\[6pt]
\tfrac{{\rm i} n}{a}\, H_{n}^{(1)}\!\big(k_{\rm p} a\big) & -\,k_{\rm s}\, H_{n}^{(1)\,'}\!\big(k_{\rm s} a\big)
\end{bmatrix}
\begin{bmatrix} b_{{\rm p},n} \\[6pt] b_{{\rm s},n} \end{bmatrix}
=
\begin{bmatrix}  g_{r,n} \\[6pt]  g_{s,n} \end{bmatrix},
\end{equation*}
which leads to 
\begin{equation}
\label{anbn}
\begin{split}
& b_{{\rm p},n} = -\tfrac{\,k_{\rm s}\, H_{n}^{(1)'}(k_{\rm s} a)  g_{r,n} + \tfrac{{\rm i} n}{a} H_{n}^{(1)}(k_{\rm s} a)  g_{\theta,n}}{D_n},\\
& b_{{\rm s},n} = \tfrac{k_{\rm p} H_{n}^{(1)'}(k_{\rm p} a)  g_{\theta,n}- \tfrac{{\rm i} n}{a} H_{n}^{(1)}(k_{\rm p} a)   g_{r,n}}{D_n},
\end{split}
\end{equation}
where 
\begin{equation}
\label{Dmdefn}
D_n := -k_{\rm p} k_{\rm s} H_{n}^{(1)'}(k_{\rm p} a) H_{n}^{(1)'}(k_{\rm s} a)+ \tfrac{n^2}{a^2} H_{n}^{(1)}(k_{\rm p} a) H_{n}^{(1)}(k_{\rm s} a).
\end{equation}
In particular, for $n=0,$ we have
\[
b_{{\rm p},0}=\tfrac{ g_{r,0}} {k_{\rm p}H_{0}^{(1)'}(k_{\rm p} a)} , 
\quad b_{{\rm s},0}=-\tfrac{ g_{\theta,0}} {k_{\rm s} H_{0}^{(1)'}(k_{\rm s} a)}.
\]
We can rewrite \eqref{Dmdefn} as
\begin{equation*}
D_n=H_n^{(1)}(k_{\rm p} a) H_n^{(1)}(k_{\rm s} a)\Big[\tfrac{n^2}{a^2}-k_{\rm p} k_{\rm s} \tfrac{H_n^{(1) \prime}(k_{\rm p} a)}{H_n^{(1)}(k_{\rm p} a)} \tfrac{H_n^{(1) \prime}(k_{\rm s} a)}{H_n^{(1)}(k_{\rm s} a)}\Big].
\end{equation*}
It can be seen from \cite{SW2007} that
\begin{equation*}
\Re(\tfrac{H_n^{(1) \prime}(k_{\rm t} a)}{H_n^{(1)}(k_{\rm t} a)})<0,\quad \Im(\tfrac{H_n^{(1) \prime}(k_{\rm t} a)}{H_n^{(1)}(k_{\rm t} a)})>0,
\end{equation*}
which implies that
\begin{equation*}
|D_n|>k_{\rm p}k_{\rm s}|H_n^{(1)}(k_{\rm p} a) H_n^{(1)}(k_{\rm s} a)||\Im (\tfrac{H_n^{(1) \prime}(k_{\rm p} a)}{H_n^{(1)}(k_{\rm p} a)} \tfrac{H_n^{(1) \prime}(k_{\rm s} a)}{H_n^{(1)}(k_{\rm s}a)})|>0,\quad n\ge 1.
\end{equation*}
It is obvious that $D_0\neq0$. Therefore, the solutions \eqref{anbn} are well-defined. In view of the identity $H_{-n}^{(1)}(z)=(-1)^n H_n^{(1)}(z)$, we have
\begin{equation}
\label{phit}
\phi_{\rm t}(r, \theta)=\sum_{n=1}^{\infty} c_{{\rm t},n}(\theta)H_n^{(1)}(k_{\rm t} r) ,\quad t=p,s,
\end{equation}
where
\begin{equation*}
c_{{\rm t},0}=b_{{\rm t},0},\quad c_{{\rm t},n}(\theta)=b_{{\rm t},n}\rm{e}^{\rm{i} n \theta}+(-1)^nb_{{\rm t},-n}\rm{e}^{-\rm{i} n \theta},\quad \text{for}\quad n>0.
\end{equation*}
On the other hand, we know from~\cite{OLBC10} that
\begin{equation}
\label{HN}
H_n^{(1)}(k_{\rm t} r)=H_0^{(1)}(k_{\rm t} r) P_n(\tfrac{1}{k_{\rm t} r})+H_1^{(1)}(k_{\rm t} r) Q_n(\tfrac{1}{k_{\rm t} r}),
\end{equation}
where $P_n$, $Q_n$ are the Lommel polynomials~\cite{W95} given by
\begin{equation*}
\begin{aligned}
&P_n(x)=-\sum_{j=0}^{\lfloor(n-2) / 2\rfloor}(-1)^j \frac{(n-2-j)!(n-j-1)!}{j!(j+1)!(n-2-2 j)!}(2 x)^{n-2-2 j},\\
&Q_n(x)=\sum_{j=0}^{\lfloor(n-1) / 2\rfloor}(-1)^j \frac{((n-1-j)!)^2}{(j!)^2(n-1-2 j)!}(2 x)^{n-1-2 j}.
\end{aligned}
\end{equation*}
Rearranging $P_n$ and $Q_n$ in powers of $\tfrac{1}{r}$ yields
\begin{equation*}
P_n(\tfrac{1}{k_{\rm t} r})  =\sum_{m=0}^{n} p_{n, m} (\tfrac{1}{k_{\rm t}r})^m, \quad Q_n(\tfrac{1}{k_{\rm t} r})  =\sum_{m=0}^{n} q_{n, m} (\tfrac{1}{k_{\rm t}r})^m .
\end{equation*}
A combination of \eqref{phit} and \eqref{HN} leads to
\begin{equation*}
\phi_{\rm t}(r, \theta)=H_0^{(1)}(k_{\rm t} r) \sum_{l=0}^{\infty} \frac{h_{{\rm t},l}(\theta)}{(k_{\rm t} r)^l}+H_1^{(1)}(k_{\rm t} r) \sum_{l=0}^{\infty} \frac{q_{{\rm t},l}(\theta)}{(k_{\rm t} r)^l},
\end{equation*}
where
\begin{equation*}
h_{{\rm t},l}= \sum_{n=0}^{\infty}c_{{\rm t},n}(\theta) p_{n, l},\quad q_{{\rm t},l}= \sum_{n=0}^{\infty}c_{{\rm t},n}(\theta) q_{n, l}.
\end{equation*}
In fact, given $h_{{\rm t},0}$ and $q_{{\rm t},0}$, the higher-order coefficients $h_{{\rm t},n}$ and $q_{{\rm t},n}$ can be determined recursively by requiring the net coefficients of $H_0^{(1)}$ and $H_1^{(1)}$ to vanish identically.

Using the formula 
\begin{equation*}
\tfrac{d H_n^{(1)}(z)}{d z}=\tfrac{n}{z} H_n^{(1)}(z)- H_{n+1}^{(1)}(z),
\end{equation*}
we have
\begin{equation}
\label{uR1}
\bm u=\sum_{n=0}^{\infty}[\bm c_n^{\rm p}(\theta)H_n^{(1)}(k_{\rm p}r)+\bm c_n^{\rm s}(\theta)H_n^{(1)}(k_{\rm s}r)],
\end{equation}
where
\begin{equation*}
\begin{aligned}
&\bm c_n^{\rm p}(\theta)=[\tfrac{nb_{{\rm p},n}}{r}\rm{e}^{\rm{i} n \theta}-k_{\rm p}b_{{\rm p},n-1}\rm{e}^{\rm{i}( n-1) \theta}+(-1)^{n+1}\tfrac{nb_{{\rm p},-n}}{r}\rm{e}^{-\rm{i} n \theta}\\
&+(-1)^{n+1}k_{\rm p}b_{{\rm p},1-n}\rm{e}^{\rm{i} (1-n) \theta}]\bm e_r+[\tfrac{{\rm i}nb_{{\rm p},n}}{r}\rm{e}^{\rm{i} n \theta}+(-1)^{n+1}\tfrac{{\rm i}nb_{{\rm p},-n}}{r}\rm{e}^{-\rm{i} n \theta}]\bm e_\theta,\\
&\bm c_n^{\rm s}(\theta)=[\tfrac{{\rm i}nb_{{\rm s},n}}{r}\rm{e}^{\rm{i} n \theta}+(-1)^{n+1}\tfrac{{\rm i}nb_{{\rm s},-n}}{r}\rm{e}^{-\rm{i} n \theta}]\bm e_r-[\tfrac{nb_{{\rm s},n}}{r}\rm{e}^{\rm{i} n \theta}\\
&-k_{\rm s}b_{{\rm s},n-1}\rm{e}^{\rm{i} (n-1) \theta}-(-1)^{n}\tfrac{nb_{{\rm s},-n}}{r}\rm{e}^{-\rm{i} n \theta}-(-1)^{n}k_{\rm s}b_{{\rm s},1-n}\rm{e}^{\rm{i} (1-n) \theta}]\bm e_\theta.
\end{aligned}
\end{equation*}
Combining \eqref{uR1} and \eqref{HN} gives
\begin{equation*}
\begin{aligned}
\bm u(r, \theta)&=H_0^{(1)}(k_{\rm p} r) \sum_{l=0}^{\infty} \tfrac{\bm F_{{\rm p},l}(\theta)}{(k_{\rm p} r)^l}+H_1^{(1)}(k_{\rm p} r) \sum_{l=0}^{\infty} \tfrac{\bm G_{{\rm p},l}(\theta)}{(k_{\rm p} r)^l}\\
&+H_0^{(1)}(k_{\rm s} r) \sum_{l=0}^{\infty} \tfrac{\bm F_{{\rm s},l}(\theta)}{(k_{\rm s} r)^l}+H_1^{(1)}(k_{\rm s} r) \sum_{l=0}^{\infty} \tfrac{\bm G_{{\rm s},l}(\theta)}{(k_{\rm s} r)^l},
\end{aligned}
\end{equation*}
where
\begin{equation*}
\bm F_{{\rm t},l}= \sum_{n=0}^{\infty}c_n^{{\rm t}}(\theta) p_{n, l},\quad \bm G_{{\rm t},l}= \sum_{n=0}^{\infty}c_n^{{\rm t}}(\theta) q_{n, l},\quad \rm t=p,s.
\end{equation*}
For further details on the convergence and differentiability of these series, we refer the reader to \cite{K61,CL06}. This completes the proof.

\bibliography{references}
\end{document}